\documentclass[letterpaper]{article}
\usepackage{graphicx}
\usepackage{amsmath,amsthm}
\usepackage{amssymb}
\usepackage{float}
\usepackage[english]{babel}

%
\usepackage{graphicx}
\usepackage{caption}
\usepackage{subcaption}
\usepackage[linesnumbered]{algorithm2e}

\makeatletter

\newskip\@bigflushglue \@bigflushglue = -100pt plus 1fil

\def\bigcentering{\let\\\@centercr\rightskip\@bigflushglue%
\leftskip\@bigflushglue
\parindent\z@\parfillskip\z@skip}

\makeatother

\usepackage{tikz}
\usetikzlibrary{patterns,positioning,arrows,decorations.markings,calc,decorations.pathmorphing,decorations.pathreplacing}


\definecolor{rouge}{RGB}{255,77,77}
\definecolor{vert}{RGB}{0,178,102}
\definecolor{jaune}{RGB}{255,255,0}
\definecolor{violet}{RGB}{208,32,144}
\definecolor{orange}{RGB}{255,140,0}
\definecolor{bleu}{RGB}{0,0,205}

\newcommand{\lettre}[3]{
\draw[fill=#3] (#1,#2) rectangle (#1+1,#2+1);
}
\newcommand{\brouge}{\vbox to 7pt{\hbox to 13pt{
\begin{tikzpicture}[scale=0.6]
\draw [fill=rouge] (5,1) rectangle (5.5,1.5);
\end{tikzpicture}
}}}

\newcommand{\bblanc}{\vbox to 7pt{\hbox to 13pt{
\begin{tikzpicture}[scale=0.6]
\draw (5,1) rectangle (5.5,1.5);
\end{tikzpicture}
}}}

\newcommand{\bnoir}{\vbox to 7pt{\hbox to 13pt{
\begin{tikzpicture}[scale=0.6]
\draw [fill=black] (5,1) rectangle (5.5,1.5);
\end{tikzpicture}
}}}

\newcommand{\bBR}{\vbox to 12pt{\hbox to 13pt{
\begin{tikzpicture}[scale=0.6]
\draw (5,1) rectangle (5.5,1.5);
\draw[fill=rouge] (5,0.5) rectangle (5.5,1);
\end{tikzpicture}
}}}

\newcommand{\bNR}{\vbox to 12pt{\hbox to 13pt{
\begin{tikzpicture}[scale=0.6]
\draw[fill=black] (5,1) rectangle (5.5,1.5);
\draw[fill=rouge] (5,0.5) rectangle (5.5,1);
\end{tikzpicture}
}}}

\newcommand{\bRB}{\vbox to 12pt{\hbox to 13pt{
\begin{tikzpicture}[scale=0.6]
\draw[fill=rouge] (5,1) rectangle (5.5,1.5);
\draw (5,0.5) rectangle (5.5,1);
\end{tikzpicture}
}}}

\newcommand{\bRN}{\vbox to 12pt{\hbox to 13pt{
\begin{tikzpicture}[scale=0.6]
\draw[fill=rouge] (5,1) rectangle (5.5,1.5);
\draw[fill=black] (5,0.5) rectangle (5.5,1);
\end{tikzpicture}
}}}

\newcommand{\tuile}{
\begin{tikzpicture}[scale=0.4]

\draw (0,0) -- (0,2) -- (0,2) to [controls=+(45:1.5) and +(135:1.5)] (4,2) -- (4,0) to [controls=+(135:0.75) and +(45:0.75)] (2,0) to [controls=+(135:0.75) and +(45:0.75)] (0,0) -- cycle ;
\draw[fill = white] (0,0) circle (0.3);
\draw (0,0) node {{\tiny $1$}};
\draw[fill = white] (0,2) circle (0.3);
\draw (0,2) node {{\tiny $0$}};
\draw[fill = white] (2,0) circle (0.3);
\draw (2,0) node {{\tiny $1$}};
\draw[fill = white] (4,0) circle (0.3);
\draw (4,0) node {{\tiny $0$}};
\draw[fill = white] (4,2) circle (0.3);
\draw (4,2) node {{\tiny $1$}};
\draw (0,2) node[above left]{\small${1_G}$};
\draw (4,2) node[above right]{\small$a$};
\draw (0,0) node[below left]{\small$b$};
\draw (2,0) node[below]{\small${ba}$};
\draw (4,0) node[below right]{\small${ab}{ = }{ba^2}$};
\end{tikzpicture}
}

\def\ZZ{\mathbb{Z}}
\def\FF{\mathbb{F}}
\def\NN{\mathbb{N}}

\def\CC{\mathcal{C}}
\def\OO{\mathcal{O}}

\def\ag{\mathcal{A}}
\def\bg{\mathcal{B}}

\def\FF{{\mathcal{F}}}
\def\WP{{\texttt{WP}}}

\newcommand{\define}[1]{\emph{#1}}

\theoremstyle{plain}

\newtheorem*{theorem*}{Theorem}
\newtheorem*{claim*}{Claim}
\newtheorem{theorem}{Theorem}[section]
\newtheorem{lemma}[theorem]{Lemma}
\newtheorem{proposition}[theorem]{Proposition}
\newtheorem{corollary}[theorem]{Corollary}

\newtheorem*{questions*}{Questions}

\newenvironment{definition}[1][Definition]{\begin{trivlist}
\item[\hskip \labelsep {\bfseries #1}]}{\end{trivlist}}
\newenvironment{example}[1][Example]{\begin{trivlist}
\item[\hskip \labelsep {\bfseries #1}]}{\end{trivlist}}

\title{A notion of effectiveness for subshifts on finitely generated groups.}
\date{}

\author{Nathalie Aubrun\thanks{LIP, ENS de Lyon -- CNRS -- INRIA -- UCBL -- Universit\'e de Lyon}~, Sebasti\'an Barbieri\footnotemark[1]\\ and Mathieu Sablik\thanks{Aix-Marseille Universit\'e, CNRS, Centrale Marseille, I2M UMR 7373 }~\\
\texttt{nathalie.aubrun@ens-lyon.fr}\\ \texttt{sebastian.barbieri@ens-lyon.fr}\\ \texttt{mathieu.sablik@univ-amu.fr}}

\begin{document}

\maketitle

\begin{abstract}
We generalize the classical definition of effectively closed subshift to finitely generated groups. We study classical stability properties of this class and then extend this notion by allowing the usage of an oracle to the word problem of a group. This new class of subshifts forms a conjugacy class that contains all sofic subshifts. Motivated by the question of whether there exists a group where the class of sofic subshifts coincides with that of effective subshifts, we show that the inclusion is strict for several groups, including recursively presented groups with undecidable word problem, amenable groups and groups with more than two ends. We also provide an extended model of Turing machine which uses the group itself as a tape and characterizes our extended notion of effectiveness. As applications of these machines we prove that the origin constrained domino problem is undecidable for any group of the form $G \times \ZZ$ subject to a technical condition on $G$ and we present a simulation theorem which 
is 
valid in any finitely generated group.
\end{abstract}

\section*{Introduction}
\let\thefootnote\relax\footnote{{\bf Keywords:} 37B10 Symbolic dynamics, 03D10 Turing machines, 
20F10 Word problems.}

Symbolic dynamics were originally defined on $\ZZ$ in the highly influential article of Morse and Hedlund~\cite{MorseHedlund1938} in order to study discretization of dynamical systems. The main object in this theory is the subshift, that is, a set of colorings of a group by a finite alphabet which is defined by a set of forbidden patterns. In the case of the group $\ZZ^d $ with $d\geq 2$, it turns out that subshifts enjoy interesting computational properties, among which is the undecidability of the domino problem~\cite{Berger1966,Robinson1971}. Said otherwise, there is no general algorithm deciding if there exists a coloring which avoids a finite set of patterns.
This problem can be naturally generalized to any finitely generated group, nevertheless no characterization of the groups where the domino problem is undecidable is yet known, even if some partial results have arisen~\cite{Robinson1978,AubrunKari2013,BallierStein2013}. 


More recently, the use of computability theory has become essential in the study of subshifts of finite type (SFT), those defined by a finite set of forbidden patterns. For example, in $\ZZ^d$ for $d \geq 2$ the possible entropies of SFTs are characterized as right recursively enumerable numbers~\cite{HochmanMeyerovitch2010}. This type of results comes from the possibility to encode Turing machines inside $\ZZ^d$-SFTs. The study of such results led to introduce the class of effectively closed $\ZZ^d$-subshifts, defined by a recursively enumerable set of forbidden patterns. This class was introduced by Hochman~\cite{Hochman2009b} who showed that they admit an almost trivial isometric extension which is a subaction of a $\ZZ^{d+2}$-SFT. The construction was improved with two different techniques~\cite{AubrunSablik2010,DBLP:conf/birthday/DurandRS10} to get a realization in sofic $\ZZ^{d+1}$-subshifts as projective subdynamics. Thus with an increase of one of the dimension, effectively closed $\ZZ^d$-subshifts 
are very close to sofic subshifts. Hochman's result suggests that if we play with the structure on which subshifts are defined, some strong links between sofic and effectively closed subshifts may emerge. 

In this direction we investigate subshifts defined on infinite finitely generated groups and define a generalized notion of effectiveness. The difficulty for this task relies on the possibility, even for a finitely presented group, to have an undecidable word problem~\cite{novikov1955,Boone1958} -- no algorithm can decide whether a word on the generators and their inverses represents the identity element. We study the restrictions of this class with respect to this problem and define an extended definition of effectiveness by allowing the usage of oracles to the word problem of the group. 


The paper is organized as follows. Section~\ref{section.generalities} presents notations and basic notions from group theory and symbolic dynamics on finitely generated groups. In Section~\ref{section.effectiveness} we introduce a general model for effectively closed subshifts based on pattern codings and study its properties. We show that this class can be defined either by recursively enumerable or decidable sets of pattern codings, that it contains all subshifts of finite type and that it is stable under finite intersections. We also show that under the assumption that the underlying group is recursively presented this class can be defined using a maximal sets of pattern codings, it is stable under factors, finite unions and projective subdynamics. Therefore showing that this class contains all sofic subshifts and that the property of being effectively closed is a conjugacy invariant. In order to express the limitations of this class even when the group is recursively presented we introduce in 
Subsection~\ref{subsection_one_ore_less} the one-or-less-subshift $X_{\leq 1}$ which has the property of being effectively closed in recursively presented groups if and only if the word problem is decidable. This example, besides illustrating the limitations of the notion of effectively closed subshifts, answers an open question posed by Dahmani and Yaman~\cite{DahmaniYaman2008, DahmaniPC,YamanPC}. In Subsection~\ref{subsection.G_effectiveness} we briefly introduce $G$-effectively closed subshifts --subshifts which are defined by Turing machines with access to an oracle of the word problem of the group-- and list its properties. We also show that while this is a good theoretical frame in many aspects, it does not behave well with respect to projective subactions. We end Section~\ref{section.effectiveness} by studying the following question: Is there a group $G$ where the class of effectively closed subshifts coincides with the class of sofic subshifts? This question is motivated by the novel work in~\cite{
aubrun_sablik_2016} where they show that this property is held for structures resembling subshifts defined in shears of the Baumslag-Solitar group $BS(1,2)$ under the assumption of a technical property. While their result is certainly quite specific, it raises the previous question in a natural way. We give a negative answer to that question for three classes of groups, namely:

\begin{itemize}
\item recursively presented groups with undecidable word problem -- Theorem~\ref{theorem.one_or_less_non_sofic},
\item infinite amenable groups -- Theorem~\ref{theorem.amenable_sym_non_sofic},
\item groups which have two or more ends -- Theorem~\ref{theorem.more_two_ends_stricly_sofic}.
\end{itemize}

In Section~\ref{section.G_machines} we introduce an abstract model of Turing machine which instead of a bi-infinite tape uses a group. These machines are quite similar to Turing machines except that they move using a finite set of generators of $G$ and work over patterns instead of words. This object allows us to define recursively enumerable and decidable sets of patterns and gives a way to construct explicitly Turing machines with oracles. In Theorems~\ref{theorem_g_effective_is_oracle} and~\ref{theorem_oracle_is_G_effective} we make this relationship explicit with the aims of concluding in Corollary~\ref{the_great_corollary} that these machines give an alternative definition of $G$-effectively closed subshifts by $G$-machines. We end Section~\ref{section.G_machines} by giving two applications of these objects: In Theorem~\ref{theorem_group_undecidable_DP} we show that if a group $G$ satisfies that $X_{\leq 1}$ is sofic then the origin constrained domino problem of $G \times \ZZ$ is undecidable. We also 
show that this implies that the domino problem for $(G \times \ZZ)\ast H$ is undecidable for any non-trivial finite group $H$. In Theorem~\ref{Teorema_simulacion} we show that for every infinite and finitely generated group $G$ there exists a universal subshift $U$ defined over $G \times \ZZ$ such that the product of $U$ with a $G \times \ZZ$-full shift can be restricted by a finite amount of forbidden patterns and a factor code to obtain any $G$-effectively closed subshift as a projective subdynamics.

\section{Preliminaries and notation}
\label{section.generalities}

We assume from the reader basic knowledge about group theory and group presentations, a good reference is \cite{ceccherini-SilbersteinC09}. For a group $G$ we denote by $1_G$ its identity element. In this article we consider only finitely generated groups, and we denote by $S \subset G$ an arbitrary finite set of generators which is closed by inverses and contains the identity. If two words $w_1,w_2$ in $S^*$ represent the same element in $G$ we write $w_1 =_G w_2$. The undirected right \define{Cayley graph} of $G$ given by $S$, denoted by $\Gamma(G,S)$, is a vertex transitive graph whose vertices are elements of $G$ and $\{g,h\} \in \binom{G}{2}$ form an edge if there is $s \in S$ such that $gs = h$. For $g \in G$ we denote $|g|$ the length of the shortest path from $1_G$ to $g$ in $\Gamma(G,S)$. We also denote the \define{ball of size $n \geq 0$} in $\Gamma(G,S)$ as $B_n = \{g \in G \mid |g| \leq n \}$.
Naturally, the definitions above depend on the choice of generating set $S$, nevertheless all the metrics generated by the distances in such a Cayley graph are equivalent.


The \define{word problem} of $G$ is defined as the formal language:
$$\WP(G)=\left\{ w\in S^* \mid w=_G 1_G\right\}.$$
It can be shown that the decidability of the word problem is independent of the choice of generating set $S$, thus the notation $\WP(G)$ is appropriate. A fundamental result of Novikov~\cite{novikov1955} and Boone~\cite{Boone1958} exhibits finitely presented groups with undecidable word problem.



Let $\ag$ be a finite alphabet. We say that the set $\ag^G = \{ x: G \to \ag\}$ equipped with the left group action $\sigma: G \times \ag^G \to \ag^G$ such that $(\sigma_g(x))_h = x_{g^{-1}h}$ is a \textit{full shift}. The elements $a \in \ag$ and $x \in \ag^G$ are called \define{symbols} and \define{configurations} respectively. With the discrete topology on $\ag$, the product topology in $\ag^G$ is compact. This topology is generated by a clopen subbasis given by the \define{cylinders} $[a]_g = \{x \in \ag^G | x_g = a\in \ag\}$. Since $G$ is countable, $\ag^G$ is metrizable and an ultrametric which generates the product topology is given by $\displaystyle{d(x,y) = 2^{-\inf\{|g|\; \mid\; g \in G:\; x_g \neq y_g\}}}.$ A \emph{support} is a finite subset $F \subset G$. A \emph{pattern with support $F$} is an element $p$ of $\ag^F$. We denote the set of finite patterns by $\ag_G^* := \bigcup_{F \subset G, |F| < \infty}{\ag^F}$. For $p \in \ag^F$ and $g \in G$ the \define{cylinder generated by $p$ on $g$} is $[
p]_g := \bigcap_{h \in F}[p_h]_{gh}$. 

\begin{definition}
A subset $X$ of $\ag^G$ is a \define{subshift} if it is $\sigma$-invariant -- $\sigma(X)\subset X$ -- and closed for the cylinder topology. Equivalently, $X$ is a subshift if there exists $\FF \subset \ag_G^*$ such that: $$ X = X_{\FF} =: \bigcap_{p \in \FF, g \in G} \ag^G \setminus [p]_{g}.$$\end{definition}


Let $X,Y$ be two subshifts over alphabets $\ag_X,\ag_Y$. We call a continuous $G$-equivariant -- i.e. $\sigma$-commuting -- function $\phi: X \to Y$ a \define{morphism}. A famous theorem by Curtis, Lyndon and Hedlund -- see for example  ~\cite{ceccherini-SilbersteinC09} -- gives a combinatorial characterization of morphisms as block codes: namely, $\phi$ is a morphism if and only if there exists a finite $F \subset G$ and a local function $\Phi: \ag_X^F \to \ag_Y$ such that $\phi(x)_g := \Phi(\sigma_{g^{-1}}(x)|_{F})$. We say $\phi$ is a \define{factor} if $\phi$ is surjective, and a \define{conjugacy}  if it is bijective. Whenever there is a factor code $\phi : X \to Y$ we write $X \twoheadrightarrow Y$ and say that $Y$ is a \textit{factor} of $X$ and that $X$ is an \textit{extension} of $Y$. Furthermore, if $\phi$ is a conjugacy we will write $X \simeq Y$ and say they are \textit{conjugated}. The conjugacy is an equivalence relation which preserves most of the topological dynamics of a system.

%
%
%

We say that a subshift $X \subset \ag^G$ is of \define{finite type} -- SFT for short -- if it can be defined by a finite set of forbidden patterns, that is, $|\FF| < \infty$ and $X = X_{\FF}$. We say that $X$ is \define{sofic} if there exists an SFT $Y$ and a factor code $\phi: Y \twoheadrightarrow X$. The class of sofic subshifts is the smallest class closed under factor codes that contains every SFT. Both classes are conjugacy invariants, that is, the property of belonging to them is preserved under conjugacy.

\section{Effectiveness on finitely generated groups}
\label{section.effectiveness}
When $G = \ZZ$, patterns can be identified as words over a finite alphabet. We say a subshift $X \subset \ag^{\ZZ}$ is \define{effectively closed} if there is a recursively enumerable set of forbidden words that defines it. We intend to generalize this definition to the class of finitely generated groups. On~$\ZZ^d$, a finite pattern is no longer a word, but it can be easily coded as a word -- via any recursive bijection between $\ZZ^d$ and $\ZZ$. Then effective $\ZZ^d$-subshifts correspond to subshifts which can be defined by a set of forbidden patterns that admits a recognizable set of codings. In groups with undecidable word problem this recursive bijection does not exist.

In this section we first take the previous ideas of codings to the context of finitely generated groups by introducing the formalism of pattern codings and explore the limitations of this concept when the word problem of the group is not decidable or recursively enumerable. At this point we introduce the subshift $X_{\leq 1}$ which consists in all configurations containing at most one appearance of a non-zero symbol, and use it to exemplify these previous constraints. Next we extend the notion of effectiveness by adding the power of an oracle to $\WP(G)$. We remark the stability properties for this extended class and compare them with sofic subshifts and SFTs. Finally we exhibit three big classes of groups where this class does not coincide with the one of sofic subshifts.

\subsection{Classical effectiveness}
\label{subsection.Z_effectiveness}

Let $G$ be a finitely generated group and $\ag$ an alphabet. A \define{pattern coding} $c$ is a finite set of tuples $c=(w_i,a_i)_{i \in I}$ where $w_i \in S^{*}$ and $a_i \in \ag$. We say that a pattern coding is \define{consistent} if for every pair of tuples such that $w_i =_G w_j$ then $a_i = a_j$. For a consistent pattern coding $c$ we define the pattern $p(c) \in \ag^F$ where $F= \bigcup_{i \in I}w_i$ and $p(c)_{w_i}=a_i$.

\begin{example}
 Let $BS(1,2) \cong \langle a,b\mid ab=ba^2\rangle$ be a Baumslag-Solitar group and $\ag=\{0,1\}$. Then the pattern coding
 $$
 \begin{array}{ccccc}
 (\epsilon,0) & ~ & (b,1) & ~ & (a,1)\\
 (ab,0) & & (ba^2,0)& & (ba,1)\\
 \end{array}
 $$
 is consistent, since all the words above on $S = \{a,b,a^{-1},b^{-1}\}$ represent different elements in $G$ except for $ab$ and $ba^2$ that are assigned the same symbol. The pattern $p$ it defines is: 
 \begin{center}
 \tuile
 \end{center}
 But the pattern coding 
 $$
 \begin{array}{cccccccc}
 (\epsilon,0) & ~ & (a^2,1) & ~ & (bab^{-1}a, 1) \\
 (a,1) & & (ba,1) & & (abab^{-1},0) \\
 \end{array}
 $$
 is inconsistent since words $abab^{-1}$ and $bab^{-1}a$ represent the same element in $G$ but are assigned different symbols.
\end{example}

A set of pattern codings $\CC$ is said to be recursively enumerable if there is a Turing machine which takes as input a pattern coding $c$ and accepts it if and only if $c \in \CC$.

\begin{definition}\label{definition_effectively_closed}
	A subshift $X \subset \ag^G$ is \define{effectively closed} if there is a recursively enumerable set of pattern codings $\CC$ such that:$$ X = X_{\CC} := \bigcap_{g \in G, c \in \CC} \left( \ag^G \setminus \bigcap_{(w,a) \in c}[a]_{gw} \right).$$
\end{definition}

The specific choice of the set of generators $S$ is irrelevant as one can easily translate one in terms of the other. Notice that if a pattern coding is inconsistent then $\bigcap_{(w,a) \in c}[a]_w = \emptyset$ and that if it is consistent then $\bigcap_{(w,a) \in c}[a]_w = [p(c)]_{1_G}$. Therefore, the subshift defined by a set of pattern codings $\CC$ only depends on the set of consistent ones, in the sense that if $p(\CC)$ is the set of patterns defined by the consistent pattern codings of $\CC$ then $X_{\CC} = X_{p(\CC)}$.

We could also define this class by the existence of a decidable family rather than a recursively enumerable one. This justifies the usage of the word ``effectively''. The following proposition is commonly known to hold true in $G = \ZZ^d$. Here we present a general version which works in every finitely generated group.

\begin{proposition}
	\label{proposition.F_decidable_Z_decidable}
	Let $X \subset \ag^G$ be an effectively closed subshift. Then there exists a decidable set of pattern codings $\CC$ such that $X = X_{\CC}$.
\end{proposition}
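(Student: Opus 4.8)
The plan is to replace the recursively enumerable set $\CC$ by a decidable set $\CC'$ that forbids exactly the same configurations, following the classical ``time-bounding'' trick used on $\ZZ^d$, but carried out entirely syntactically so that the word problem of $G$ is never invoked. For pattern codings I will use the \emph{syntactic} subset relation: write $c \subseteq c'$ when the set of tuples of $c$ is contained in the set of tuples of $c'$. The essential observation is that if $c \subseteq c'$, then for every $g \in G$ one has $\bigcap_{(w,a)\in c'}[a]_{gw} \subseteq \bigcap_{(w,a)\in c}[a]_{gw}$; hence any configuration lying in the cylinder forbidden by $c'$ at $g$ also lies in the one forbidden by $c$ at $g$. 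This comparison requires no decision about whether two words represent the same group element.

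Let $R$ be a Turing machine recognizing $\CC$ (which exists by hypothesis), and for a pattern coding $c'$ let $N(c')$ denote its number of tuples. I define
$$\CC' := \left\{ c' \;\middle|\; \exists\, c \subseteq c' \text{ such that } R \text{ accepts } c \text{ within } N(c') \text{ steps} \right\}.$$
I first check that $\CC'$ is decidable: given $c'$, the tuple set of $c'$ is finite, so it has finitely many subsets $c$; for each of them I run $R$ for the bounded number $N(c')$ of steps and accept $c'$ if and only if at least one run accepts. This is a finite, effective procedure, and note that it performs only subset checks and simulations of $R$ -- at no point does it need to decide whether two words of $S^{*}$ are equal in $G$.

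It then remains to prove $X_{\CC'} = X_{\CC}$. For the inclusion $X_{\CC} \subseteq X_{\CC'}$, observe that every $c' \in \CC'$ contains some $c \in \CC$ as a syntactic subset; by the observation above, a configuration that avoids $c$ everywhere also avoids $c'$ everywhere, so any $x \in X_{\CC}$ lies in $X_{\CC'}$. The reverse inclusion is where the time-bounding does its work, and is the step I expect to be the most delicate. Suppose $x \notin X_{\CC}$, so there exist $g \in G$ and $c \in \CC$ with $x_{gw} = a$ for every $(w,a) \in c$; let $t$ be the number of steps $R$ takes to accept $c$. I then pad $c$ into a larger coding $c'$ by adjoining tuples $(v, x_{gv})$ for syntactically fresh words $v \in S^{*}$ of increasing length, enough of them that $N(c') \geq t$; this is possible because $S^{*}$ is infinite. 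By construction $c \subseteq c'$ and $R$ accepts $c$ within $t \leq N(c')$ steps, so $c' \in \CC'$, while $x$ still realizes $c'$ at $g$ since every adjoined tuple was read off from $x$. Hence $x \notin X_{\CC'}$, giving $X_{\CC'} \subseteq X_{\CC}$ and completing the proof. The only subtle points to verify carefully are that the padding genuinely increases $N(c')$ without bound -- handled by choosing distinct words -- and that the resulting $c'$ is consistent, which holds automatically because all its symbols are values of the single configuration $x$.
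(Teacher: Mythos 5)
Your proof is correct, and it takes a genuinely different route from the paper's, although both rest on the same key monotonicity observation (adjoining tuples to a pattern coding shrinks the associated cylinder, so syntactic enlargement never forbids more configurations than the original coding did). The paper fixes a recursive enumeration $c_0,c_1,\dots$ of the given recursively enumerable set, sets $L_n=\max_{k\leq n}|c_k|$ with $|c|$ the maximal word length, and replaces each $c_n$ by \emph{all} of its completions to codings whose word set is exactly the set of words of length at most $L_n$; the resulting union is decided by scanning the enumeration until $L_n$ exceeds the input's length, and termination of that scan needs the auxiliary facts that a finite set is handled separately and that for an infinite set the lengths $L_n$ must tend to infinity. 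Your construction avoids the enumeration entirely: you define the decidable set in one shot as the codings $c'$ having a syntactic subset accepted by the recognizer $R$ within $N(c')$ steps, where $N(c')$ is the number of tuples, so decidability is immediate (a finite subset search plus clocked simulations) with no case split and no appeal to growth of word lengths; on the correctness side you pad with arbitrary fresh words whose symbols are read off the offending configuration, rather than with full word-balls. What your version buys is uniformity and a trivial termination argument, making the proof more self-contained; what the paper's version buys is a decidable set with a more structured form (codings supported on complete balls of words), which is closer in spirit to the ball-completion constructions used elsewhere in the paper, e.g.\ in Lemma~\ref{lemma.F_maximal_Z_effective}. One small point worth noting, which you flag yourself: the consistency of the padded coding is indeed automatic since its symbols are values of a single configuration, but it is also irrelevant, because inconsistent codings define empty cylinders and hence never exclude anything.
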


\begin{proof}
	Let $\CC'$ a recursively enumerable set of pattern codings such that $X = X_{\CC'}$. If $\CC'$ is finite the result is trivial. Otherwise there exists a recursive enumeration $\CC' = \{c_0,c_1,\dots\}$. For a pattern coding $c$ we define its length as $|c| = \max_{(w,a) \in c}{|w|}$. For $n \in \NN$ let $L_n = \max_{k \leq n}|c_k|$ and define $\CC_n$ as the finite set of all pattern codings $c$ which satisfy the following properties:
	\begin{itemize}
		\item Every $w \in S^*$ with $|w|\leq L_n$ appears in exactly one pair in $c$.
		\item $(w,a) \in c$ implies that $|w|\leq L_n$.
		\item If $(w,a) \in c_n$ then $(w,a) \in c$.
	\end{itemize}
	That is, $\CC_n$ is the set of all pattern codings which are completions of $c_n$ up to every word of length at most $L_n$ in every possible way. Consider $\CC = \bigcup_{n \in \NN}\CC_n$. Clearly it satisfies that $X = X_{\CC}$. We claim it is decidable.
	
	Consider the algorithm which does the following on input $c$: It initializes $n$ to $0$. Then it enters into the following loop: First it produces the pattern coding $c_n$. If $L_n > |c|$ it rejects the input. Otherwise it calculates the set $\CC_n$. If $c \in \CC_n$ then it accepts, otherwise it increases the value of $n$ by $1$. 
	
	As $L_n$ is increasing and cannot stay in the same value indefinitely this algorithm eventually ends for every input. \end{proof}

In what follows we will show which are the liberties one can take when choosing a defining set of pattern codings and the structural properties of this class. Some of these are related to the following notion in group theory. A group is said to be \define{recursively presented} if there is a presentation $G \cong \langle S,R \rangle$ where $S$ is a recursive set and $R \subset S^*$ a recursively enumerable language. As we only consider finitely generated groups $S$ is always finite and thus recursive, so we take the second requirement as the definition.

\begin{proposition}\label{dumbpropositionequiv3zeffective}
Let $G$ be a finitely generated group and $\ag$ be an alphabet with at least two symbols. The following are equivalent:

\begin{enumerate}
\item $G$ is recursively presented.
\item $\WP(G)$ is recursively enumerable.
\item The set of inconsistent pattern codings is recursively enumerable.
\end{enumerate}

\end{proposition}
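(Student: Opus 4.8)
The plan is to verify the three-way equivalence by splitting it into two largely independent parts: the classical group-theoretic equivalence $(1) \Leftrightarrow (2)$, and the pair of computable reductions realizing $(2) \Leftrightarrow (3)$, which tie the word problem to inconsistency of pattern codings. The hypothesis $|\ag| \geq 2$ will be needed only in the step $(3) \Rightarrow (2)$.

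For $(1) \Leftrightarrow (2)$ I would use the description of the word problem as a normal closure. Viewing words of $S^*$ through the canonical surjection $F(S) \twoheadrightarrow G$ from the free group on $S$, one has $w =_G 1_G$ if and only if $w$ lies in the normal closure $\langle\langle R \rangle\rangle$ of the relators. For $(1) \Rightarrow (2)$, assuming $R$ recursively enumerable, I would dovetail an enumeration of all finite products of conjugates $u r^{\pm 1} u^{-1}$, with $r$ drawn from an enumeration of $R$ and $u \in S^*$, freely reduce each candidate, and accept $w$ as soon as some candidate coincides with the free reduction of $w$; this semidecides $\WP(G)$. For $(2) \Rightarrow (1)$ I would take $R := \WP(G)$: since the image of $\WP(G)$ in $F(S)$ is already the kernel of $F(S) \twoheadrightarrow G$, the presentation $\langle S \mid \WP(G) \rangle$ returns $G$, and it is recursive because $S$ is finite and $R$ is recursively enumerable by assumption.

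For $(2) \Rightarrow (3)$ I would observe that a pattern coding $c = (w_i,a_i)_{i \in I}$ is inconsistent precisely when there is a pair of indices $i,j$ with $a_i \neq a_j$ and $w_i w_j^{-1} \in \WP(G)$, where $w_j^{-1} \in S^*$ because $S$ is closed under inverses. Given a semidecision procedure for $\WP(G)$, I would dovetail over the finitely many pairs $(i,j)$ with $a_i \neq a_j$ and accept $c$ once one of them certifies $w_i w_j^{-1} =_G 1_G$; this semidecides the inconsistent codings. For $(3) \Rightarrow (2)$ I would fix two distinct symbols $0 \neq 1$ of $\ag$ and map each $w \in S^*$ to the pattern coding $c_w := \{(\epsilon,0),(w,1)\}$. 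This coding is inconsistent if and only if $\epsilon =_G w$, that is $w =_G 1_G$; since $w \mapsto c_w$ is computable, running the assumed semidecider for inconsistency on $c_w$ semidecides $\WP(G)$.

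I expect the only genuinely delicate point to be $(1) \Rightarrow (2)$, which rests on the nontrivial classical fact that a recursively presented group has semidecidable word problem; the care there is to enumerate conjugates of relators and to compare after free reduction, rather than attempting to decide membership directly. The remaining steps are elementary many-one reductions, the one noteworthy feature being that $(3) \Rightarrow (2)$ genuinely requires the alphabet to contain at least two symbols, since a single word-problem instance is encoded as the clash between two distinct colors at the positions $\epsilon$ and $w$.
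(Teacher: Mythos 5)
Your proposal is correct and takes essentially the same route as the paper: the two reductions between (2) and (3) are exactly the paper's own, namely dovetailing the word-problem semidecider on $w_iw_j^{-1}$ over the pairs with $a_i \neq a_j$, and conversely encoding $w \in S^*$ as the coding $\{(\epsilon,a),(w,b)\}$ with $a \neq b$. The only difference is that you spell out the classical normal-closure argument for $(1) \Leftrightarrow (2)$, which the paper dismisses as trivial; your expansion of that step is sound.
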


\begin{proof}
The equivalence between the two first statements is trivial. Let $G$ have recursively enumerable word problem. As $u =_G v \Leftrightarrow uv^{-1} =_G 1_G$ the set of inconsistent pattern codings is recursively enumerable. Indeed, for $n\in \NN$, a Turing machine on entry $c$ can simulate iteratively for $n$ steps the machine recognizing $\WP(G)$ applied to $uv^{-1}$ for every pair $(u,a),(v,b) \in c$ with $a \neq b \in \ag$ and accept if this procedure accepts for some $n$. Conversely, given $w\in S^*$, it suffices to give as input to the machine recognizing the inconsistency of the pattern codings $c = \{(\epsilon,a),(w,b)\}$ with $a \neq b \in \ag$ in order to recognize if $w =_G 1_G$.
\end{proof}

\begin{lemma}
\label{lemma.F_maximal_Z_effective}
Let $X \subset \ag$ be an effectively closed subshift. If $G$ is recursively presented then it is possible to choose $\CC$ to be a recursively enumerable and maximal -- for inclusion -- set of pattern codings such that $X = X_{\CC}$.
\end{lemma}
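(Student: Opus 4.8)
The plan is to exhibit the unique maximal defining set explicitly and then argue that it is recursively enumerable. Fix a recursively enumerable set of pattern codings $\CC'$ with $X=X_{\CC'}$, and for a pattern coding $c$ write $[c]_g:=\bigcap_{(w,a)\in c}[a]_{gw}$ for the clopen set it forbids at $g$, abbreviating $[c]:=[c]_{1_G}$. I would take as candidate
$$\CC := \{\, c \mid X \cap [c] = \emptyset \,\},$$
the set of all pattern codings whose pattern fails to occur anywhere in $X$; note this contains every inconsistent coding, since then $[c]=\emptyset$. First I would verify that $X_{\CC}=X$ and that $\CC$ is maximal. Because $X$ avoids each forbidden cylinder, every $c\in\CC'$ already satisfies $X\cap[c]=\emptyset$, so $\CC'\subseteq\CC$ and hence $X_{\CC}\subseteq X_{\CC'}=X$; conversely each $c\in\CC$ forbids only patterns absent from $X$, so $X\subseteq X_{\CC}$. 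Maximality is then immediate: if $c\notin\CC$ there is $x\in X$ with $x\in[c]$, so forbidding $c$ would delete $x$; thus no strict superset of $\CC$ defines $X$, and in fact any $\CC''$ with $X_{\CC''}=X$ satisfies $\CC''\subseteq\CC$, so $\CC$ is the unique maximal one.

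It remains to prove that $\CC$ is recursively enumerable, and here the key tool is compactness. Since $X=\bigcap_{g\in G,\,c'\in\CC'}(\ag^G\setminus[c']_g)$, its complement is $\bigcup_{g,c'}[c']_g$, so $X\cap[c]=\emptyset$ is equivalent to the inclusion $[c]\subseteq\bigcup_{g\in G,\,c'\in\CC'}[c']_g$. As $[c]$ is compact and each $[c']_g$ open, this holds if and only if there is a finite subcover $[c]\subseteq\bigcup_{j=1}^{m}[c'_j]_{g_j}$ with $c'_j\in\CC'$ and $g_j\in S^*$. Once the finitely many relevant words $W$ (those occurring in $c$ and in the shifted $c'_j$) together with the restriction of $=_G$ to $W$ are known, checking such an inclusion is a finite combinatorial task: one ranges over all assignments of symbols to the $=_G$-classes of $W$ that agree with $c$ and confirms that each meets some $c'_j$. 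The enumeration would therefore dovetail over $\CC'$, the words $g_j$, and the bound $m$, searching for a finite subcover.

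The main obstacle is that this finite check requires the equality relation on $W$, whereas under the hypothesis that $G$ is recursively presented the word problem $\WP(G)$ is only recursively enumerable (Proposition~\ref{dumbpropositionequiv3zeffective}): one can confirm that two words coincide but never that they differ. I would circumvent this by dovetailing on a confirmation parameter $N$: let $E_N$ be the equivalence relation on $W$ generated by the pairs confirmed equal within $N$ steps of a machine recognizing $\WP(G)$, interpret the inclusion in the finite world of assignments to $E_N$-classes, and accept $c$ as soon as it holds for some $N$ and some finite subcover. Soundness holds because $E_N$ records only genuine equalities, so any hypothetical $x\in X\cap[c]$ would restrict to a well-defined $E_N$-consistent assignment which, by the verified inclusion, meets some $[c'_j]_{g_j}$ with $c'_j\in\CC'$, forcing $x\in[c'_j]_{g_j}$ and contradicting $x\in X$. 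Completeness holds because, when $X\cap[c]=\emptyset$, compactness supplies a finite subcover, and since $W$ is finite all true equalities among its words are confirmed after finitely many steps; thus $E_N$ eventually agrees with $=_G$ on $W$ and the correct finite check succeeds. This dovetailed search is the desired recursive enumeration of $\CC$, which completes the proof.
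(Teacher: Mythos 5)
Your proof is correct and follows essentially the same route as the paper's: the same maximal set $\CC=\{c \mid X \cap [c] = \emptyset\}$, the same compactness extraction of a finite subcover of $[c]$ by shifted cylinders $[c'_j]_{g_j}$ with $c'_j\in\CC'$, and the same dovetailed semi-decision procedure that combines partial enumeration of $\WP(G)$ and of $\CC'$ with a finite combinatorial inclusion check performed modulo the partially confirmed equality relation. The only difference is presentational: you make the soundness direction (acceptance implies $X\cap[c]=\emptyset$) explicit via the well-definedness of the induced assignment on equivalence classes, whereas the paper simply asserts that a coding outside the maximal set is never accepted.
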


This lemma is fundamental is the rest of the article. Indeed, every time the statement of a result requires as hypothesis that a group $G$ is recursively presented, this is because its proof uses the existence of a recursively enumerable and maximal set of pattern codings for some $G$-subshift.

\begin{proof}
	A pattern coding $c$ belongs to the maximal set $\CC$ defining $X$ if and only if $X \cap \bigcap_{(w,a) \in c}[a]_{w} = \emptyset$. Let $c \in \CC$ and $\CC'$ a recursively enumerable set such that $X = X_{\CC'}$. Then: $$\bigcap_{(w,a) \in c}[a]_{w} \subset \bigcup_{c' \in \CC', g \in G }\bigcap_{(w',a') \in c'}[a']_{gw'}.$$
	By compactness we may extract a finite open cover indexed by $c'_i,g_i$ such that:
	\begin{equation}\tag{$\star$}\label{equation1}
	\bigcap_{(w,a) \in c}[a]_{w} \subset \bigcup_{i \leq n}\bigcap_{(w',a') \in c'_i}[a']_{g_iw'}
	\end{equation}
	
	Note that each of these $g_i$ can be seen as a finite word in $S^*$. Now let $T$ be the Turing machine which does iteratively for $n \in \NN$ the following:
	\begin{itemize}
		\item Runs $n$ steps the machine $T_1$ recognizing $\WP(G)$ for every word in $S^*$ of length smaller than $n$.
		\item Runs $n$ steps the machine $T_2$ recognizing $\CC'$ for every pattern coding defined on a subset of words of $S^*$ of length smaller than $n$.
		\item Let $\sim_n$ be the equivalence relation for words in $S^*$ of length smaller than $n$ such that $u \sim_n v$ if $uv^{-1}$ has been already accepted by $T_1$. Let $\CC_n$ be the pattern codings already accepted by $T_2$. If every word in $c$ has length smaller than $n$ check if the following relation is true under $\sim_n$:
		
		$$\bigcap_{(w,a) \in c}[a]_{w} \subset \bigcup_{c' \in \CC_n, |u| \leq n}\bigcap_{(w',a') \in c'}[a']_{uw'}$$
		
		If it is true, accept, otherwise increase $n$ by $1$ and continue.	\end{itemize}
	Let $m$ be the max of all $|w|$ such that $(w,a)\in c$, and $|w'|$ such that $(w',a')\in c'_i$ and all $|g_i|$. By definition, there exists an $N \in \NN$ such that every $c'_i$ for $i \leq n$ is accepted and every word representing $1_G$ of length smaller than $2m$ is accepted. This means that at stage $N$ relation~\ref{equation1} is satisfied and $T$ accepts $c$. If $c$ is not in the maximal set, the machine never accepts.\end{proof}

Lemma~\ref{lemma.F_maximal_Z_effective} is no longer true if $G$ is not recursively presented. Indeed, the maximal set of pattern codings defining the full shift is given by the set of all inconsistent pattern codings, which is recursively enumerable if and only if $G$ is recursively presented by Proposition~\ref{dumbpropositionequiv3zeffective}. 

\begin{proposition}
	The class of SFTs is contained in the class of effectively closed subshifts.
\end{proposition}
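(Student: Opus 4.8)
The plan is to take an arbitrary SFT and exhibit a \emph{finite} set of pattern codings that defines it; since every finite set is trivially decidable and hence recursively enumerable, this immediately places the subshift in the effectively closed class. So let $X = X_{\FF}$ with $\FF = \{p_1,\dots,p_k\}$ a finite collection of forbidden patterns, where each $p_i$ has finite support $F_i \subset G$.

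The key step is to encode each forbidden pattern $p_i$ as a pattern coding. For every $h \in F_i$, since $S$ generates $G$, I can pick a word $w_h^{(i)} \in S^*$ with $w_h^{(i)} =_G h$; I then set $c_i = (w_h^{(i)}, (p_i)_h)_{h \in F_i}$. Because distinct elements of $F_i$ are represented by words that are distinct in $G$, the coding $c_i$ is automatically consistent, and by construction $p(c_i) = p_i$. I define $\CC = \{c_1,\dots,c_k\}$, which is finite.

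It remains to verify $X_{\CC} = X_{\FF}$. For a fixed $g \in G$ and index $i$, using $gw_h^{(i)} =_G gh$ one sees that $\bigcap_{(w,a)\in c_i}[a]_{gw} = \bigcap_{h \in F_i}[(p_i)_h]_{gh} = [p_i]_g$, the cylinder generated by $p_i$ on $g$. Intersecting the complements over all $g \in G$ and all indices $i$ then yields exactly $X_{\FF}$. Since $\CC$ is finite it is recursively enumerable (indeed decidable, matching Proposition~\ref{proposition.F_decidable_Z_decidable}), so $X = X_{\CC}$ is effectively closed.

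There is essentially no hard part here: the content of the statement is simply that a finite set of patterns can be presented as a finite, hence recursive, set of pattern codings. The only point requiring care is that the encoding of the group elements of each support by words in $S^*$ is always possible because $S$ is a generating set, and that the resulting codings are consistent --- both of which are immediate from the construction.
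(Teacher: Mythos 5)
Your proof is correct and follows essentially the same route as the paper: choose a pattern coding $c_p$ with $p(c_p)=p$ for each of the finitely many forbidden patterns, observe that the resulting finite set $\CC$ of codings is trivially recursively enumerable, and check $X_{\CC}=X_{\FF}$. You merely spell out the details the paper leaves implicit (the choice of word representatives via the generating set, consistency of the codings, and the cylinder identity $\bigcap_{(w,a)\in c_i}[a]_{gw}=[p_i]_g$).
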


\begin{proof}
	Let $X$ be an SFT. Then $X = X_{\FF}$ for a finite set $\FF$. For each $p \in \FF$ consider a pattern coding $c_p$ such that $p(c_p) = p$ and let $\CC = \{ c_p \mid p \in \FF\}$. Clearly $X = X_{\CC}$ and as $\CC$ is finite it is recursively enumerable.\end{proof}

\begin{proposition}\label{proposition_stability_intersection}
	The class of effectively closed subshifts is closed by finite intersections.
\end{proposition}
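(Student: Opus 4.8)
The plan is to establish the statement for the intersection of two subshifts and then obtain any finite intersection by a trivial induction. So let $X_1 = X_{\CC_1}$ and $X_2 = X_{\CC_2}$ be effectively closed subshifts over a common alphabet $\ag$, where $\CC_1$ and $\CC_2$ are recursively enumerable sets of pattern codings. The natural candidate for a defining set of $X_1 \cap X_2$ is simply the union $\CC = \CC_1 \cup \CC_2$, and the whole proof amounts to checking that this choice works both set-theoretically and computationally.

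First I would verify the identity $X_{\CC} = X_1 \cap X_2$. This is immediate from Definition~\ref{definition_effectively_closed}: the defining expression for $X_{\CC}$ is an intersection indexed over the pairs $(g,c)$ with $g \in G$ and $c \in \CC$, and since the index set splits according to whether $c \in \CC_1$ or $c \in \CC_2$, the intersection factors as $X_{\CC_1} \cap X_{\CC_2} = X_1 \cap X_2$. No appeal to compactness or to consistency of the pattern codings is needed here; inconsistent codings simply contribute empty cylinders in both defining sets, so the argument is purely formal.

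The only real content lies in showing that $\CC = \CC_1 \cup \CC_2$ is recursively enumerable, and this is where one must be slightly careful. Fix Turing machines $T_1$ and $T_2$ recognizing $\CC_1$ and $\CC_2$ respectively. On input a pattern coding $c$, a machine that first runs $T_1$ to completion and only afterwards runs $T_2$ would be incorrect, since $T_1$ need not halt when $c \notin \CC_1$. The remedy is to dovetail: for increasing $n \in \NN$, simulate $n$ steps of $T_1$ and $n$ steps of $T_2$ on $c$, and accept as soon as either machine accepts. This procedure halts and accepts precisely when $c \in \CC_1 \cup \CC_2$, so $\CC$ is recursively enumerable, as required.

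I expect no genuine obstacle: once one observes that the union of two recursively enumerable sets is recursively enumerable and that the operator $X_{(\cdot)}$ turns unions of pattern codings into intersections of the corresponding subshifts, the proposition follows. The single point demanding attention is the dovetailing in the computability step, so as not to rely on a halting behaviour that the recognizing machines are not guaranteed to have.
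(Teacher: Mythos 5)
Your proof is correct and follows essentially the same route as the paper: both identify $X_1 \cap X_2$ with $X_{\CC_1 \cup \CC_2}$ by the same formal splitting of the defining intersection, and both establish recursive enumerability of the union by running the two recognizers in parallel (your dovetailing is exactly the paper's ``launch both machines and accept if either accepts''). The attention you give to not running $T_1$ to completion is a good detail, but there is no substantive difference from the paper's argument.
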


\begin{proof}
	Let $X = X_{\CC_X}$ and $Y = Y_{\CC_Y}$ be effectively closed subshifts. Without loss of generality suppose $X,Y \subset \ag^G$ (same alphabet) and note that:
	\begin{align*}
	X \cap Y & = \left( \ag^G \setminus \bigcup_{g \in G, c \in \CC_X} \bigcap_{(w,a) \in c}[a]_{gw} \right) \cap \left( \ag^G \setminus \bigcup_{g \in G, c \in \CC_Y} \bigcap_{(w,a) \in c}[a]_{gw} \right) \\
	& = \ag^G \setminus \bigcup_{g \in G, c \in \CC_X \cup \CC_Y} \bigcap_{(w,a) \in c}[a]_{gw}\\
	& = X_{\CC_X \cup \CC_Y}
	\end{align*}
	Therefore, it suffices on entry $c$ to launch the Turing machines recognizing $\CC_X$ and $\CC_Y$ in parallel and accept if either of them accepts.
\end{proof}

The result obviously does not extend to countable intersections. If it were so, since every possible subshift is obtainable as an intersection of SFTs (enumerate the forbidden patterns, define $X_n = X_{p_1,\dots,p_n}$, then $X = \bigcap_{n \in \NN}X_n$), we would conclude that all subshifts are effectively closed. But there is an uncountable number of subshifts on a fixed alphabet, and effectively closed subshifts clearly constitute a countable set, so there must be one that is not effectively closed. 

\begin{proposition}\label{proposition_stability_union}
	For a recursively presented group the class of effectively closed subshifts is closed by finite unions.
\end{proposition}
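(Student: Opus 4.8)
The plan is to mirror the structure of Proposition~\ref{proposition_stability_intersection}, but the set-theoretic algebra does not cooperate as nicely: a finite union of subshifts is defined by a forbidden set that is, morally, a ``product'' of the two forbidden families rather than their union. Concretely, given $X = X_{\CC_X}$ and $Y = X_{\CC_Y}$ over a common alphabet $\ag^G$, a configuration lies in $X \cup Y$ if and only if it avoids all translates of (patterns coded by) $\CC_X$ \emph{or} it avoids all translates of $\CC_Y$. To forbid the complement, I would take, for each pair $(c_X, c_Y) \in \CC_X \times \CC_Y$, a pattern coding $c_X \vee c_Y$ that encodes the simultaneous occurrence of $p(c_X)$ at $1_G$ and $p(c_Y)$ at $1_G$: that is, $c_X \vee c_Y = c_X \cup c_Y$ as a set of tuples. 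The candidate defining set is $\CC = \{\, c_X \cup c_Y \mid c_X \in \CC_X,\ c_Y \in \CC_Y \,\}$.

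The key identity to verify is that $X \cup Y = X_{\CC}$ with this $\CC$. First I would argue the easy inclusion $X \cup Y \subseteq X_{\CC}$: if $x \in X$, then no translate of any $p(c_X)$ appears in $x$, so in particular no translate of the larger pattern $p(c_X \cup c_Y)$ (which contains $p(c_X)$ as a sub-pattern on a common support, after aligning supports) can appear, hence $x \in X_{\CC}$; symmetrically for $x \in Y$. The reverse inclusion requires a configuration $x \notin X \cup Y$: then some translate $g_X$ realizes a forbidden $p(c_X)$ and some translate $g_Y$ realizes a forbidden $p(c_Y)$. The subtlety is that these occurrences are at different positions $g_X \neq g_Y$, whereas the coding $c_X \cup c_Y$ asks both to occur with the \emph{same} anchoring translate. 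I would fix this by enlarging $\CC$ to include, instead of just $c_X \cup c_Y$, all codings $c_X \cup (g \cdot c_Y)$ where $g \cdot c_Y := (g w_i, a_i)_i$ ranges over all $g \in S^*$; equivalently, one forbids the relative-offset versions. Thus the corrected defining family is $\CC = \{\, c_X \cup (u \cdot c_Y) \mid c_X \in \CC_X,\ c_Y \in \CC_Y,\ u \in S^* \,\}$, and one checks $X \cup Y = X_{\CC}$ using that an occurrence of $c_X$ at $g_X$ together with an occurrence of $c_Y$ at $g_Y = g_X u$ is exactly an occurrence of $c_X \cup (u \cdot c_Y)$ at $g_X$.

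The remaining obligation is effectiveness: I must exhibit a Turing machine recognizing (or an algorithm recursively enumerating) this $\CC$. Here is where recursive presentation enters, and it is the main obstacle. Given an input coding $c$, the machine must decide whether $c$ can be split into a part matching some $c_X \in \CC_X$ and a part matching some translate of some $c_Y \in \CC_Y$. Since a pattern coding is a finite set of tuples, there are only finitely many ways to partition the tuples of $c$ into two subsets, and only finitely many candidate offsets $u$ to test among the words appearing in $c$; so the combinatorial search is finite. For each candidate split into $c'$ and $c''$ and each candidate offset, the machine dovetails: it runs the recognizer for $\CC_X$ on $c'$, the recognizer for $\CC_Y$ on the de-translated $c''$, and the $\WP(G)$-recognizer to confirm that the offset relations hold up to $=_G$. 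The point at which recursive presentation is indispensable is precisely this last check: to confirm that $c''$ really is a translate $u \cdot c_Y$ of an accepted $\CC_Y$-coding, and more importantly to match up the overlapping supports consistently, the machine needs to \emph{enumerate} the relation $w \,=_G\, w'$, which by Proposition~\ref{dumbpropositionequiv3zeffective} is available exactly when $\WP(G)$ is recursively enumerable, i.e. when $G$ is recursively presented. Dovetailing all these semi-decision procedures over the finitely many splits and offsets and accepting if any branch accepts gives a recursive enumeration of $\CC$, completing the proof.
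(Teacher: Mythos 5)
Your proof takes a genuinely different route from the paper's, and (modulo two repairs noted below) it is correct. The paper's argument is much shorter because it lets Lemma~\ref{lemma.F_maximal_Z_effective} do the combinatorial work: since $G$ is recursively presented, $\CC_X$ and $\CC_Y$ may be assumed \emph{maximal}, and then the two separate witnesses for $x \notin X$ and $x \notin Y$ can be absorbed into a single coding of a large patch of $x$ containing both occurrences, which by maximality lies in $\CC_X \cap \CC_Y$; hence $X \cup Y = X_{\CC_X \cap \CC_Y}$, and this intersection is recognized by running both machines and accepting when both accept. Your superposed codings $c_X \cup (u \cdot c_Y)$, with the offset $u \in S^*$ carried explicitly, perform by hand exactly the witness-combination that maximality gives for free. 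What your route buys is worth spelling out: if $u \cdot c_Y$ is read \emph{literally} (word concatenation), the family $\CC = \{\, c_X \cup (u \cdot c_Y) \,\}$ is purely syntactic, and your identity $X \cup Y = X_{\CC}$ only needs a word $u$ representing $g_X^{-1}g_Y$ to \emph{exist}, never to be computed or certified. Consequently the construction needs no access to $\WP(G)$ at all, and it proves the proposition for \emph{every} finitely generated group, dropping the recursive presentation hypothesis entirely --- strictly more than the statement or the paper's proof gives.

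This makes the claim in your last paragraph exactly backwards: the $=_G$-matching via an enumeration of $\WP(G)$ is not where recursive presentation is ``indispensable''; it is simply unnecessary. (Keeping it is harmless under the stated hypothesis: it enlarges $\CC$ by codings whose cylinders are still disjoint from $X \cup Y$, so the identity survives.) The one concrete defect is in your recognition procedure: the candidate offsets cannot be taken ``among the words appearing in $c$''. The offset $u$ in $c_X \cup (u \cdot c_Y)$ need not itself occur as a word of the coding --- if $\CC_Y = \{\{(a,2)\}\}$, the translated words have the form $ua$, not $u$ --- and restricting the search this way recognizes too small a family, whose associated subshift is strictly larger than $X \cup Y$ (already on $G = \ZZ$ with $\CC_X = \{\{(\epsilon,1)\}\}$ and $\CC_Y$ as above). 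Test instead all \emph{prefixes} of the words appearing in the second part of the split, or drop the finiteness claim and dovetail over all $u \in S^*$, which a semi-decision procedure may do. Relatedly, the splits $c = c' \cup c''$ should range over pairs of subsets whose union is $c$, not partitions, since $c_X$ and $u \cdot c_Y$ may share tuples.
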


\begin{proof}
Let $X = X_{\CC_X}$ and $Y = Y_{\CC_Y}$ be effectively closed subshifts. As $G$ is recursively presented we can suppose $\CC_X$ and $\CC_Y$ are maximal as in Lemma~\ref{lemma.F_maximal_Z_effective} As in the previous proof we can show: $$X \cup Y =\ag^G \setminus \left( \left(\bigcup_{g \in G, c \in \CC_X } \bigcap_{(w,a) \in c}[a]_{gw}\right) \cap \left(\bigcup_{g \in G, c \in  \CC_Y} \bigcap_{(w,a) \in c}[a]_{gw} \right)  \right) $$

Thus, as these sets are maximal we have  $X \cup Y = X_{\CC_X \cap \CC_Y}$. It suffices therefore to launch both Turing machines and accept if both accept.\end{proof}

\begin{proposition}\label{effectiveness_closed_factors}
 For recursively presented groups the class of effectively closed subshifts is closed under factors. 
\end{proposition}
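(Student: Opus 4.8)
The plan is to use the Curtis--Lyndon--Hedlund characterization to turn the problem into a manipulation of pattern codings, and then to invoke the maximal recursively enumerable set of pattern codings furnished by Lemma~\ref{lemma.F_maximal_Z_effective} --- this is exactly where the hypothesis of recursive presentation will be spent. Write $\phi : X \to Y$ for the factor, with $X \subset \ag_X^G$ effectively closed and $Y \subset \ag_Y^G$. By the block-code characterization there is a finite $F \subset G$ and a local map $\Phi : \ag_X^F \to \ag_Y$ with $\phi(x)_g = \Phi(\sigma_{g^{-1}}(x)|_F)$. My goal is to exhibit a recursively enumerable set $\CC_Y$ of $\ag_Y$-pattern codings with $Y = X_{\CC_Y}$; I will take $\CC_Y$ to be the set of \emph{all} codings whose cylinder misses $Y$, and show that this set is recursively enumerable.

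First I would fix, using Lemma~\ref{lemma.F_maximal_Z_effective}, a recursively enumerable maximal set $\CC_X$ defining $X$, so that a pattern coding $c$ lies in $\CC_X$ precisely when $X \cap \bigcap_{(w,a) \in c}[a]_w = \emptyset$ (inconsistent codings, defining the empty set, belong to $\CC_X$ automatically). Given an $\ag_Y$-pattern coding $d = (v_j,b_j)_{j \in J}$, I would form its finite set of \emph{lifts} $\PP(d)$: the collection of all $\ag_X$-pattern codings $c = (v_j s,\, a_{j,s})_{j \in J,\, s \in F}$ whose symbols $a_{j,s} \in \ag_X$ satisfy $\Phi\big((a_{j,s})_{s \in F}\big) = b_j$ for every $j$. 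The point is that $\PP(d)$ is finite and computable from $d$, $F$ and $\Phi$ alone, without ever consulting the word problem. The heart of the argument is then the equivalence
$$ \Big(\bigcap_{(v,b) \in d}[b]_v\Big) \cap Y = \emptyset \iff \PP(d) \subseteq \CC_X. $$
For one direction, any $y \in Y$ meeting the cylinder of $d$ lifts to some $x \in X$ with $\phi(x) = y$; setting $a_{j,s} := x_{v_j s}$ produces a consistent lift $c \in \PP(d)$ that is met by $x$, whence $c \notin \CC_X$. Conversely, if some consistent $c \in \PP(d)$ is met by $x \in X$, then $\phi(x)_{v_j} = \Phi\big((x_{v_j s})_{s \in F}\big) = b_j$, so $\phi(x) \in Y$ meets the cylinder of $d$.

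To conclude I would check recursive enumerability and then that $X_{\CC_Y} = Y$. Since $\CC_X$ is recursively enumerable and $\PP(d)$ is a finite set computable from $d$, the predicate $\PP(d) \subseteq \CC_X$ is recursively enumerable: semi-decide membership of each lift in $\CC_X$ in parallel and accept once all lifts have been accepted. Hence $\CC_Y := \{\, d \mid \PP(d) \subseteq \CC_X \,\}$ is recursively enumerable. By $\sigma$-invariance of $Y$ the cylinder of $d$ meets $Y$ somewhere iff it meets it at the identity, so by the displayed equivalence $\CC_Y$ is exactly the maximal set of codings whose cylinders avoid $Y$, giving $X_{\CC_Y} = Y$ and proving $Y$ effectively closed.

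I expect the genuine obstacle to be computational rather than dynamical. Deciding whether a lift truly meets $X$ would require knowing which of the words $v_j s$ represent the same element of $G$, i.e.\ the word problem, which under our hypothesis is only recursively enumerable and need not be decidable. The pattern-coding formalism sidesteps this because patterns are never formed explicitly: all the group-theoretic uncertainty --- overlaps, coincidences, inconsistency --- is absorbed into the single test ``$c \in \CC_X$'', whose availability for a \emph{maximal} defining set is precisely the content of Lemma~\ref{lemma.F_maximal_Z_effective}, and is the only place where recursive presentation is invoked. Inconsistent lifts cause no difficulty, as they define empty cylinders and therefore already lie in $\CC_X$.
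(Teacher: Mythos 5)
Your proposal is correct and follows essentially the same route as the paper's proof: the same lifting of each $\ag_Y$-pattern coding through the local rule $\Phi$ to a finite, computable set of $\ag_X$-codings, the same use of Lemma~\ref{lemma.F_maximal_Z_effective} to obtain a maximal recursively enumerable set $\CC_X$ (the only place recursive presentation enters), and the same parallel semi-decision procedure for the predicate ``all lifts of $d$ lie in $\CC_X$''. The only difference is cosmetic, in the final verification that $\CC_Y$ defines $Y$: the paper extracts a preimage of $y$ by an explicit compactness/subsequence argument, whereas you identify $\CC_Y$ with the maximal set of codings whose cylinders avoid $Y$ and invoke the standard fact that this maximal set defines the subshift --- equivalent arguments, since maximality of $\CC_X$ hands you a witness configuration $x \in X$ for every lift outside $\CC_X$.
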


\begin{proof}
	Let $X \subset \ag_X^G$ be an effectively closed subshift. As $G$ is recursively presented, the recursively enumerable set of pattern codings $\CC_X$ can be chosen to be maximal by Lemma~\ref{lemma.F_maximal_Z_effective}. Consider a factor code $\phi: X \twoheadrightarrow Y$ defined by a local function $\Phi: \ag_X^F \to \ag_Y$. Let $f_1,\dots, f_{|F|}$ be words in $S^*$ such that $F = \{f_1,\dots, f_{|F|}\}$.
	
	As $\phi$ is surjective, for each $a \in \ag_Y$ then $|\Phi^{-1}(a)| > 0$. Therefore we can associate to a pair $(w,a)$ a non-empty finite set of pattern codings $$\CC_{w,a} = \{ (wf_i, p_{f_i})_{i = 1,\dots,|F|} \mid p \in \Phi^{-1}(a) \}.$$
	
	That is, $\CC_{w,a}$ is a finite set of pattern codings over $\ag_X$ representing every possible preimage of $a$. For a pattern coding $c = (w_i,a_i)_{i \leq n}$ where $a_i \in \ag_Y$ we define: 
	$$\CC_{c} = \{ \bigcup_{(w,a)\in c}\widetilde{c}_{w,a} \mid \widetilde{c}_{w,a} \in \CC_{w,a}    \}.$$
	
	That is, $\CC_c$ is the finite set of pattern codings formed by choosing one possible preimage for each letter. This set has the property that if $\Phi$ is applied pointwise then $\Phi(p(\CC_c)) = \{p(c)\}$. Let $T$ be the Turing machine which on entry $c$ runs the machine recognizing $\CC_X$ on every pattern coding in $\CC_c$. If it accepts for every input, then $T$ accepts $c$. Let $\CC_Y$ be the set of pattern codings accepted by $T$. We claim $Y = Y_{\CC_Y}$.
	
	Let $y \in  Y_{\CC_Y}$ and $n \in \NN$. For each pattern coding $c$ such that $p(c) = y|_{B_n}$, there is a pattern coding $c_n \in \CC_c$ which does not belong to $\CC_X$. As $\CC_X$ is maximal we have that $[p(c_n)] \cap X \neq \emptyset$. Extracting a configuration $x_n$ from $[p(c_n)] \cap X$ we obtain a sequence $(x_n)_{n \in \NN}$. By compactness there is a converging subsequence with limit $\widetilde{x} \in X$. By continuity of $\phi$ we have that $y = \phi(\widetilde{x}) \in Y$. Conversely if $y \in Y$ there exists $x \in X$ such that $\phi(x)=y$. Therefore for every finite $F' \subset G$ and pattern coding $c$ with $p(c) = y|_{F'}$ there exists a pattern coding $\widetilde{c} \in \CC_c$ such that $p(\widetilde{c}) = x|_{F'F}$. Therefore, $c \notin \CC_y$ and thus $y \in Y_{\CC_Y}$.
\end{proof}

\begin{corollary}
	For a recursively presented group the following are true:
	\begin{itemize}
		\item The class of effectively closed subshifts is invariant under conjugacy.
		\item The class of effectively closed subshifts contains all sofic subshifts.
	\end{itemize}
\end{corollary}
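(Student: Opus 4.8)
The plan is to obtain both bullet points as immediate consequences of Proposition~\ref{effectiveness_closed_factors}, which carries all the substance; what remains is merely to recall the relevant definitions and chain the implications correctly.

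For the first item, I would argue that conjugacy invariance is just closure under factors applied in both directions. A conjugacy $\phi : X \to Y$ is by definition a bijective morphism, hence in particular a surjective one, i.e. a factor code $\phi : X \twoheadrightarrow Y$. Thus if $X$ is effectively closed, Proposition~\ref{effectiveness_closed_factors} gives that $Y$ is effectively closed. Symmetrically, $\phi^{-1} : Y \to X$ is again a conjugacy, so the converse implication holds as well. Therefore $X \simeq Y$ forces $X$ and $Y$ to be simultaneously effectively closed or not, which is exactly the assertion that being effectively closed is a conjugacy invariant.

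For the second item, I would unwind the definition of sofic. Let $X$ be sofic; by definition there is an SFT $Y$ together with a factor code $\phi : Y \twoheadrightarrow X$. The earlier proposition showing that every SFT is effectively closed gives that $Y$ is effectively closed, and then a single application of Proposition~\ref{effectiveness_closed_factors} shows that the factor $X$ is effectively closed as well. Since $X$ was an arbitrary sofic subshift, the whole class of sofic subshifts is contained in that of effectively closed subshifts.

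The only point deserving care --- and it is minor --- is checking that a conjugacy legitimately counts as a factor code in the precise sense required by Proposition~\ref{effectiveness_closed_factors}, namely a surjective morphism given by a local rule; this is immediate from the Curtis--Lyndon--Hedlund characterization recalled in Section~\ref{section.generalities}, since a bijective morphism is in particular surjective. I do not anticipate any genuine obstacle: both statements are formal corollaries, and the hypothesis that $G$ be recursively presented enters only through its use inside Proposition~\ref{effectiveness_closed_factors}.
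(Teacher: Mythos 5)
Your proposal is correct and follows exactly the route the paper intends: the corollary is stated without proof precisely because it is the immediate chaining of Proposition~\ref{effectiveness_closed_factors} with the facts that a conjugacy (and its inverse) is a factor code and that every SFT is effectively closed. Your added remark that a bijective morphism has a morphism inverse (hence both directions apply) is the only implicit step, and you handle it correctly.
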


We do not know if the previous results extend to the general case where $G$ is not recursively presented. The main obstruction is that without that hypothesis there is no control on the representations of the finite set $F$ which defines the local rule of the factor. As an example, suppose $F = \{1_G\}$, that is $\Phi: \ag_X \to \ag_Y$. In order to detect forbidden patterns by using the recursively enumerable set defining $X$ we would need to touch all possible representations of $F$, which is exactly the set $\WP(G)$.

Let $H \leq G$ be a subgroup of $G$. Given a subshift $X \subset \ag^G$ the $H$-projective subdynamics of $X$ is the subshift $\pi_H(X) \subset \ag^H$ defined as:

$$ \pi_H(X)  = \{ x \in \ag^H \mid \exists y \in X, \forall h \in H, x_h = y_h\}$$

\begin{proposition}\label{proposition_subdyn_effective}
	Let $G$ be a recursively presented group and $H \leq G$ a finitely generated subgroup of $G$. If $X \subset \ag^G$ is effectively closed, then its $H$-projective subdynamics $\pi_H(X)$ is effectively closed.
\end{proposition}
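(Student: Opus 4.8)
The plan is to produce a recursively enumerable set of pattern codings over $H$ that defines $\pi_H(X)$ by pulling back the \emph{maximal} defining set for $X$. Since $H$ is finitely generated, I would fix a finite generating set $S_H$ of $H$, closed under inverses and containing $1_H$. Each generator of $H$ is an element of $H \leq G$, hence representable by a word in $S^*$, and this choice induces a computable monoid homomorphism $\iota : S_H^* \to S^*$ with $\iota(v) =_G v$ for every $v \in S_H^*$. Given a pattern coding $c = (v_j, a_j)_j$ over $H$, write $\iota(c) := (\iota(v_j), a_j)_j$, which is a pattern coding over $G$.

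Because $G$ is recursively presented, Lemma~\ref{lemma.F_maximal_Z_effective} lets me assume $\CC_X$ is the recursively enumerable \emph{maximal} set defining $X$, that is, $c' \in \CC_X$ if and only if $X \cap \bigcap_{(w,a) \in c'}[a]_w = \emptyset$. I would then define $\CC_H$ as the set of pattern codings $c$ over $H$ such that $\iota(c) \in \CC_X$. The Turing machine recognizing $\CC_H$ simply computes $\iota(c)$ and runs the recognizer of $\CC_X$ on it, so $\CC_H$ is recursively enumerable. The point of the construction is the elementary identity that if $y \in X$ projects to $x \in \pi_H(X)$ then $x_v = y_v$ for every $v \in S_H^*$ read as an element of $H \leq G$; consequently a cylinder over $H$ meets $\pi_H(X)$ exactly when the corresponding $G$-cylinder meets $X$, which turns membership in the maximal defining set of $\pi_H(X)$ into membership of $\iota(c)$ in $\CC_X$.

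It then remains to verify $\pi_H(X) = X_{\CC_H}$. The inclusion $\pi_H(X) \subseteq X_{\CC_H}$ follows from the $H$-invariance of $\pi_H(X)$: if some $H$-translate of a configuration $x \in \pi_H(X)$ realized a coding $c \in \CC_H$, then $\iota(c) \notin \CC_X$ by maximality, a contradiction. For the converse, I would take $x \in X_{\CC_H}$ and argue that for every finite $F \subset H$ one has $X \cap \bigcap_{h \in F}[x_h]_h \neq \emptyset$; otherwise the coding reading $x$ on $F$ would lie in $\CC_H$ and exclude $x$. These are nonempty closed subsets of the compact space $\ag^G$ enjoying the finite intersection property, so their total intersection is nonempty, and any $y$ in it satisfies $y|_H = x$, giving $x \in \pi_H(X)$.

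The main obstacle is guaranteeing that the recognizer for $\CC_H$ is genuinely effective, and this is exactly where recursive presentation of $G$ is indispensable: without it the maximal set $\CC_X$ — equivalently, the ability to certify $X \cap (\text{cylinder}) = \emptyset$ — need not be recursively enumerable, and the computability of $\iota$ alone would not suffice. The remaining ingredients (computability of $\iota$, the cylinder-matching identity, and the compactness argument) are routine.
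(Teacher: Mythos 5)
Your proposal is correct and follows essentially the same route as the paper: fix words in $S^*$ representing the generators of $H$ (your $\iota$ is the paper's $\gamma$), use Lemma~\ref{lemma.F_maximal_Z_effective} to take the maximal recursively enumerable set $\CC_X$, and define $\CC_H$ as the pullback $\{c \mid \iota(c) \in \CC_X\}$, recognized by composing $\iota$ with the recognizer for $\CC_X$. The only difference is that you spell out the final verification that $\pi_H(X) = X_{\CC_H}$ (via $H$-invariance and the compactness/finite-intersection argument), which the paper compresses into the single observation that $c \in \CC_H \iff [p(\gamma(c))] \cap X = \emptyset$.
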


\begin{proof}
 As $H$ is finitely generated, there exists a finite set $S' \subset H$ such that $\langle S' \rangle = H$. As $G$ is finitely generated by $S$ there exists a function $\gamma: S' \to S^*$ such that $s' =_G \gamma(s')$ (that is, every element of $S'$ can be written as a word in $S^*$). Extend the function $\gamma$ to act by concatenation over words in $S'^*$. 
 
 As $G$ is recursively presented, by Lemma~\ref{lemma.F_maximal_Z_effective} the set of pattern codings $\CC_G$ defining $X$ can be chosen to be maximal. Let $c = (w_i,a_i)_{i \in I}$ a pattern coding where $w_i \in S'^*$ and consider $\gamma(c) = (\gamma(w_i),a_i)_{i \in I}$. Let $T$ be the Turing machine which on entry $c$ runs the algorithm recognizing $\CC_G$ on entry $\gamma(c)$ and accepts if and only if this machine accepts. Clearly $\CC_H = \{ c \mid T \text{ accepts } c  \}$ is recursively enumerable. Also, as $\CC_G$ is a maximal set of pattern codings then $c \in \CC_H \iff [p(\gamma(c))] \cap X = \emptyset$. Therefore $\pi_H(X)  = X_{\CC_H}$.\end{proof}


Besides all of these obstructions, even for recursively presented groups there are very simple subshifts which do not fall in this class. In order to illustrate this limitation we introduce the \define{One-or-less subshift}.

\subsection{The One-or-less subshift}\label{subsection_one_ore_less}

Consider the subshift $X_{\leq 1} \subset \{0,1\}^G$ whose configurations contain at most one appearance of the letter $1$.

$$ X_{\leq 1} = \{ x \in \{0,1\}^G \mid 1 \in \{x_g,x_h\} \implies g = h \}$$

As we shall see later, this subshift is related to the word problem of a group. In the literature, it is sometimes called the ``sunny side up'' subshift. We begin by showing some properties of $X_{\leq 1}$.

\begin{proposition}
	If $G$ is infinite, then $X_{\leq 1}$ is not an SFT.
\end{proposition}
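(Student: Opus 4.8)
The plan is to argue by contradiction using the standard local characterisation of SFTs. Suppose $X_{\leq 1}$ were an SFT, so that $X_{\leq 1} = X_{\FF}$ for some finite set of forbidden patterns $\FF$. Since $\FF$ is finite, the union of the supports of its patterns is a finite subset of $G$ and hence is contained in some ball $B_n$. My aim is to exhibit a single configuration that avoids every pattern of $\FF$ yet contains two $1$'s, which would contradict $X_{\FF} = X_{\leq 1}$.

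First I would use that $G$ is infinite to locate two far-apart sites. Because every ball in $\Gamma(G,S)$ is finite while $G$ itself is infinite, the set $G \setminus B_{2n}$ is nonempty, so there exists $h \in G$ with $|h| > 2n$. I then define the configuration $x \in \{0,1\}^G$ by $x_{1_G} = x_h = 1$ and $x_g = 0$ for all other $g$. This $x$ carries two $1$'s, so $x \notin X_{\leq 1}$.

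The key step is to verify that no pattern of $\FF$ can occur anywhere in $x$. A forbidden pattern occurring at position $g$ reads the contents of $x$ on a translate of its support, which lies inside $g B_n$. If $g B_n$ contained both marked sites $1_G$ and $h$, then $|g^{-1}| \leq n$ and $|g^{-1}h| \leq n$, so by symmetry of the generating set the triangle inequality gives $|h| \leq |g| + |g^{-1}h| = |g^{-1}| + |g^{-1}h| \leq 2n$, contradicting $|h| > 2n$. Hence every translate $g B_n$ meets at most one of the two marked sites, so the window $\sigma_{g^{-1}}(x)|_{B_n}$ contains at most one $1$. Such a window also appears in a genuine point of $X_{\leq 1}$: the configuration $y$ that agrees with $x$ on $g B_n$ and is $0$ elsewhere has at most one $1$, so $y \in X_{\leq 1} = X_{\FF}$, and $\sigma_{g^{-1}}(y)$ agrees with $\sigma_{g^{-1}}(x)$ on $B_n$. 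Since no forbidden pattern occurs in a point of $X_{\FF}$, the window at $g$ is not forbidden. As $g$ was arbitrary, no pattern of $\FF$ occurs in $x$, whence $x \in X_{\FF} = X_{\leq 1}$ -- the desired contradiction.

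I expect the only delicate point to be the distance bookkeeping in the third step: one must take care that the relevant window is the translate $g B_n$ rather than $B_n$ itself, and that the symmetry $|g^{-1}| = |g|$ is exactly what lets the triangle inequality keep the two $1$'s from ever lying in a common window; the surrounding gluing argument is then routine. The hypothesis that $G$ is infinite enters solely to produce an element of length exceeding $2n$, and this is precisely the point where finiteness of $G$ would make the argument fail, as one would expect.
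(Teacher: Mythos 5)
Your proof is correct and takes essentially the same approach as the paper: both assume $X_{\leq 1} = X_{\FF}$ with $\FF$ finite, place two $1$'s at sites too far apart for any translate of the union of the supports of $\FF$ to see both, and conclude that the resulting configuration lies in $X_{\FF}$ but not in $X_{\leq 1}$. The only differences are bookkeeping --- you work with a ball $B_n$ and the triangle inequality where the paper picks $g$ with $F \cap gF = \emptyset$ --- and your gluing step verifying that a window containing at most one $1$ cannot be forbidden is spelled out more explicitly than in the paper's rather terse argument.
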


\begin{proof}
	Suppose $X_{\leq 1} = X_{\FF}$ for a finite $\FF$ and let $$F = \bigcup_{p \in \FF, p \text{ has support } F'} F'.$$ 
	As $G$ is infinite, there exists $g \in G$ such that $F \cap gF = \emptyset$. As all forbidden patterns have their support contained in $F$ then: $$([1]_g \cap [1]_{1_G}) \cap X_{\FF} \neq \emptyset$$
	But $([1]_g \cap [1]_{1_G}) \cap X_{\leq 1} = \emptyset$. Therefore $X_{\FF} \not\subset X_{\leq 1}$.\end{proof}

This subshift has already been studied in~\cite{DahmaniYaman2008}. In that article the authors showed that the action of a relatively hyperbolic group on its boundary is related to $X_{\leq 1}$ being sofic. They said a group $G$ has the \define{special symbol property} if $X_{\leq 1} \subset \{0,1\}^G$ is a sofic subshift. They furthermore proved some stability properties among which are:

\begin{enumerate}
	\item if $G$ has the special symbol property then $G$ is finitely generated.
	\item If $G$ splits in a short exact sequence $1 \to N \to G \to H \to 1$ and both $N$ and $H$ satisfy the special symbol property, then $G$ also does.
	\item If $[G:H] < \infty$ then $G$ has the special symbol property if and only if $H$ does.
	\item The special symbol property is true for:
	\begin{itemize}
		\item Finitely generated free groups.
		\item Finitely generated abelian groups.
		\item Hyperbolic groups.
		\item Poly-hyperbolic groups.
	\end{itemize}
\end{enumerate}

Besides the restriction of $G$ being finitely generated the authors did not present any example of group without the special symbol property. In this section we introduce a computability obstruction for this property which at the same time shows one of the limitations of the classical approach to effectiveness.

\begin{proposition}\label{proposition.one_or_less_non_effective}
Let $G$ be a recursively presented group. Then $X_{\leq 1}$ if effectively closed if and only if $\WP(G)$ is decidable.
\end{proposition}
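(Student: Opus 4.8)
The plan is to prove both implications by tying the word problem to a single family of two-point pattern codings. Throughout, write $c_w := \{(\epsilon,1),(w,1)\}$ for $w \in S^*$; this coding is always consistent, and $\bigcap_{(v,a)\in c_w}[a]_v = [1]_{1_G}\cap[1]_w$. The observation I would establish first is that $[1]_{1_G}\cap[1]_w \cap X_{\leq 1} = \emptyset$ if and only if $w \neq_G 1_G$. Indeed, if $w =_G 1_G$ the two prescribed symbols coincide at $1_G$, and the configuration carrying a single $1$ at the identity witnesses a nonempty intersection; whereas if $w \neq_G 1_G$ the pattern forces two $1$'s at distinct positions, which is forbidden in $X_{\leq 1}$.

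For the implication $\WP(G)$ decidable $\Rightarrow X_{\leq 1}$ effectively closed, I would take $\CC = \{c_w \mid w \in S^*,\ w \neq_G 1_G\}$. Since $\WP(G)$ is decidable, its complement is decidable and hence recursively enumerable, so $\CC$ is recursively enumerable. It then remains to verify $X_{\CC} = X_{\leq 1}$: translating $c_w$ by each $g \in G$ forbids exactly the pair of distinct positions $(g, gw)$, and as $g$ ranges over $G$ and $w$ over $S^* \setminus \WP(G)$ these pairs exhaust all pairs of distinct elements of $G$. Thus $X_{\CC}$ is precisely the set of configurations with at most one $1$.

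For the converse, suppose $X_{\leq 1}$ is effectively closed. Since $G$ is recursively presented, Lemma~\ref{lemma.F_maximal_Z_effective} lets me choose a recursively enumerable and maximal set of pattern codings $\CC$ with $X_{\leq 1} = X_{\CC}$. By maximality, $c \in \CC$ if and only if $X_{\leq 1} \cap \bigcap_{(v,a)\in c}[a]_v = \emptyset$. Applying this to $c_w$ together with the observation above gives $c_w \in \CC \iff w \neq_G 1_G$. Because $w \mapsto c_w$ is computable and $\CC$ is recursively enumerable, the complement of $\WP(G)$ is recursively enumerable. Since $G$ is recursively presented, $\WP(G)$ itself is recursively enumerable by Proposition~\ref{dumbpropositionequiv3zeffective}, and a language that is both recursively enumerable and whose complement is recursively enumerable is decidable; hence $\WP(G)$ is decidable.

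Both directions are short once the right family of codings is identified, so I do not anticipate a genuine technical obstacle. The only point needing care is the converse: the argument relies on using the \emph{maximal} set of pattern codings rather than an arbitrary recursively enumerable one, since maximality is exactly what converts ``the two-point pattern cannot occur in $X_{\leq 1}$'' into membership in a recursively enumerable set. With an arbitrary defining set one would instead have to detect that $c_w$ is covered by the defining codings, which entangles the semi-decision procedure with the word problem; passing to the maximal set sidesteps this difficulty entirely.
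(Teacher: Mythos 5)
Your proof is correct and takes essentially the same approach as the paper: the same two-point codings $c_w=\{(\epsilon,1),(w,1)\}$ drive both directions, and your converse is the paper's argument verbatim (maximal recursively enumerable $\CC$ from Lemma~\ref{lemma.F_maximal_Z_effective}, the equivalence $c_w\in\CC \iff w\neq_G 1_G$, plus recursive presentation making $\WP(G)$ recursively enumerable). The only cosmetic difference is in the forward direction, where the paper's algorithm accepts any coding containing a pair $(w_1,1),(w_2,1)$ with $w_1w_2^{-1}\neq_G 1_G$, whereas you use only the two-point codings themselves and check directly that they define $X_{\leq 1}$.
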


\begin{proof}
If $\WP(G)$ is decidable then $X_{\leq 1}$ is effectively closed. Indeed, an algorithm recognizing a maximal set of pattern codings $\CC$ such that $X_{\leq 1} = X_{\CC}$ is the following: On input $c$ it considers every pair $(w_1,1), (w_2,1)$ in $c$ and accept if and only if $w_1w_2^{-1} \neq_G 1_G$ for a pair.
Conversely, as $G$ is recursively presented, the word problem is already recursively enumerable. It suffices to show it is co-recursively enumerable. 

By Lemma~\ref{lemma.F_maximal_Z_effective} there exists a maximal set of forbidden pattern codings $\CC$ with $X_{\leq 1} = X_{\CC}$. Given $w \in S^*$, consider the pattern coding $c_w = \{(\epsilon,1),(w,1)\}$. Note that $w \neq_G 1_G \iff c_w \in \CC$. Therefore the the algorithm which on entry $w \in S^*$ runs the algorithm recognizing $\CC$ on entry $c_w$ and accepts if and only if this one accepts, recognizes $S^* \setminus \WP(G)$. Hence $\WP(G)$ is co-recursively enumerable.\end{proof}

Using Proposition~\ref{effectiveness_closed_factors} we obtain the following corollary which answers a question of Dahmani and Yaman~\cite{DahmaniPC,YamanPC}.

\begin{corollary}\label{corollary_one_or_less_not_sofic}
	If $G$ is recursively presented and $\WP(G)$ is undecidable, then $X_{\leq 1}$ is not sofic.
\end{corollary}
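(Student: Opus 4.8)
The plan is to argue by contradiction, leaning entirely on two structural results already established in this section: the closure of effectively closed subshifts under factors for recursively presented groups (Proposition~\ref{effectiveness_closed_factors}), and the computability dichotomy for the one-or-less subshift (Proposition~\ref{proposition.one_or_less_non_effective}). The point is that soficity pushes $X_{\leq 1}$ into the effectively closed class, while undecidability of the word problem forbids it from being there.

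First I would assume, for contradiction, that $X_{\leq 1}$ is sofic. By definition this yields an SFT $Y$ together with a factor code $\phi \colon Y \twoheadrightarrow X_{\leq 1}$. Since every SFT is effectively closed (shown earlier in this subsection) and $G$ is recursively presented, Proposition~\ref{effectiveness_closed_factors} applies and guarantees that the factor $X_{\leq 1}$ is itself effectively closed. Thus the sofic hypothesis forces $X_{\leq 1}$ into the class of effectively closed subshifts.

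Next I would invoke Proposition~\ref{proposition.one_or_less_non_effective}, which asserts that for a recursively presented group $X_{\leq 1}$ is effectively closed if and only if $\WP(G)$ is decidable. Reading the ``only if'' direction contrapositively, the assumed undecidability of $\WP(G)$ implies that $X_{\leq 1}$ is \emph{not} effectively closed, directly contradicting the conclusion of the previous step. Hence $X_{\leq 1}$ cannot be sofic.

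There is essentially no hard step here, as all the real work is carried by the earlier results; the only point deserving care is that the recursive-presentation hypothesis is used twice and is indispensable in both places. It is needed to invoke the factor-closure of Proposition~\ref{effectiveness_closed_factors}, which itself rests on the existence of a maximal recursively enumerable set of pattern codings from Lemma~\ref{lemma.F_maximal_Z_effective}, and it is needed again for the dichotomy of Proposition~\ref{proposition.one_or_less_non_effective}. Consequently the corollary is stated at exactly the level of generality where both ingredients are available, and one should resist trying to drop this hypothesis, since neither tool survives without it.
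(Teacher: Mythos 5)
Your proof is correct and is precisely the argument the paper intends: soficity plus Proposition~\ref{effectiveness_closed_factors} (with SFTs being effectively closed) would make $X_{\leq 1}$ effectively closed, contradicting Proposition~\ref{proposition.one_or_less_non_effective} when $\WP(G)$ is undecidable. Your remark that the recursive-presentation hypothesis enters in both ingredients is also consistent with the paper's own discussion of why that hypothesis is indispensable.
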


\subsection{$G$-Effectiveness}
\label{subsection.G_effectiveness}

In order to escape the limitations of Lemma~\ref{lemma.F_maximal_Z_effective} and Proposition~\ref{effectiveness_closed_factors} and include subshifts such as $X_{\leq 1}$, we introduce the class of $G$-effectively closed subshifts. The aim of this subsection is to briefly introduce these objects and remark their properties. They are studied in detail in Section~\ref{section.G_machines}.

A set of pattern codings $\CC$ is said to be \define{recursively enumerable with oracle $\mathcal{O}$} if there exists a Turing machine with oracle $\OO$ which accepts on input $c$ if and only if $c \in \CC$. The oracle $\OO$ is a language to which these special machines have the right to ask if $w \in \OO$ and receive the correct answer in one step.

\begin{definition}\label{definition_G_effectively_closed}
	A subshift $X \subset \ag^G$ is \define{$G$-effectively closed} if there is a set of pattern codings $\CC$ such that $X = X_{\CC}$, and $\CC$ is recursively enumerable with oracle $\WP(G)$.
\end{definition}

We remark the following properties that either fall directly from the definition or are obtained from adding the word problem $\WP(G)$ as oracle to the previous results. Let $G$ be a finitely generated group, then:

\begin{enumerate}
	\item If $X$ a $G$-effectively closed subshift then a maximal set of pattern codings $\CC$ such that $X = X_{\CC}$ is recursively enumerable with oracle $\WP(G)$.
	\item The class of $G$-effectively closed subshift is closed under finite intersections and unions.
	\item The class of $G$-effectively closed subshifts is closed under factors.
	\item Being $G$-effectively closed is a conjugacy invariant.
	\item The class of $G$-effectively closed subshifts contains all sofic subshifts.
	\item The class of $G$-effectively closed subshifts contains all effectively closed subshifts.
	\item If $\WP(G)$ is decidable, then every $G$-effectively closed subshift is effectively closed.
	\item $X_{\leq 1}$ is a $G$-effectively closed subshift.
\end{enumerate}

The only property which does not extend nicely is the stability under taking projective subdynamics. Clearly if $X\subset \ag^G$ is $G$-effectively closed then for any finitely generated $H \leq G$ we would have that the $H$-projective subdynamics $\pi_H(X) $ can be defined by a set of pattern codings which is recursively enumerable with oracle $\WP(G)$. Nevertheless, it may not be possible to define such set with Turing machines using oracle $\WP(H)$.

\begin{proposition}\label{proposition_subdyn_not_stable}
	Let $G$ be a group which is not recursively presented. There exists a $G \times \ZZ$-effectively closed subshift $X \subset \ag^{G \times \ZZ}$ such that its $\ZZ$-projective subdynamics is not $\ZZ$-effectively closed.
\end{proposition}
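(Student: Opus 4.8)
The plan is to exploit that $\WP(\ZZ)$ is decidable, so that (by property~7 of the list above) a $\ZZ$-subshift is $\ZZ$-effectively closed if and only if it is effectively closed in the classical sense. Hence it suffices to construct a $G\times\ZZ$-effectively closed subshift $X$ whose $\ZZ$-projective subdynamics fails to be (classically) effectively closed. Write $S_G$ for a symmetric generating set of $G$ and let $t=(1_G,1)$ generate the $\ZZ$-factor, so that $S_G\times\{0\}$ together with $\{t,t^{-1},(1_G,0)\}$ generates $G\times\ZZ$. Fix a recursive enumeration $(w_n)_{n\in\NN}$ of $S_G^{*}$. On the alphabet $\{0,1\}$ I define $Y\subset\{0,1\}^{\ZZ}$ to be the $\ZZ$-subshift obtained by forbidding, for every $n$ with $w_n=_G 1_G$, the word $10^{n}1$ (so $10^01=11$). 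I then let $X\subset\{0,1\}^{G\times\ZZ}$ be the subshift all of whose $\ZZ$-columns lie in $Y$; concretely $X=X_{\CC}$ for
\[
\CC=\{\,c_n\mid w_n=_G 1_G\,\},\qquad c_n=\{(\epsilon,1)\}\cup\{(t^{i},0)\mid 1\le i\le n\}\cup\{(t^{n+1},1)\}.
\]

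Next I would check the two easy halves. To see that $X$ is $G\times\ZZ$-effectively closed, note that $\CC$ is recursively enumerable with oracle $\WP(G\times\ZZ)$: on input a pattern coding, a machine decides whether it has the syntactic shape $c_n$ for some $n$ (this is decidable), extracts that $n$, recomputes $w_n$ from the fixed enumeration, and queries the oracle on the word $w_n$ read inside $G\times\{0\}\subset G\times\ZZ$; since such a word is trivial in $G\times\ZZ$ exactly when $w_n=_G 1_G$, the machine accepts iff $c_n\in\CC$. For the projection, identify $H=\{1_G\}\times\ZZ$ with $\ZZ$; given $y\in Y$ the configuration putting $y$ on the column over $1_G$ and $0$ everywhere else lies in $X$ (the constant $0$ column avoids every forbidden word), so $\pi_{H}(X)\supseteq Y$, while forbidding the words $10^n1$ on each column forces $\pi_H(X)\subseteq Y$; thus $\pi_H(X)=Y$.

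The remaining and main point is that $Y$ is not effectively closed, and this is where I would be most careful. Suppose it were; since $\ZZ$ is recursively presented I may invoke Lemma~\ref{lemma.F_maximal_Z_effective} to choose a recursively enumerable maximal set of pattern codings $\CC_Y$ with $Y=X_{\CC_Y}$, so that a coding $c$ lies in $\CC_Y$ precisely when $[p(c)]\cap Y=\emptyset$. Letting $\hat c_n$ denote the (recursively computable) coding of the word $10^n1$ over $\ZZ$, one has $\hat c_n\in\CC_Y\iff w_n=_G 1_G$. Since $n\mapsto\hat c_n$ is recursive and $\CC_Y$ is recursively enumerable, the set $\{n\mid w_n=_G 1_G\}$ would be recursively enumerable, and therefore so would $\WP(G)$; by Proposition~\ref{dumbpropositionequiv3zeffective} this contradicts the assumption that $G$ is not recursively presented. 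The only delicate steps are the bookkeeping translating the oracle $\WP(G\times\ZZ)$ into answers about $\WP(G)$, and the verification that a word is non-appearing in $Y$ iff it already contains a forbidden factor (so that $\CC_Y$ genuinely decides the factors $10^n1$); both become routine once one observes that any admissible word of $Y$ extends to a configuration simply by padding with $0$'s.
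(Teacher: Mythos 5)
Your proof is correct and follows essentially the same route as the paper's: both encode $\WP(G)$ into vertical forbidden words of a column subshift, use padding to show a cylinder meets the projection iff the corresponding word is nontrivial in $G$, and then apply Lemma~\ref{lemma.F_maximal_Z_effective} (via the recursive presentability of $\ZZ$) to conclude that effectiveness of the projection would make $\WP(G)$ recursively enumerable, a contradiction. The only difference is cosmetic: you encode words by unary indices $10^n1$ over $\{0,1\}$, while the paper works over the alphabet $S\cup\{\star\}$ and forbids the words $\star w\star$ for $w\in\WP(G)$ directly.
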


\begin{proof}
	Let $\ag = S \cup \{\star\}$. For $w \in S^*$, let $p_w$ defined over the support $\{1_G\} \times \{0, \dots |w|+1\}$ such that $(p_w)_{(1_G,0)} = (p_w)_{(1_G,|w|+1)} = \star$ and for $j \in \{1,\dots,|w|\}$ then  $(p_w)_{(1_G,j)} = w_j$. Let $X := X_{\FF} \subset \ag^{G \times \ZZ}$  be defined by the set of forbidden patterns $\FF = \{p_w \mid w \in \WP(G) \}$. Clearly $X$ is $G$-effectively closed. Every $\ZZ$-coset of a configuration $x \in X$ contains a bi-infinite sequence $y \in \ag^{\ZZ}$ such that either $y$ contains at most one symbol $\star$ or every word appearing between two appearances of $\star$ represents $1_G$ in $G$.
	
	We claim that $\pi_{\ZZ}(X)$ is not effectively closed. If it were, there would exist a maximal set of forbidden pattern codings which is recursively enumerable and defines $\pi_{\ZZ}(X)$. Therefore given $w \in S^*$ a machine could run the algorithm for the word $\star w\star$ and it would be accepted if and only if $w =_G 1_G$. This would imply that $G$ is recursively presented.\end{proof}

In Section~\ref{section.G_machines} a characterization of these subshifts by Turing machines which instead of a tape have Cayley graphs of groups is given. This allows an alternative definition of $G$-effectiveness which at the same time gives a concrete construction of Turing machines with oracle.

\subsection{Groups with $G$-effective subshifts which are not sofic. }
\label{section.amenable_groups}

In the work of two of the authors~\cite{aubrun_sablik_2016}, it is shown that for subshifts in the hyperbolic plane that satisfy a technical condition, the property of being sofic is equal to the property of being effectively closed. By hyperbolic plane it is meant the monoid $M = \langle a,a^{-1},b \mid ab = ba^2, aa^{-1} = 1_M \rangle$ which looks like a shear of the Baumslag-Solitar group $BS(1,2)$ (here all the definitions given above for groups naturally extend to monoids). The reason behind this fact is that the doubling structure of this monoid allows to transmit the information on a row $b^n\langle a\rangle$ to all rows $b^{m}\langle a \rangle$ where $m \geq n$, and thus a Turing machine calculation can be implemented as an extra SFT extension. This shows that any subshift defined by a recursively enumerated set of pattern codings is in fact a sofic subshift.

This result raises the following questions:

	\begin{itemize}
		\item If we consider the group $BS(1,2)$, is it true that every effectively closed subshift is sofic?
		\item Is there any group $G$ such that every $G$-effectively closed subshift is sofic?
		\item Is there any group such that the class of effectively closed subshifts and sofic subshifts coincide?
	\end{itemize}
	
In this section we give a negative answer to the first question, and give partial negative answers to the second and third questions. More precisely, we show that the equality between the class of $G$-effectively closed subshifts and sofic subshifts cannot happen in three cases: recursively presented groups with undecidable word problem, amenable groups and groups with two or more ends.

\begin{theorem}
\label{theorem.one_or_less_non_sofic}
For every recursively presented group $G$ with undecidable word problem there exists a $G$-effectively closed subshift which is not sofic
\end{theorem}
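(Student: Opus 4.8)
The plan is to exhibit a single explicit witness, namely the one-or-less subshift $X_{\leq 1}$ from Subsection~\ref{subsection_one_ore_less}. The guiding intuition is that $G$-effectiveness differs from classical effectiveness in precisely the power needed to handle this example: an oracle to $\WP(G)$ lets a machine decide whether two words $w_1,w_2 \in S^*$ represent the same group element, and this equality test is exactly what is required to recognize the forbidden configurations of $X_{\leq 1}$. So I would argue that $X_{\leq 1}$ is $G$-effectively closed, while under the hypotheses on $G$ it fails to be sofic, and then simply combine the two facts.

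For the first step I would verify directly that $X_{\leq 1}$ is $G$-effectively closed, recovering property~8 of the list in Subsection~\ref{subsection.G_effectiveness}. Consider the set $\CC$ of pattern codings accepted by the following machine with oracle $\WP(G)$: on input $c$, examine every pair of tuples $(w_1,1),(w_2,1)$ occurring in $c$, query the oracle to test whether $w_1 w_2^{-1} =_G 1_G$, and accept if and only if some such pair has $w_1 w_2^{-1} \neq_G 1_G$. This machine halts on every input, so $\CC$ is recursively enumerable (in fact decidable) with oracle $\WP(G)$. A pattern coding lies in $\CC$ precisely when it forces two distinct positions to carry the symbol $1$, which is exactly the constraint defining $X_{\leq 1}$; hence $X_{\leq 1} = X_{\CC}$ and $X_{\leq 1}$ is $G$-effectively closed.

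For the second step I would invoke Corollary~\ref{corollary_one_or_less_not_sofic}: since $G$ is recursively presented and $\WP(G)$ is undecidable, $X_{\leq 1}$ is not sofic. Combining the two steps produces a $G$-effectively closed subshift that is not sofic, which is the assertion of the theorem. I expect no genuine obstacle at this final stage, because the substantial work has already been done: the real difficulty sits inside Proposition~\ref{proposition.one_or_less_non_effective} and the corollary resting on it, where maximality of the defining set of pattern codings (Lemma~\ref{lemma.F_maximal_Z_effective}) is used to convert soficity — equivalently classical effectiveness, via closure under factors for recursively presented groups — into a decision procedure for $\WP(G)$, contradicting undecidability. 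The present theorem merely contrasts the positive side (availability of the oracle) against that negative side.
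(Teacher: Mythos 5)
Your proposal is correct and is essentially the paper's own proof: the paper likewise takes $X_{\leq 1}$ as the witness, citing its $G$-effectiveness (property~8 of the list in Subsection~\ref{subsection.G_effectiveness}, which your oracle machine verifies explicitly, mirroring the argument of Proposition~\ref{proposition.one_or_less_non_effective} with the oracle in place of decidability) and its non-soficity via Corollary~\ref{corollary_one_or_less_not_sofic}. The only difference is that you spell out the oracle construction that the paper leaves as a remark, which is a harmless elaboration rather than a different route.
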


\begin{proof}
	The subshift $X_{\leq 1}$ is $G$-effectively closed but not sofic for recursively presented $G$ as stated in Corollary~\ref{corollary_one_or_less_not_sofic}.\end{proof}

Clearly, this does not say anything about the existence of effectively closed subshifts which are not sofic in this case. In fact, it is not known whether $X_{\leq 1}$ is sofic for all groups with decidable word problem or not. 

For the case of amenable groups, we take inspiration in a classical construction for $\ZZ^2$ called the mirror shift. It consists of all configurations over the alphabet $\ag = \left\{ \bblanc,\bnoir,\brouge \right\}$ such that these forbidden patterns do not appear.

$$\FF :=\left\{\bBR,\bNR,\bRB,\bRN\right\}\cup \bigcup_{w\in \ag^*}\left\{ \brouge w \brouge, \bnoir w \brouge \tilde{w} \bblanc , \bblanc w \brouge \tilde{w} \bnoir \right\},$$

where $\tilde{w}$ denotes the reverse of the word $w$.

This subshift is easily seen to be effectively closed, while it can be proven that it is not sofic. Indeed, if $S$ is the canonical set of generators of $\ZZ^2$, then $|B_{n+1} \setminus B_n| /|B_n| $ tends to $0$ as $n$ goes to infinity. From this it is possible to deduce that in a suitable SFT extension of the mirror shift, there are two different patterns sharing the same boundary which yield different patterns in the mirror subshift. As shown in Figure~\ref{figure.example_config_mirror}, switching a pattern for the other produces a point outside the subshift yielding a contradiction. In what follows we generalize this technique to amenable groups.

\begin{figure}[H]
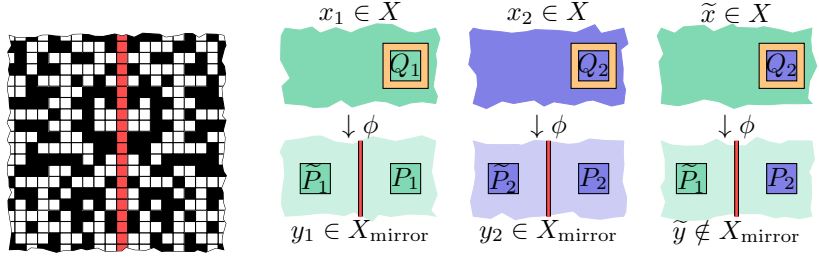

	\centering
	\include{example_config_mirror}
	\caption{Configuration in the mirror shift and technique showing non-soficity.}
	\label{figure.example_config_mirror}
\end{figure}

For a finitely generated group we say a sequence of elements $(g_n)_{n \in \NN}$ is recursive if there is a Turing machine which on input $n$ produces a word $w \in S^*$ such that $w =_G g_n$. If the Turing machine uses oracle $\OO$ then the sequence is said to be recursive with oracle $\OO$.

\begin{lemma}\label{lemma_secuencias_recursivas_disjuntas}
	For every infinite group $G$ there exist a pair of recursive sequences $(g_n)_{n \in \NN}$, $(h_n)_{n \in \NN}$ with oracle $\WP(G)$ such that the family of sets $$\mathcal{S} = \{\{1_G\}\} \cup \{g_nB_n\}_{n \in \NN} \cup \{h_nB_n\}_{n \in \NN}$$ is pairwise disjoint.
\end{lemma}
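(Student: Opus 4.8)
The plan is to build the two sequences greedily, using only two facts about $G$: every ball $B_n$ is finite (there are at most $|S|^{n}$ words of length at most $n$), whereas $G$ itself is infinite, so for every radius $r$ the complement $G \setminus B_r$ is nonempty and elements of arbitrarily large norm exist. The key geometric observation is that a translated ball $gB_n$ is contained in $G \setminus B_r$ whenever $|g| > r + n$, since every element of $gB_n$ has norm at least $|g| - n$. This is what drives the induction.

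I would construct the interleaved family $\{1_G\}, g_0B_0, h_0B_0, g_1B_1, h_1B_1, \dots$ while maintaining the invariant that, after finitely many sets have been placed, their union is contained in a single ball $B_R$. Start with $R = 0$, so the only placed set is $\{1_G\} = B_0$. At stage $n$, having placed everything inside $B_R$, choose $g_n$ to be any element with $|g_n| > R + n$; then $g_nB_n \subseteq G \setminus B_R$ is disjoint from every previously placed set, and one updates $R$ to $|g_n| + n$ so that $g_nB_n \subseteq B_R$ and the invariant is restored. Repeating the same step picks $h_n$ with $|h_n| > R+n$ and again enlarges $R$. Such elements exist at every stage because $B_{R+n}$ is finite while $G$ is infinite. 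By induction each newly placed ball lies outside the ball containing all earlier ones, so the whole family $\mathcal{S}$ is pairwise disjoint; in particular each $g_nB_n$ and $h_nB_n$ avoids $1_G \in B_0 \subseteq B_R$.

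It remains to make this recursive with oracle $\WP(G)$. With this oracle one can decide, for any two words $u,v \in S^*$, whether $u =_G v$, by querying whether $uv^{-1} \in \WP(G)$. Hence the norm $|w|$ of the element represented by a word $w$ is computable: it is the least $k \le |w|$ for which some of the finitely many words of length $k$ equals $w$ in $G$, a bounded search. Enumerating all words of $S^*$ in shortlex order then yields, with the oracle, an effective enumeration of the elements of $G$ by nondecreasing norm together with their norms. A machine with oracle $\WP(G)$ can thus carry out the deterministic construction above on input $n$, recomputing $g_0, h_0, \dots, g_n$ from scratch by scanning the enumeration at each stage for the first element whose norm exceeds the current threshold $R + n$ and updating $R$, and finally output a word representing $g_n$; an identical machine outputs $h_n$. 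The successive thresholds strictly increase, so the chosen elements are automatically distinct.

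The construction itself is elementary; the only delicate point, which I expect to be the substantive part, is this last one: verifying that the bare oracle $\WP(G)$ (rather than full decidability of the word problem) already suffices to compute norms and to search effectively for a center of sufficiently large norm, together with the observation that the greedy choice never fails precisely because $G$ is infinite.
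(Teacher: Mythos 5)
Your proof is correct, but it ensures disjointness by a different mechanism than the paper. The paper's proof fixes, in advance, an explicit radius $N = 1+2n(n+2)$ (essentially the sum of the diameters of all the sets to be placed), uses the oracle to build the ball $B_N$ of the Cayley graph, and then runs a first-fit greedy packing \emph{inside} $B_N$: it scans words lexicographically, marks cells of $B_N$ as used or free, and places each translate $wB_k$ in the first position where all its cells are free. Disjointness is thus verified cell by cell, termination is immediate since the search space is finite, and the substantive point is that the precomputed bound $N$ suffices for the packing to succeed. You instead dispense with any a priori bound: you keep a growing radius $R$ containing everything placed so far and choose each new center with norm exceeding $R+n$, so that the new translate lies entirely outside $B_R$; disjointness then follows from the triangle inequality rather than from bookkeeping, and what needs justification is termination of the (unbounded but halting) search, which you correctly reduce to the existence of elements of arbitrarily large norm in an infinite finitely generated group. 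The trade-off is real: your radial-separation argument makes disjointness trivial and avoids the packing estimate that the paper justifies only in one terse sentence, while the paper's version confines all oracle computation to an explicitly bounded region, giving a more quantitative and self-evidently terminating algorithm. Both proofs handle the determinism issue identically (a fixed shortlex order and recomputation from scratch guarantee that the machines for $(g_n)$ and $(h_n)$ reproduce the same values on every input), and both use the oracle $\WP(G)$ only through equality tests $uv^{-1}=_G 1_G$, so your observation that this weak use of the oracle suffices matches the paper's.
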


\begin{proof}
	Fix a total order on $S$ and extend it to a lexicographic order in $S^*$. Let $T_g,T_h$ be the Turing machines with oracle $\WP(G)$ that do the following on entry $n \in \NN$.
	\begin{itemize}
		\item Let $N = 1+2\sum_{k = 1}^{n}(2k+1) = 1+2n(n+2)$. Solve the word problem for every $w \in S^*$ such that $|w| \leq 2N$. This allows to construct $B_N$ of the Cayley graph $\Gamma(G,S)$.
		\item Assign the value $0$ to every $g \in B_N\setminus \{1_G\}$, and $1$ to $1_G$. Assign initially the value $g_0,\dots,g_n,h_0,\dots,h_n$ to $\epsilon$. And initiate a variable $k$ with its value set initially to $0$.
		\item While $k \leq n$ do the following: Iterate over all $w \in S^*$ lexicographically. If for $w$ all of the values of $wB_k$ have the value $0$ then: 
		\begin{itemize}
			\item Turn all of the values in $wB_k$ to $1$.
			\item if $g_k = \epsilon$ set $g_k = w$.
			\item otherwise, set $h_k = w$ and assign $k \leftarrow k+1$.
		\end{itemize}
		\item For the machine $T_g$ return $g_n$, for $T_h$ return $h_n$.
	\end{itemize}
	
	As $G$ is infinite and finitely generated there exist elements of arbitrary length. Therefore the bound $N$ suffices to construct all these disjoint balls: Indeed, it is the sum of the diameters of the considered sets. Moreover, as the lexicographic order is fixed beforehand this algorithm will always produce the same values, therefore it gives a recursive enumeration of the desired sets.\end{proof}

\begin{theorem}
\label{theorem.amenable_sym_non_sofic}
Let $G$ be an infinite amenable group. Then there exists a $G$-effectively closed subshift which is not sofic.
\end{theorem}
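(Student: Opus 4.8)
The plan is to build a group analogue of the mirror shift directly from a computable Følner sequence, using a single marker symbol to anchor the matching, and then to derive non-soficity from a Følner counting argument exactly as in the $\ZZ^2$ picture of Figure~\ref{figure.example_config_mirror}.

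First I would fix the combinatorial skeleton. Since $G$ is amenable, with access to the oracle $\WP(G)$ one can search through finite lists of words and test the Følner condition, producing an increasing sequence of finite sets $(F_n)_{n\in\NN}$ with $1_G\in F_n$ and $|SF_n \triangle F_n|\le \frac{1}{n}|F_n|$, each given explicitly as a list of elements of $S^*$. Mimicking the greedy, lexicographic placement in the proof of Lemma~\ref{lemma_secuencias_recursivas_disjuntas} (but thickening the $n$-th set by a growing radius $n$ instead of using the balls $B_n$), I would obtain sequences $(g_n),(h_n)$, recursive with oracle $\WP(G)$, such that $\{1_G\}\cup\{g_nF_n^{+n}\}_n\cup\{h_nF_n^{+n}\}_n$ is pairwise disjoint; the growing thickenings guarantee that for every fixed radius $w$ and all large $n$ the sets $g_nF_n^{+w}$, $h_nF_n^{+w}$ and $B_w$ are still disjoint.

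Next I would define the subshift $X\subset\{0,1,\star\}^G$ by two families of forbidden patterns: the patterns $\{(\epsilon,\star),(w,\star)\}$ for all $w\neq_G 1_G$, which force at most one $\star$ exactly as for $X_{\leq 1}$; and, for every $n$, every $f\in F_n$ and every pair $a\neq a'$ in $\{0,1\}$, the pattern placing $\star$ at $1_G$, $a$ at $g_nf$ and $a'$ at $h_nf$, which forces $x_{pg_nf}=x_{ph_nf}$ whenever a $\star$ sits at $p$. All of these can be listed by a machine with oracle $\WP(G)$ (it computes $F_n,g_n,h_n$ and forms the words $g_nf, h_nf$, and uses the oracle to enumerate the $w\neq_G1_G$), so $X$ is $G$-effectively closed. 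Note that every $P\in\{0,1\}^{F_n}$ is realized by the configuration carrying $\star$ at $1_G$, content $P$ on both $g_nF_n$ and $h_nF_n$, and $0$ elsewhere, giving $2^{|F_n|}$ admissible patterns that differ on $g_nF_n$.

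Finally, for non-soficity I would suppose a factor $\phi\colon Y\twoheadrightarrow X$ from an SFT $Y$ with coding radius $r$ and forbidden-pattern radius $R$, and set $w=r+R$. For each $P$ as above pick a preimage $\tilde x^P\in Y$ and record its restriction to the collar $C_n=g_n\big(F_n^{+(r+R)}\setminus F_n^{+r}\big)$. Amenability gives $|C_n|=o(|F_n|)$, so the number $|\ag_Y|^{|C_n|}=2^{o(|F_n|)}$ of collar patterns is eventually smaller than $2^{|F_n|}$; by pigeonhole two patterns $P\neq P'$ yield preimages agreeing on $C_n$. Gluing $\tilde x^{P'}$ on $g_nF_n^{+r}$ to $\tilde x^{P}$ outside $g_nF_n^{+(r+R)}$ (the width-$R$ agreement on $C_n$ makes every $R$-window locally equal to one of the two valid configurations, so the glued point lies in $Y$) produces $\tilde z\in Y$ whose image carries $\star$ at $1_G$, content $P'$ on $g_nF_n$ but content $P$ on $h_nF_n$; since $P\neq P'$ this violates the matching rule, so $\phi(\tilde z)\notin X$, a contradiction. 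The main obstacle I expect is the first paragraph: producing a Følner sequence and disjoint translates that are simultaneously computable with the oracle and spaced so that a constant-width collar around $g_nF_n$ stays disjoint from the marker and from $h_nF_n$. The counting is then forced by the Følner condition, and the gluing step is the routine local argument depicted in Figure~\ref{figure.example_config_mirror}.
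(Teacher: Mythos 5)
Your proposal follows the same core strategy as the paper's proof: a unique marker symbol whose presence forces two families of disjoint, oracle-recursively placed finite regions to carry identical contents, followed by a F{\o}lner counting, pigeonhole and gluing argument applied to a hypothetical SFT extension. The genuine structural difference is \emph{where} amenability enters. The paper defines its subshift $Y$ on the balls $B_n$ (via Lemma~\ref{lemma_secuencias_recursivas_disjuntas}, which works in any infinite group) and invokes amenability only inside the non-soficity argument, where a single F{\o}lner set $F$ is chosen abstractly and slid into some ball $g_nB_n$; no computability of F{\o}lner sets is ever needed. You instead build the subshift directly on a F{\o}lner sequence, which obliges you to prove the extra (true, and correctly justified) fact that such a sequence is computable with oracle $\WP(G)$ by exhaustive search, since testing the F{\o}lner condition on a finite list of words is decidable with the oracle. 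What your version buys is a self-contained counting step, $|C_n| = o(|F_n|)$ coming directly from the F{\o}lner property of the very sets carrying the constraints; what the paper's version buys is a cleaner separation of concerns (the subshift is canonical, amenability is used purely non-constructively). Your bookkeeping with a coding radius $r$ in place of the paper's reduction to a $1$-block code is equally valid.

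Two details need repair before the argument closes. First, your collar is too thin: with forbidden patterns of radius $R$, a window $gB_R$ centered in a width-$R$ collar can simultaneously meet $g_nF_n^{+r}$ (where the glued point equals $\tilde{x}^{P'}$) and the complement of $g_nF_n^{+(r+R)}$ (where it equals $\tilde{x}^{P}$), and such a mixed window need not be admissible. Take instead the collar $g_n\bigl(F_n^{+(r+2R)}\setminus F_n^{+r}\bigr)$ of width $2R$: then every window meeting $g_nF_n^{+r}$ lies inside $g_nF_n^{+(r+2R)}$, where the glued point agrees with $\tilde{x}^{P'}$, and every other window lies where it agrees with $\tilde{x}^{P}$. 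The asymptotics $|C_n|=o(|F_n|)$ are unaffected. Second, mind the sidedness: since shifted pattern supports are sets of the form $gB_R$ and the metric thickening is $F_nB_w$ (right multiplication), the estimate $|F_nB_w\setminus F_n|=o(|F_n|)$ requires the \emph{right} F{\o}lner condition $|F_ns\,\triangle\,F_n|\le \tfrac{1}{n}|F_n|$ for $s \in S$, not $|SF_n\,\triangle\,F_n|\le \tfrac{1}{n}|F_n|$ as written; a given left F{\o}lner set need not be right F{\o}lner, but right F{\o}lner sets exist in any amenable group and are found by the same oracle search. With these adjustments your proof is correct.
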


\begin{proof}
	Let $(g_n)_{n \in \NN}$, $(h_n)_{n \in \NN}$ be recursive sequences with oracle $\WP(G)$ as in Lemma~\ref{lemma_secuencias_recursivas_disjuntas}, and consider the subshift $Y \subset \{0,1,2\}^{G}$ defined as $Y = Y_1 \cap Y_2$ where:
	$$ Y_1 = \{ y \in \{0,1,2\}^{G} \mid 2\in \{y_g,y_h\} \implies g = h \}$$
	$$ Y_2 = \{ y \in \{0,1,2\}^{G} \mid y_g = 2 \implies \forall n \in \NN, \sigma_{g_n^{-1}g^{-1}}(y)|_{B_n} = \sigma_{h_n^{-1}g^{-1}}(y)|_{B_n} \}$$
	
It is clear these two sets are closed and shift-invariant, thus $Y$ is a subshift. Moreover, they are both $G$-effectively closed subshifts: $Y_1$ is defined by all pattern codings which contain a pair $(w_1,2), (w_2,2)$ such that $w_1 \neq_G w_2$ and $Y_2$ by all pattern codings which contain a triple $(w_1,2), (w_2,a),(w_3,b)$ with $a \neq b$ for which there exists $n \in \NN$ and $h \in B_n$ such that $w_2 =_G w_1g_nh$ and $w_3 =_G w_1h_nh$. As the sequences are recursive with $\WP(G)$ as oracle this is an effectively enumerable set with oracle $\WP(G)$. As the class of $G$-effectively closed subshifts is closed under intersections we obtain that $Y$ is $G$-effectively closed.

We are going to show that $Y$ is not sofic. As $G$ is amenable (see~\cite{ceccherini-SilbersteinC09}), for each $\varepsilon > 0$ and finite $K \subset G$ there exists a non-empty finite set $F \subset G$ such that:
$$\forall k \in K \text{,    }\frac{|F \setminus Fk|}{|F|} < \varepsilon$$

Suppose $Y$ is sofic, then there exists an SFT $X \subset \bg^G$ and a factor code $\phi: X \twoheadrightarrow Y$. Without loss of generality one can suppose that $\phi$ is a 1-block code, that is, it is defined by a local rule $\Phi: \bg \to \ag$. Indeed, if this was not the case, and $\Phi : \bg^F \to \ag$ for $F \neq \{1_G\}$ we can find a conjugated version of $X$ over the alphabet $\widetilde{\bg} := \bg^F$ which is given by the conjugacy $\widetilde{\phi}: X \to \widetilde{X}$ such that $\widetilde{\phi}(x)_g = \sigma_{g^{-1}}(x)|_F$. As being SFT is a conjugacy invariant we can choose without loss of generality $\widetilde{X}$ as the extension.

Let $K$ be the union of the supports of $p \in \FF$ where $X = X_{\FF}$ and $|\FF| < \infty$,  $\varepsilon = \frac{log(2)}{|K|log(|\bg|)}$ and for simplicity denote $\partial_K F = F \setminus \bigcap_{k \in K}Fk$. We obtain that there is $F$ such that: $$ \frac{|\partial_K F|}{|F|} \leq \sum_{k \in K} \frac{|F \setminus Fk|}{|F|} < |K|\frac{log(2)}{|K|log(|\bg|)} = \frac{log(2)}{log(|\bg|)}$$

Note that the previous property is invariant by translation, that is, if $F$ satisfies this property, then $gF$ also does for each $g \in G$. By choosing a large enough $n \in \NN$ such that $F \subset B_n$, then $g_nF \subset g_nB_n$.

Putting everything together, we can find a set $F$ such that $|\bg|^{|\partial F|} < 2^{|F|}$ and there exists $n \in \NN$ such that $1_G \notin g_nF$, $g_nF \subset g_nB_n$ and $g_nF \cap h_nB_n = \emptyset$.

Consider the set of patterns: $$\mathcal{P} = \{p : \{1_G\} \cup g_nF \to \{0,1,2\} \mid p_{1_G} = 2, \forall h \in g_nF : p_h \in \{0,1\} \}$$ Clearly $|\mathcal{P}| = 2^{|F|}$. As $g_nF \subset g_nB_n$ then for each $p \in \mathcal{P}$, $[p]_{1_G} \cap Y \neq \emptyset$. Let $y^p \in [p]_{1_G} \cap Y$ and $x^p \in X$ such that $\phi(x^p)=y^p$. As $|\bg|^{|\partial F|}<2^{|F|}$ by pigeonhole principle there are $x^{p_1} \neq x^{p_2}$ such that $x^{p_1}|_{g_n\partial F} = x^{p_2}|_{g_n\partial F}$.

By definition of $K$ we obtain that $\widetilde{x} \in X$ where $\widetilde{x}$ is the configuration defined as $\widetilde{x}|_{F}= x^{p_1}|_{F}$ and $\widetilde{x}|_{G \setminus F}= x^{p_2}|_{G \setminus F}$. As $\phi$ is a 1-block code we get that $\phi(\widetilde{x})|_{F}= y^{p_1}|_{F}$ and $\phi(\widetilde{x})|_{G \setminus F}= y^{p_2}|_{G \setminus F}$. Consider $\bar{g} \in B_n$ such that $(y^{p_1})_{g_n
\bar{g}} \neq (y^{p_2})_{g_n\bar{g}}$. Then:  

$$\phi(\widetilde{x})_{{h_n\bar{g}}} = (y^{p_2})_{h_n\bar{g}} = (y^{p_2})_{g_n\bar{g}}$$
$$\phi(\widetilde{x})_{{g_n\bar{g}}} = (y^{p_1})_{g_n\bar{g}}$$
Therefore $\phi(\widetilde{x})_{{h_n\bar{g}}} \neq \phi(\widetilde{x})_{{g_n\bar{g}}}$ but $\phi(\widetilde{x})_{1_G} = 2$ which means that $\phi(\widetilde{x}) \notin Y$. \end{proof}

In particular, this theorem gives a negative answer in the case of $BS(1,2)$ which is solvable and thus amenable.

\begin{definition}
The \emph{number of ends} $e(G)$ of the group $G$ is the limit as $n$ tends to infinity of the number of infinite connected components of $\Gamma(G,S) \setminus B_n$.
\end{definition}

The number of ends is a quasi-isomorphism invariant and thus it does not depend on the choice of $S$. It is also known that for a finitely generated group~$G$ then $e(G) \in \{0,1,2,\infty\}$. Stallings theorem about ends of groups \cite{STALLINGS1968} gives a constructive characterization of the groups satisfying $e(G) \geq 2$. In particular we have $e(G)=2$ if and only if $G$ is infinite and virtually cyclic.


\begin{theorem}\label{theorem.more_two_ends_stricly_sofic}
Let $G$ be a finitely generated group where $e(G) \geq 2$. Then there are $G$-effectively closed subshifts which are not sofic.
\end{theorem}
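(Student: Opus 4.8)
The plan is to reuse the mirror-shift template from the proof of Theorem~\ref{theorem.amenable_sym_non_sofic} -- a subshift over $\{0,1,2\}$ with a unique special symbol $2$ that, wherever it occurs, forces two far-apart regions to carry identical data -- and to replace the amenability (F\o{}lner) counting by the finiteness of the separating set that comes from the ends. Concretely, since $e(G)\geq 2$ I would first fix $n$ such that $\Gamma(G,S)\setminus B_n$ has two distinct infinite connected components $C_1,C_2$. Using an oracle to $\WP(G)$ one can compute balls of $\Gamma(G,S)$, and hence, in the same spirit as Lemma~\ref{lemma_secuencias_recursivas_disjuntas}, produce sequences $(a_i)_{i\in\NN}$ and $(b_i)_{i\in\NN}$, recursive with oracle $\WP(G)$, enumerating $C_1$ and $C_2$ respectively (for instance by a breadth-first search started inside each component, only ever stepping to neighbours outside $B_n$, which automatically remain in the same component). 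In particular $|a_i|,|b_i|\to\infty$.

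Next I would define $Y=Y_1\cap Y_2\subset\{0,1,2\}^G$, where $Y_1$ forbids two occurrences of the symbol $2$ and $Y_2$ imposes that whenever $y_g=2$ one has $y_{ga_i}=y_{gb_i}$ for every $i\in\NN$. Exactly as in the amenable case, $Y_1$ is $G$-effectively closed, and $Y_2$ is defined by the pattern codings containing a triple $(w_0,2),(w_1,s),(w_2,t)$ with $s\neq t$ for which there is an $i$ with $w_1=_Gw_0a_i$ and $w_2=_Gw_0b_i$; since $(a_i),(b_i)$ are recursive with oracle $\WP(G)$, this set is recursively enumerable with oracle $\WP(G)$. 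By closure under finite intersections (property~2 of the list in Subsection~\ref{subsection.G_effectiveness}), $Y$ is $G$-effectively closed.

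To prove $Y$ is not sofic I would assume an SFT extension $X=X_{\FF}\subset\bg^G$ with, without loss of generality, a $1$-block factor $\phi$ induced by $\Phi:\bg\to\{0,1,2\}$, and let $r$ be such that every pattern of $\FF$ has support of diameter at most $2r$. The geometric heart of the argument is that the collar $V:=\{\,z\in G: d(z,C_1)\le 2r,\ d(z,G\setminus C_1)\le 2r\,\}$ is contained in $B_{n+2r}$, hence \emph{finite}: any vertex adjacent to $C_1$ but outside it must lie in $B_n$ (two adjacent vertices of $\Gamma(G,S)\setminus B_n$ share a component), so a geodesic witnessing $z\in V$ meets $B_n$ within $2r$ steps. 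For each $d\in\{0,1\}^N$ the configuration with $y_{1_G}=2$, $y_{a_i}=y_{b_i}=d_i$ for the chosen indices and $0$ elsewhere lies in $Y$; lift it to some $x^d\in X$. Choosing the $N$ data indices among those $i$ with $a_i\notin V$ (possible since $a_i\to\infty$) and taking $2^N>|\bg|^{|V|}$, the pigeonhole principle yields $d\neq d'$ with $x^{d}|_V=x^{d'}|_V$. I then set $\tilde x=x^{d}$ on $C_1$ and $\tilde x=x^{d'}$ on $G\setminus C_1$: any window meeting both $C_1$ and its complement lies inside $V$, where the two configurations agree, so $\tilde x\in X$. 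Picking $i^*$ with $d_{i^*}\neq d'_{i^*}$ gives $\phi(\tilde x)_{a_{i^*}}=d_{i^*}$, $\phi(\tilde x)_{b_{i^*}}=d'_{i^*}$ and $\phi(\tilde x)_{1_G}=2$, violating $Y_2$; thus $\phi(\tilde x)\notin Y$, contradicting $\phi(X)=Y$.

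The step I expect to be the main obstacle is isolating the correct finite interface: the whole argument rests on the collar $V$ being finite while the encoded data is unbounded, and this is exactly where the end hypothesis is used, since an end is separated from the remainder of the group by a finite set. This substitute for amenability is what lets the proof run uniformly for groups with two or infinitely many ends; a naive attempt to split $G$ into two thick halves with a finite wall of large diameter would fail for infinitely-ended groups such as free groups, so it is important that I only cut off a single end-component and control its boundary rather than partition the whole group. The remaining technical point, namely that the enumerations of $C_1$ and $C_2$ can be carried out by $\WP(G)$-oracle machines, follows the same pattern already established in Lemma~\ref{lemma_secuencias_recursivas_disjuntas}.
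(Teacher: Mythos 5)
Your proof is correct and is essentially the paper's own argument: the same subshift $Y=Y_1\cap Y_2$ defined from $\WP(G)$-recursive sequences enumerating the two infinite components, the same reduction to a $1$-block SFT extension, and the same pigeonhole-plus-gluing contradiction across the finite region separating $C_1$ from its complement. The only differences are cosmetic: you pigeonhole over finitely many data vectors $d\in\{0,1\}^N$ with $2^N>|\bg|^{|V|}$ where the paper uses all of $\{0,1\}^{\NN}$ against the finitely many patterns on a fixed ball, and your collar $V\subset B_{n+2r}$ tracks the interface slightly more carefully than the paper's cutoff $L=N+M$ (which, taken literally, should be $N+2M$ to guarantee that every window crossing the boundary of $C_1$ lies where the two preimages agree).
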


\begin{proof}

Let $N \in \NN$ such that $\Gamma(G,S) \setminus B_N$ contains at least two different infinite connected components $C_1$ and $C_2$.

Let $(g_i)_{i \in \NN} \subset C_1$ and $(h_i)_{i \in \NN} \subset C_2$ be sequences with no repeated elements. Let $Y\subset \{0,1,2\}^G$ defined as $Y = Y_1 \cap Y_2$ where:
$$ Y_1 = \{ y \in \{0,1,2\}^{G} \mid 2\in \{y_g,y_h\} \implies g = h \}$$
$$ Y_2 = \{ y \in \{0,1,2\}^{G} \mid y_g = 2 \implies \forall n \in \NN, y_{gg_n} = y_{gh_n} \}$$ 

Analogously to the proof of Theorem~\ref{theorem.amenable_sym_non_sofic}, if the sequences are recursive with oracle $\WP(G)$ then $Y$ is effectively closed. We claim such sequences exist.

Fix a total order on $S$ and extend it to a lexicographic order in $S^*$. Let $N$ as above and let $w_0 \in S^*$ such that $w  =_G g_0 \in C_1$. Consider the Turing machines $T_g$ with oracle $\WP(G)$ that  on entry $n \in \NN$:
\begin{itemize}
	\item If $n = 0$ returns $w_0$.
	\item Let $M = N+n+|w_0|$. Solve the word problem for every $w \in S^*$ such that $|w| \leq 2M$. This allows to construct $B_{M}$ of $\Gamma(G,S)$.
	\item Let $H_{g_0}$ be the connected component of $B_M \setminus B_N$ which contains $g_0$.
	\item Assign the value $0$ to every $H_{g_0} \setminus \{w_0\}$. and $1$ to $w_0$. Assign $g_1,\dots,g_n$ to $\epsilon$. And initiate a variable $k$ with its value set initially to $1$.
	\item While $k \leq n$ do the following: Iterate over all $w \in S^*$ lexicographically. If $w_0w$ has the value $0$ and belongs to $H_{g_0}$ then: 
	\begin{itemize}
		\item Turn the value $w_0w$ to $1$.
		\item Assign $g_k = w_0w$ and increase $k$ by $1$.
	\end{itemize}
	\item Return $g_n$.
\end{itemize}

As the component $C_1$ is infinite, the value $M$ suffices to find $n$ different elements. It is clear this machine yields a sequence of distinct elements in component $C_1$. The machine $T_h$ for the sequence in the component $C_2$ is analogous.

Suppose $Y$ is sofic. As in Theorem~\ref{theorem.amenable_sym_non_sofic} we can consider an SFT extension $X \subset \bg^G$ given by a 1-block code $\phi: X \twoheadrightarrow Y$. Let also $M \in \NN$ be a bound such that the union of all the supports of one finite set of forbidden patterns defining $X$ is contained in $B_M$. Let $L = N+M$.

As $G$ is finitely generated $|B_L| < \infty$. Consider thus the finite set $\mathcal{P} = \{ p \in \bg^{B_L} \mid \phi([p]_{1_G})\cap [2]_{1_G} \neq \emptyset \}$. Clearly $|\mathcal{P}| \leq |\bg|^{|B_L|} < \infty$. Consider $w \in \{0,1\}^{\NN}$ and fix $y^w \in \bigcap_{n \in \NN}[w_n]_{g_n} \cap [2]_{1_G}$. Clearly $y^w \in Y$. As there is an infinite number of such $y_w$ there exist $w_1 \neq w_2$ and $x^{w_1},x^{w_2} \in X$ such that $\phi(x^{w_1})= y^{w_1}$ and $\phi(x^{w_2})= y^{w_2}$ and $x^{w_1}|_{B_L} = x^{w_2}|_{B_L}$. 

By definition of $L$ we have that $\widetilde{x} \in X$ where:

$$\widetilde{x}_g =
\left\{
\begin{array}{l}
(x^{w_1})_g \mbox{, if } g \in C_1 \\ 
(x^{w_2})_g \mbox{, if } g \in G \setminus C_1
\end{array}
\right.
$$

Thus $\widetilde{y}=\phi(\widetilde{x})$ satisfies that $\widetilde{y}_{1_g}= 2$, $\widetilde{y}|_{C_1} = (y^{w_1})|_{C_1}$ and $\widetilde{y}|_{C_2} = (y^{w_2})|_{C_2}$. Let $n \in \NN$ such that $(w_1)_n \neq (w_2)_n$ Then: $\widetilde{y}_{g_n} = (y^{w_1})_{g_n}$ and  $\widetilde{y}_{h_n} = (y^{w_2})_{h_n} = (y^{w_2})_{g_n}$. Therefore $\widetilde{y} \notin Y_2$ which implies that $\widetilde{y} \notin Y$ .\end{proof}

\section{$G$-machines}
\label{section.G_machines}

 Classical Turing machines keep their information in a bi-infinite tape, and are only able to work on inputs which are codified in the form of words. While in~$\ZZ$ this a natural model to study subshifts, it becomes cumbersome in general groups as we are forced to introduce pattern codings. Moreover, as we saw in Section~\ref{section.effectiveness}, there is a number of constraints to what can be done with Turing machines when $\WP(G)$ is undecidable, and a general setting forces the use of oracles.

 In this section we introduce an alternative model of computation which we call a $G$-machine. In this model, the tape is replaced by a finitely generated group $G$. These machines receive patterns $p \in \ag_G^*$ as input instead of words and move by using the set $S$ of generators. Similar machines using Cayley graphs as a tape have already been mentioned in~\cite{Gajardo200734} and studied in more detail in~\cite{DaCunha2011}, but these machines take their input as a word in an auxiliary tape and only use the graph as a working tape.
 
 We begin this section by defining $G$-machines and the classes of languages they define. Then we present some robustness results similar to the ones satisfied by classical Turing machines. As the main result of this section, we characterize the class of $G$-effectively closed subshifts as those whose set of forbidden patterns is $G$-recursively enumerable, hence giving a characterization of this class without the use of oracles. We end this section with two applications of these machines: one to the domino problem of general groups (Theorem~\ref{theorem_group_undecidable_DP}) and another in the form of a simulation theorem (Theorem~\ref{Teorema_simulacion}).

\begin{definition}
A \define{$G$-machine} is a 6-tuple $(Q,\Sigma,\sqcup,q_0,Q_F,\delta)$ where $Q$ is a finite set of states, $\Sigma$ is a finite alphabet, $\sqcup \in \Sigma$ is the blank symbol, $q_0 \in Q$ is the initial state, $Q_F \subset Q$ is the set of accepting states and $\delta : \Sigma \times Q \to \Sigma \times Q \times S$ is the transition function.
\end{definition}

As in the case of Turing machines, we can define the action of a Turing machine in two different ways. We call these the fixed head and moving head models.

In the fixed head model, a $G$-machine $T$ acts on the set $\Sigma^G \times Q$ as follows: let $(x,q) \in \Sigma^G \times Q$ and $\delta(x_{1_G},q) = (a,q',s)$. Then $T(x,q) = (\sigma_{s^{-1}}(\widetilde{x}),q')$ where $\widetilde{x}|_{1_G} = a$ and $\widetilde{x}|_{G \setminus \{1_G\}} = x|_{G \setminus \{1_G\}}$. Figure~\ref{figure.simulation_g_machine} illustrates this action when $G$ is a free group. Here the head of the Turing machine is assumed to stay at a fixed position and the tape moves instead.

\begin{figure}[ht]
\begin{bigcenter}
\begin{tikzpicture}[scale=0.65]

\begin{scope}[scale = 0.75, shift={(-5.5,0)},rotate=0]
\def \c{0.5}
\def \b{0.3}
\def \a{0.2}

\filldraw[dashed,fill=blue,fill opacity=0.3] (-1.4,-2/3) -- (0.4,-2/3)-- (1.4,2/3)  -- (-0.4,2/3)  -- cycle;

\draw [->] (0,0) to (-5,0);
\draw [->] (0,0) to (5,0);
\draw [->] (0,0) to (-3,-4);
\draw [->] (0,0) to (3,4);

\node at (1.5,0.3) {$a$};
\node at (0.7,1.6) {$b$};

\draw [->] (-3,0) to (-4,-4/3);
\draw [->] (-3,0) to (-2, 4/3);

\draw [->] (3,0) to (2,-4/3);
\draw [->] (3,0) to (4, 4/3);

\draw [->] (-2,-8/3) to (-11/3,-8/3);
\draw [->] (-2,-8/3) to (-1/3,-8/3);

\draw [->] (2,8/3) to (11/3,8/3);
\draw [->] (2,8/3) to (1/3,8/3);

\draw[fill = black] (0,0) circle (\c);
\draw[fill = black] (-3,0) circle (\b);
\draw[fill = white] (3,0) circle (\b);
\draw[fill = black] (2,8/3) circle (\b);
\draw[fill = white] (-2,-8/3) circle (\b);

\draw [->] (-4,0) to (-3.5,4/6);
\draw [->] (-4,0) to (-4.5,-4/6);
\draw[fill = white] (-4,0) circle (\a);
\node at (-4,0) {\fontsize{0.4}{0.1}\textbf{$\sqcup$}};
\draw [->] (-3.5,-2/3) to (-4.3,-2/3);
\draw [->] (-3.5,-2/3) to (-2.7,-2/3);
\draw[fill = white] (-3.5,-2/3) circle (\a);
\node at (-3.5,-2/3) {\fontsize{0.4}{0.1}\textbf{$\sqcup$}};
\draw [->] (-2.5,2/3) to (-3.3,2/3);
\draw [->] (-2.5,2/3) to (-1.7,2/3);
\draw[fill = white] (-2.5,2/3) circle (\a);
\node at (-2.5,2/3) {\fontsize{0.4}{0.1}\textbf{$\sqcup$}};

\draw [->] (4,0) to (3.5,-4/6);
\draw [->] (4,0) to (4.5,4/6);
\draw[fill = white] (4,0) circle (\a);
\node at (4,0) {\fontsize{0.4}{0.1}\textbf{$\sqcup$}};
\draw [->] (3.5,2/3) to (4.3,2/3);
\draw [->] (3.5,2/3) to (2.7,2/3);
\draw[fill = white] (3.5,2/3) circle (\a);
\node at (3.5,2/3)  {\fontsize{0.4}{0.1}\textbf{$\sqcup$}};
\draw [->] (2.5,-2/3) to (3.3,-2/3);
\draw [->] (2.5,-2/3) to (1.7,-2/3);
\draw[fill = white] (2.5,-2/3) circle (\a);
\node at (2.5,-2/3){\fontsize{0.4}{0.1}\textbf{$\sqcup$}};

\draw [->] (-2.5,-10/3) to (-3.3,-10/3);
\draw [->] (-2.5,-10/3) to (-1.7,-10/3);
\draw[fill = white] (-2.5,-10/3) circle (\a);
\node at (-2.5,-10/3) {\fontsize{0.4}{0.1}\textbf{$\sqcup$}};
\draw [->] (-3,-8/3) to (-3.5,-10/3);
\draw [->] (-3,-8/3) to (-2.5,-6/3);
\draw[fill = white] (-3,-8/3) circle (\a);
\node at  (-3,-8/3){\fontsize{0.4}{0.1}\textbf{$\sqcup$}};
\draw [->] (-1,-8/3) to (-1.5,-10/3);
\draw [->] (-1,-8/3) to (-0.5,-6/3);
\draw[fill = white] (-1,-8/3) circle (\a);
\node at (-1,-8/3){\fontsize{0.4}{0.1}\textbf{$\sqcup$}};

\draw [->] (2.5,10/3) to (3.3,10/3);
\draw [->] (2.5,10/3) to (1.7,10/3);
\draw[fill = white] (2.5,10/3) circle (\a);
\node at (2.5,10/3){\fontsize{0.4}{0.1}\textbf{$\sqcup$}};
\draw [->] (3,8/3) to (3.5,10/3);
\draw [->] (3,8/3) to (2.5,6/3);
\draw[fill = white] (3,8/3) circle (\a);
\node at (3,8/3) {\fontsize{0.4}{0.1}\textbf{$\sqcup$}};
\draw [->] (1,8/3) to (1.5,10/3);
\draw [->] (1,8/3) to (0.5,6/3);
\draw[fill = white] (1,8/3) circle (\a);
\node at (1,8/3) {\fontsize{0.4}{0.1}\textbf{$\sqcup$}};

\draw [->, ultra thick] (-2,3) to (-0.6, 0.6);
\draw[fill = white] (-2,3) circle (0.5);
\draw node (1) at (-2,3) {$q_1$};
\end{scope}

\begin{scope}[scale = 0.75, shift={(5.5,0)},rotate=0]
\def \c{0.5}
\def \b{0.3}
\def \a{0.2}
\begin{scope}[scale = 3/5, shift={(-5,0)},rotate=0]
\filldraw[dashed,fill=blue,fill opacity=0.3] (-1.4,-2/3) -- (0.4,-2/3)-- (1.4,2/3)  -- (-0.4,2/3)  -- cycle;
\end{scope}
\draw [->] (0,0) to (-5,0);
\draw [->] (0,0) to (5,0);
\draw [->] (0,0) to (-3,-4);
\draw [->] (0,0) to (3,4);

\node at (1.5,0.3) {$a$};
\node at (0.7,1.6) {$b$};

\draw [->] (-3,0) to (-4,-4/3);
\draw [->] (-3,0) to (-2, 4/3);

\draw [->] (3,0) to (2,-4/3);
\draw [->] (3,0) to (4, 4/3);

\draw [->] (-2,-8/3) to (-11/3,-8/3);
\draw [->] (-2,-8/3) to (-1/3,-8/3);

\draw [->] (2,8/3) to (11/3,8/3);
\draw [->] (2,8/3) to (1/3,8/3);

\draw[fill = white] (0,0) circle (\c);
\draw[fill = white] (-3,0) circle (\b);
\draw[fill = white] (3,0) circle (\b);
\node at (3,0) {{\footnotesize$\sqcup$}};
\draw[fill = white] (2,8/3) circle (\b);
\node at (2,8/3) {{\footnotesize$\sqcup$}};
\draw[fill = white] (-2,-8/3) circle (\b);
\node at (-2,-8/3) {{\footnotesize$\sqcup$}};

\draw [->] (-4,0) to (-3.5,4/6);
\draw [->] (-4,0) to (-4.5,-4/6);
\draw[fill = black] (-4,0) circle (\a);
\draw [->] (-3.5,-2/3) to (-4.3,-2/3);
\draw [->] (-3.5,-2/3) to (-2.7,-2/3);
\draw[fill = white] (-3.5,-2/3) circle (\a);
\draw [->] (-2.5,2/3) to (-3.3,2/3);
\draw [->] (-2.5,2/3) to (-1.7,2/3);
\draw[fill = black] (-2.5,2/3) circle (\a);

\draw [->] (4,0) to (3.5,-4/6);
\draw [->] (4,0) to (4.5,4/6);
\draw[fill = white] (4,0) circle (\a);
\node at (4,0) {\fontsize{0.4}{0.1}\textbf{$\sqcup$}};
\draw [->] (3.5,2/3) to (4.3,2/3);
\draw [->] (3.5,2/3) to (2.7,2/3);
\draw[fill = white] (3.5,2/3) circle (\a);
\node at (3.5,2/3)  {\fontsize{0.4}{0.1}\textbf{$\sqcup$}};
\draw [->] (2.5,-2/3) to (3.3,-2/3);
\draw [->] (2.5,-2/3) to (1.7,-2/3);
\draw[fill = white] (2.5,-2/3) circle (\a);
\node at (2.5,-2/3){\fontsize{0.4}{0.1}\textbf{$\sqcup$}};

\draw [->] (-2.5,-10/3) to (-3.3,-10/3);
\draw [->] (-2.5,-10/3) to (-1.7,-10/3);
\draw[fill = white] (-2.5,-10/3) circle (\a);
\node at (-2.5,-10/3) {\fontsize{0.4}{0.1}\textbf{$\sqcup$}};
\draw [->] (-3,-8/3) to (-3.5,-10/3);
\draw [->] (-3,-8/3) to (-2.5,-6/3);
\draw[fill = white] (-3,-8/3) circle (\a);
\node at  (-3,-8/3){\fontsize{0.4}{0.1}\textbf{$\sqcup$}};
\draw [->] (-1,-8/3) to (-1.5,-10/3);
\draw [->] (-1,-8/3) to (-0.5,-6/3);
\draw[fill = white] (-1,-8/3) circle (\a);
\node at (-1,-8/3){\fontsize{0.4}{0.1}\textbf{$\sqcup$}};

\draw [->] (2.5,10/3) to (3.3,10/3);
\draw [->] (2.5,10/3) to (1.7,10/3);
\draw[fill = white] (2.5,10/3) circle (\a);
\node at (2.5,10/3){\fontsize{0.4}{0.1}\textbf{$\sqcup$}};
\draw [->] (3,8/3) to (3.5,10/3);
\draw [->] (3,8/3) to (2.5,6/3);
\draw[fill = white] (3,8/3) circle (\a);
\node at (3,8/3) {\fontsize{0.4}{0.1}\textbf{$\sqcup$}};
\draw [->] (1,8/3) to (1.5,10/3);
\draw [->] (1,8/3) to (0.5,6/3);
\draw[fill = white] (1,8/3) circle (\a);
\node at (1,8/3) {\fontsize{0.4}{0.1}\textbf{$\sqcup$}};

\draw [->, ultra thick] (-2,3) to (-0.6, 0.6);
\draw[fill = white] (-2,3) circle (0.5);
\draw node (2) at (-2,3) {$q_2$};
\end{scope}
\path
(1) edge [thick, bend left=30,->] node[swap]  {} (2);

\draw node at (-1.5,4) {$\delta(q_1,\begin{tikzpicture}
\draw[fill = black] (0,0) circle (0.1);
\end{tikzpicture}) = (q_2,\begin{tikzpicture}
\draw[fill = white] (0,0) circle (0.1);
\end{tikzpicture},a)$};

\end{tikzpicture}
\caption{A fixed head transition of an $F_2$-machine.}
\label{figure.simulation_g_machine}
\end{bigcenter}
\end{figure}
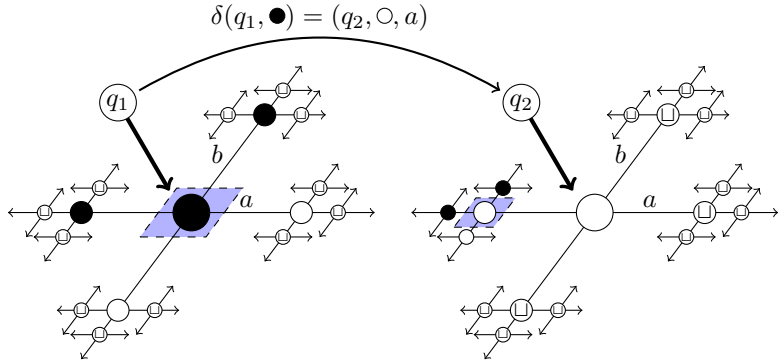

In the moving tape model, a $G$-machine $T$ acts on the set $\Sigma^G \times G \times Q$ as follows: let $(x,g,q) \in \Sigma^G \times G \times Q$ and $\delta(x_{1_G},q) = (a,q',s)$. Then $T(x,g,q) = (\widetilde{x},gs,q')$ where $\widetilde{x}|_{1_G} = a$ and $\widetilde{x}|_{G \setminus \{1_G\}} = x|_{G \setminus \{1_G\}}$. Figure~\ref{figure.simulation_z2_machine} illustrates this action when $G$ is $\ZZ^2$. Here the tape remains fixed and the second coordinate keeps track of the position of the head.

\begin{figure}[ht]
	\begin{bigcenter}
		\begin{tikzpicture}[scale=0.50]

\begin{scope}[scale = 0.75, shift={(-6,0)},rotate=0]
\def \c{0.3}

\draw [<->] (-5,0) to (5,0);
\draw [<->] (-6,-1) to (4,-1);
\draw [<->] (-7,-2) to (3,-2);
\draw [<->] (-4,1) to (6,1);
\draw [<->] (-3,2) to (7,2);

\draw [<->] (3,3) to (-3,-3);
\draw [<->] (3-5/3,3) to (-3-5/3,-3);
\draw [<->] (3-10/3,3) to (-3-10/3,-3);
\draw [<->] (3+5/3,3) to (-3+5/3,-3);
\draw [<->] (3+10/3,3) to (-3+10/3,-3);

\foreach \i in {-10/3,-5/3,0,5/3,10/3}{
	\foreach \j in {-2,-1,0,1,2}{
		\draw[fill = white] (\i+\j,\j) circle (\c);
	}
}


\filldraw[thick,dashed,fill=blue,fill opacity=0.3] (-5/6-0.5,-0.5) -- (-5/6+0.5,0.5) -- (5/6+0.5,0.5)  -- (5/6-0.5,-0.5)  -- cycle;

\draw[fill = white] (-10/3,0) circle (\c);
\draw[fill = black!75] (-5/3,0)circle (\c);
\draw[fill = black!75] (0,0)circle (\c);
\draw[fill = white] (5/3,0) circle (\c);
\draw[fill = white] (10/3,0) circle (\c);

\draw[fill = white] (-10/3+1,1) circle (\c);
\draw[fill = white] (-5/3+1,1)circle (\c);
\draw[fill = black!75] (1,1)circle (\c);
\draw[fill = black!75] (5/3+1,1) circle (\c);
\draw[fill = white] (10/3+1,1) circle (\c);

\draw[fill = white] (-10/3-1,-1) circle (\c);
\draw[fill = black!75] (-5/3-1,-1)circle (\c);
\draw[fill = black!75] (-1,-1)circle (\c);
\draw[fill = black!75] (5/3-1,-1) circle (\c);
\draw[fill = white] (10/3-1,-1) circle (\c);

\foreach \i in {-10/3,10/3}{
	\foreach \j in {-2,-1,0,1,2}{
		\node at (\i+\j,\j) {\fontsize{0.4}{0}\textbf{$\sqcup$}};
	}
}

\foreach \i in {-5/3,0,5/3}{
	\foreach \j in {-2,2}{
		\node at (\i+\j,\j) {\fontsize{0.4}{0}\textbf{$\sqcup$}};
	}
}

\draw [->, ultra thick] (-2,3) to (-0.3, 0.3);
\draw[fill = white] (-2,3) circle (0.5);
\draw node (1) at (-2,3) {$q_1$};

\end{scope}

\begin{scope}[scale = 0.75, shift={(6,0)},rotate=0]
\def \c{0.3}

\draw [<->] (-5,0) to (5,0);
\draw [<->] (-6,-1) to (4,-1);
\draw [<->] (-7,-2) to (3,-2);
\draw [<->] (-4,1) to (6,1);
\draw [<->] (-3,2) to (7,2);

\draw [<->] (3,3) to (-3,-3);
\draw [<->] (3-5/3,3) to (-3-5/3,-3);
\draw [<->] (3-10/3,3) to (-3-10/3,-3);
\draw [<->] (3+5/3,3) to (-3+5/3,-3);
\draw [<->] (3+10/3,3) to (-3+10/3,-3);

\filldraw[thick,dashed,fill=blue,fill opacity=0.3] (-5/6-0.5,-0.5) -- (-5/6+0.5,0.5) -- (5/6+0.5,0.5)  -- (5/6-0.5,-0.5)  -- cycle;

\foreach \i in {-10/3,-5/3,0,5/3,10/3}{
	\foreach \j in {-2,-1,0,1,2}{
		\draw[fill = white] (\i+\j,\j) circle (\c);
	}
}

\draw[fill = white] (-10/3,0) circle (\c);
\draw[fill = black!75] (-5/3,0)circle (\c);
\draw[fill = white] (5/3,0) circle (\c);
\draw[fill = white] (10/3,0) circle (\c);

\draw[fill = white] (-10/3+1,1) circle (\c);
\draw[fill = white] (-5/3+1,1)circle (\c);
\draw[fill = black!75] (1,1)circle (\c);
\draw[fill = black!75] (5/3+1,1) circle (\c);
\draw[fill = white] (10/3+1,1) circle (\c);

\draw[fill = white] (-10/3-1,-1) circle (\c);
\draw[fill = black!75] (-5/3-1,-1)circle (\c);
\draw[fill = black!75] (-1,-1)circle (\c);
\draw[fill = black!75] (5/3-1,-1) circle (\c);
\draw[fill = white] (10/3-1,-1) circle (\c);

\foreach \i in {-10/3,10/3}{
	\foreach \j in {-2,-1,0,1,2}{
		\node at (\i+\j,\j) {\fontsize{0.4}{0}\textbf{$\sqcup$}};
	}
}

\foreach \i in {-5/3,0,5/3}{
	\foreach \j in {-2,2}{
		\node at (\i+\j,\j) {\fontsize{0.4}{0}\textbf{$\sqcup$}};
	}
}

\draw [->, ultra thick] (-2+5/3,3) to (-0.3+5/3, 0.3);
\draw[fill = white] (-2+5/3,3) circle (0.5);
\draw node (2) at (-2+5/3,3) {$q_2$};
\end{scope}
\path
(1) edge [thick, bend left=20,->] node[swap]  {} (2);
\draw node at (-1.5,4) {$\delta(q_1,\begin{tikzpicture}
	\draw[fill = black] (0,0) circle (0.1);
	\end{tikzpicture}) = (q_2,\begin{tikzpicture}
	\draw[fill = white] (0,0) circle (0.1);
	\end{tikzpicture},(1,0))$};

\end{tikzpicture}
		\caption{A moving head transition of a $\ZZ^2$-machine.}
		\label{figure.simulation_z2_machine}
	\end{bigcenter}
\end{figure}
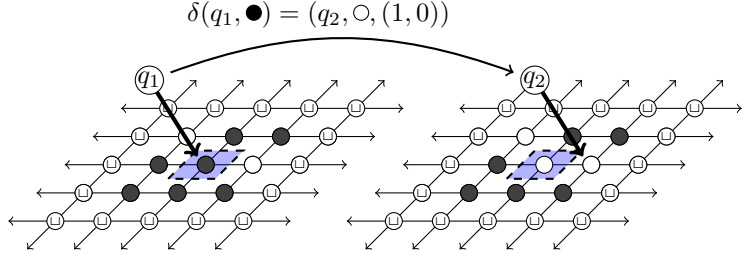

Let $F \subset G$ be a finite set and $p \in \Sigma^F$. Let $x^p \in \Sigma^G$ be the configuration such that $(x^p)|_F = p$ and $(x^p)|_{G \setminus F} \equiv \sqcup$. We say that $T$ \define{accepts} $p$ if there is $n \in \NN$ such that $T^n(x^p,q_0) \in \Sigma^G \times Q_F$ in the fixed head model or equivalently $T^n(x^p,1_G,q_0) \in \Sigma^G \times G \times Q_F$ in the moving head model. $L \subset \Sigma_G^*$ is \define{$G$-recursively enumerable} if there exists a $G$-machine $T$ which accepts $p \in \Sigma_G^*$ if and only if $p \in L$. If both $L$ and $\Sigma_G^* \setminus L$ are $G$-recursively enumerable we say $L$ is \define{$G$-decidable}.

So far we have defined these machines using a fixed set of generators $S$. In the next proposition we show that the languages defined by such machines do not depend of this arbitrary choice.

\begin{proposition}
Let $S,S'$ be finite subsets of $G$ such that $\langle S \rangle = \langle S' \rangle = G$. Let $L \subset \ag_G^*$ be recursively enumerable using $S'$ as the movement set. Then $L$ is recursively enumerable using $S$.
\end{proposition}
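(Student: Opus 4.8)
The plan is to simulate the given $S'$-machine by an $S$-machine that replaces each single $S'$-move by a bounded sequence of $S$-moves tracing out a fixed word over $S$ that represents the corresponding generator. Let $T'=(Q,\Sigma,\sqcup,q_0,Q_F,\delta')$ be a $G$-machine with movement set $S'$ recognizing $L$. Since $\langle S\rangle=G$, every $s'\in S'$ is a group element, so we may fix once and for all a word $w_{s'}=s'_1 s'_2\cdots s'_{k(s')}\in S^{*}$ with $w_{s'}=_G s'$; when $s'=_G 1_G$ we simply take $w_{s'}=1_G$, which is legitimate because $S$ contains the identity. These finitely many words are chosen arbitrarily and never recomputed, so no information about $\WP(G)$ is needed: the whole argument is a purely combinatorial change of movement alphabet.

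First I would describe the state set and transitions of the new machine $T$ with movement set $S$. I keep the states of $Q$ as \emph{macro} states and add \emph{auxiliary} states of the form $(q',u)$, where $q'\in Q$ and $u$ is a proper suffix of some $w_{s'}$; there are finitely many of these since $S'$ and the chosen words are finite. The transitions are defined as follows. If $T$ is in a macro state $q$ reading $a$ and $\delta'(a,q)=(b,q',s')$ with $w_{s'}=s'_1\cdots s'_k$, then $T$ writes $b$, moves by $s'_1$, and enters the auxiliary state recording the remaining suffix $s'_2\cdots s'_k$ together with target $q'$ (if $k=1$ it enters the macro state $q'$ directly). In an auxiliary state $(q',\,s'_j\cdots s'_k)$, reading any symbol $c$, the machine rewrites $c$ \emph{unchanged}, moves by $s'_j$, and passes to $(q',\,s'_{j+1}\cdots s'_k)$, finally landing in the macro state $q'$ after executing the last letter $s'_k$. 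The accepting set of $T$ is exactly $Q_F$; auxiliary states are never accepting.

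The correctness rests on one observation about the shift action. After $T$ performs the moves $s'_1,\dots,s'_k$ in sequence, the tape has been shifted by $\sigma_{(s'_1\cdots s'_k)^{-1}}=\sigma_{{s'}^{-1}}$, because $w_{s'}=_G s'$ and the action $\sigma$ depends only on the group element, not on its word representative. Hence this block of $k$ steps reproduces \emph{exactly} the displacement of the single $s'$-transition of $T'$, even though an arbitrary word $w_{s'}$ was fixed. Rewriting symbols unchanged at the intermediate cells guarantees that the tape is altered only at the cell where the genuine $T'$-transition writes. An induction on the number of simulated $T'$-steps then shows that $T$ passes through macro-state configurations agreeing with those of $T'$: if $T'^n(x^p,q_0)=(y,q)$ then $T^m(x^p,q_0)=(y,q)$ for the appropriate $m=\sum_{i<n}k(s'^{(i)})$. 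Consequently $T$ enters a state of $Q_F$ if and only if $T'$ does, so $T$ accepts $p$ iff $T'$ accepts $p$, and $T$ recognizes $L$ using $S$.

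The only delicate points are bookkeeping ones, and I expect them to be the main obstacle to writing the argument cleanly rather than a conceptual difficulty: one must verify that the auxiliary states suffice to carry the head along $w_{s'}$ without corrupting the tape (handled by the copy-back rule) and that the state set stays finite (handled by the finiteness of $S'$ and of the fixed words, giving finitely many suffixes $u$). The symmetric statement, with the roles of $S$ and $S'$ exchanged, gives the equivalence of the two notions, so $G$-recursive enumerability is independent of the chosen generating set.
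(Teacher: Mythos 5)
Your proposal is correct and follows essentially the same approach as the paper: both replace each $S'$-move by a fixed word over $S$ representing that generator, executed via finitely many auxiliary states that copy tape symbols unchanged, with correctness resting on the fact that the shift action depends only on the group element and not on its word representative. The only differences are cosmetic bookkeeping (the paper indexes intermediate states by $(s',s_i)$ and ends with an extra move by $1_G$, while you index by suffixes and enter the target state on the last letter).
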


\begin{proof}
Let $T_{S'}$ be a $G$-machine using $S'$ as the movement set recognizing $L$. As $\langle S \rangle = G$ each $s' \in S'$ can be written as $s' = s_1\dots s_{n(s')}$ where every $s_i \in S$. Consider $T_{S}$ a copy of $T_{S'}$ where for each state $q \in Q$ we add a copy $q_{s',s_i}$ for $s' \in S$ and $i \in \{1,\dots,n(s')\}$, and every instruction $\delta(a,q) = (b,r,s')$ in $T_{S'}$ is replaced with the instructions: 

\begin{itemize}
	\item $\delta(a,q)=(b,r_{s',s_1},s_1)$
	\item $\forall a \in \Sigma$ and $1 \leq i < n(s)$, $\delta(a,r_{s',s_i})= (a,r_{s',s_{i+1}},s_{i+1})$
	\item $\forall a \in \Sigma$, $\delta(a,r_{s',s_{n(s')}}) = (a,r,1_G)$.
\end{itemize}

The modified machine $T_{S}$ moves with the set of generators $S$ and acceps the same patterns as $T_{S'}$.\end{proof}

The class of $G$-machines shares also the robustness of Turing machines with respect to slight changes in its definition. For example, we can allow multiple tapes with multiple independent writing heads. We shall briefly and informally define this model as it will be used as a tool in a proof later on.

A \define{multiple head $G$-machine} is the same as a $G$-machine, except that the machine uses $G^n$ as a tape and the transition function is $\delta :  \Sigma^n \times Q^n \to \Sigma^n \times Q^n \times S^n$, where $n$ is the number of heads of the machine. The action of this machines is defined analogously as before in either the moving head or moving tape model. It \define{accepts} a pattern $p \in\ag_G^*$ if starting from the initial configuration $( (x^p,\sqcup^G,\dots, \sqcup^G), (q_0,\dots, q_0))$ the machine reaches in a finite number of steps a configuration with an accepting state in $Q_F$ in one of the coordinates.

In these machines each head works on its own tape, but can ``read'' the content of other tapes. By codifying independent movements of a tape accordingly, it is able to read not only what each head is looking at a certain step but what is written in an arbitrary finite portion of the other tapes. 

\begin{proposition}
\label{proposition.multiple_heads_Gmachine}
Let $L \subset \Sigma_G^*$. There exists a multiple head $G$-machine which accepts exactly patterns $p \in L$ if and only if $L$ is $G$-recursively enumerable.
\end{proposition}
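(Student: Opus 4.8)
The plan is to prove both implications, the reverse one being immediate and the forward one requiring a simulation argument analogous to the classical equivalence between multi-tape and single-tape Turing machines. For the easy direction, a $G$-machine is literally a multiple head $G$-machine with $n = 1$, so if $L$ is $G$-recursively enumerable then there is already a multiple head $G$-machine accepting exactly the patterns of $L$. The content of the statement is therefore the converse: given a multiple head $G$-machine $M$ with $n$ heads acting on $G^n$, I would build a single-head $G$-machine $T$ on $G$ which accepts a pattern $p$ if and only if $M$ does, thereby witnessing that $L$ is $G$-recursively enumerable.

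First I would fix the encoding. The tape alphabet of $T$ is a product alphabet that stores at each vertex $g \in G$ the $n$ symbols carried at $g$ by the $n$ tapes of $M$, together with $n$ Boolean flags recording, for each virtual head $i \in \{1,\dots,n\}$, whether the $i$-th head of $M$ currently sits at $g$; a couple of extra auxiliary tracks are reserved for bookkeeping during navigation. The combined control state $(q_1,\dots,q_n) \in Q^n$ of $M$ is kept in the finite control of $T$. On input $p$ with support $F$, the machine $T$ initializes its first track to $p$ (blank elsewhere), leaves the other tracks blank, sets all $n$ head flags at $1_G$, and stores $(q_0,\dots,q_0)$, faithfully encoding the initial configuration of $M$.

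Next I would implement one step of $M$ in two sweeps. In the \emph{gather} sweep, $T$ must read the symbol underneath each of the $n$ virtual heads. Since the heads have moved along edges of $\Gamma(G,S)$ starting from $1_G$, after any finite number of simulated steps each flagged vertex lies at finite distance from $1_G$; because $G$ is finitely generated the balls $B_r$ are finite, so $T$ can explore them in order of increasing $r$ until all $n$ flags have been located, recording the $n$ symbols it reads in its finite control. To be able to traverse the Cayley graph with a single head and return, $T$ writes a spanning tree of parent pointers on an auxiliary track as it discovers new vertices, which lets it backtrack deterministically; the auxiliary track is erased on the way back. Once the $n$ symbols are known, $T$ evaluates $\delta$ on these symbols and on the stored state tuple, obtaining new symbols, a new tuple of states, and moves $s_1,\dots,s_n \in S$, and updates the stored tuple. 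In the \emph{update} sweep, $T$ relocates each flagged vertex, writes the new symbol on the corresponding track, clears the flag, moves to the $s_i$-neighbour and sets the flag there. If the new state tuple has a coordinate in $Q_F$, then $T$ halts in an accepting state.

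Correctness follows by an induction on the number of simulated steps, the invariant being that the content of $T$'s tracks encodes the $n$ tapes of $M$, that the flags mark the current head positions, and that the finite control of $T$ holds the current state tuple of $M$. Hence $T$ reaches an accepting state exactly when $M$ does, i.e. exactly on the patterns of $L$. The only place where the argument genuinely departs from the classical $\ZZ$ (or $\ZZ^d$) construction is the navigation step, where a straight left-to-right sweep over the interval spanned by the markers is no longer available; I expect this to be the main obstacle. It is resolved by the finite generation of $G$, which makes each ball $B_r$ finite and guarantees that a breadth-first exploration with parent-pointer backtracking terminates after finitely many steps, since the $n$ head markers are finite in number and at bounded distance. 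Everything else is a routine transcription of the multi-tape-to-single-tape simulation, and the robustness under the choice of generating set established in the previous proposition ensures that the construction does not depend on $S$.
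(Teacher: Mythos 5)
Your proposal is correct in substance, but there is nothing in the paper to measure it against: the authors state Proposition~\ref{proposition.multiple_heads_Gmachine} \emph{without proof}, treating it as a routine robustness property of the model (the surrounding text only remarks informally that heads can ``read'' other tapes by codifying movements). What you wrote is essentially the argument the authors leave implicit: the easy direction via $n=1$, and for the converse the classical multi-track simulation -- a product alphabet carrying the $n$ tape symbols and $n$ head flags at each vertex, the state tuple kept in the finite control -- whose only genuinely non-classical ingredient is single-head navigation in $\Gamma(G,S)$ to locate the flagged vertices, since the sweep over an interval available in $\ZZ$ has no analogue. Your device for this (growing balls, parent pointers forming a spanning tree used for deterministic backtracking) is the same kind of mechanism the paper itself constructs inside the proof of Theorem~\ref{theorem_oracle_is_G_effective}, where $\mathcal{M}_{\text{PATH}}$ and $\mathcal{M}_{\text{VISIT}}$ visit all of $B_n$ by backtracking over self-avoiding paths; in effect your proof supplies the justification for the multi-head machinery that the paper then uses in that later proof.

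One point needs care when the exploration is made precise: it must be a genuine layer-by-layer breadth-first search (use the already-built spanning tree of $B_k$ to return to each frontier vertex and extend it by one generator; by induction the $k$-th layer is exactly the sphere of radius $k$), or else exhaustive backtracking over all self-avoiding paths of bounded length as in $\mathcal{M}_{\text{VISIT}}$. A depth-limited depth-first search that marks vertices as globally visited is \emph{not} sufficient: a vertex can be first reached, and marked, along a non-geodesic path at exactly the depth limit, so its neighbours are never expanded and vertices well inside the ball can be missed at every stage. Since you explicitly invoke breadth-first order and parent pointers, your scheme admits a correct implementation; the single-head routine for ``find the next frontier vertex'' (a traversal of the finite spanning tree, which poses none of the difficulties of searching the ambient graph) is the one detail your sketch leaves unsaid.
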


This extended model is useful to prove the second of the following two results which link oracle machines to $G$-machines. The first result is relatively straightforward, as $G$-machines can be simulated by a machine with oracle $\WP(G)$ by creating arbitrarily big balls of the Cayley graph. The second result is more interesting as it says that oracle machines can be simulated by $G$-machines.

\begin{theorem}\label{theorem_g_effective_is_oracle}
Let $L \subset \Sigma_G^*$ be $G$-recursively enumerable. Then there exists a recursively enumerable with oracle $\WP(G)$ set of pattern codings $\CC$ such that $L = p(\CC)$.
\end{theorem}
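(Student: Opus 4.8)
The plan is to turn the $G$-machine recognizing $L$ into a classical Turing machine with oracle $\WP(G)$ that reads pattern codings, using the oracle to reconstruct on the fly the identifications between words of $S^*$ and elements of $G$ that a $G$-machine performs implicitly by walking on $\Gamma(G,S)$. Fix a $G$-machine $T = (Q,\Sigma,\sqcup,q_0,Q_F,\delta)$ that accepts exactly the patterns of $L$, and work in the moving head model, so that a computation of $T$ on input $p$ is determined by an evolving pair consisting of a position in $G$ and a tape content, started from $x^p$. I would build an oracle machine $M$ whose input is a pattern coding $c = (w_i,a_i)_{i\in I}$ over $\Sigma$ and whose acceptance set will be the desired $\CC$.

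For the simulation itself, on input $c$ the machine $M$ first tests consistency. Since $w_i =_G w_j \iff w_iw_j^{-1} \in \WP(G)$, a single oracle query per pair decides equality, so $M$ can check in finitely many steps whether some pair $(w_i,a_i),(w_j,a_j)$ with $a_i \neq a_j$ satisfies $w_i =_G w_j$; if so, it halts without accepting. Otherwise $c$ is consistent and $p(c)$ is well defined, and $M$ simulates $T$ on $x^{p(c)}$ while storing the tape as a finite list of pairs $(v,b)$ with $v \in S^*$ and $b \in \Sigma$, initialized to the entries of $c$, together with a word $u \in S^*$ recording the current head position, initialized to $\epsilon$. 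To execute one step of $T$: to read the symbol under the head, $M$ queries the oracle to test $u =_G v$ for each recorded word $v$; if a match is found it uses the stored symbol $b$, and otherwise the symbol is $\sqcup$; it then applies $\delta$, updates (or appends) the pair attached to $u$ with the symbol written, and replaces $u$ by $us$ for the prescribed generator $s$. The machine $M$ accepts $c$ precisely when this simulation reaches a state of $Q_F$.

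It then remains to check correctness, and this is where the only real subtlety lies. One maintains throughout the invariant that the recorded words represent pairwise distinct elements of $G$ and that every element not named by a recorded word carries the symbol $\sqcup$; the oracle-based equality tests guarantee that this invariant survives each read, write and move, so that at every step the stored list faithfully encodes the genuine $T$-configuration on $x^{p(c)}$. Consequently, for a consistent pattern coding $c$ the machine $M$ accepts $c$ if and only if $T$ accepts $p(c)$, that is, if and only if $p(c) \in L$. Taking $\CC$ to be the acceptance set of $M$ yields a set of pattern codings that is recursively enumerable with oracle $\WP(G)$, and since inconsistent codings are never accepted while every $p \in L$ admits a consistent coding $c$ with $p(c)=p$, we obtain $p(\CC) = L$. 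The main obstacle is exactly this bookkeeping: ensuring that a finite list of word--symbol pairs, manipulated only through oracle equality tests, correctly tracks a configuration over the whole group $G$ even when several distinct words may name the same cell.
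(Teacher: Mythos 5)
Your proposal is correct and takes essentially the same route as the paper: both arguments build a classical Turing machine with oracle $\WP(G)$ that directly simulates the given $G$-machine on $p(c)$ in the moving-head model and accepts exactly when that simulation accepts, so that the acceptance set $\CC$ satisfies $p(\CC)=L$. The differences are only bookkeeping — the paper uses the oracle to reconstruct whole balls $B_k$ of $\Gamma(G,S)$ and simulates $k$ steps for increasing $k$, while you keep a sparse list of word--symbol pairs resolved by on-demand equality queries (note that your distinctness invariant only holds if you first merge duplicate entries of $c$, or else update \emph{every} matching pair on a write) — together with the immaterial choice that the paper's machine accepts inconsistent codings whereas yours rejects them; either convention yields $p(\CC)=L$.
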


\begin{proof}

Suppose $T_G$ is the $G$-machine recognizing $L$. With an oracle of $\WP(G)$, a machine can construct balls $B_n$ of $\Gamma(G,S)$ for arbitrary $n$. A codification of $B_n$ allows a classical Turing machine to simulate at least $n$ applications of $T_G$ in the moving head model as the head starts in the origin and moves at most one generator per iteration.
Let $T$ be the Turing machine with oracle $\WP(G)$ which does the following on entry $c$.

\begin{itemize}
	\item Let $N = 2\max_{(w,a)\in c}{|w|}$. Solve the word problem for all $w \in S^*$ of length at most $N$. If $c$ is inconsistent accept. 
	\item Let $k = N$ and iterate the following procedure: Solve the word problem for $w \in S^*$ of length at most $k$ and simulate $T_G$ over $p(c)$ for $k$ steps. If this procedure accepts then accept, otherwise increase $k$ by $1$.
\end{itemize}

Clearly, $T$ accepts $c$ if and only if either $c$ is inconsistent or $p(c) \in L$. \end{proof}

\begin{definition}
	A language $L \subset \Sigma_G^*$ is said to be \define{closed by extensions} if for each $p_1 \in \Sigma^{F_1}$, $p_2 \in \Sigma^{F_2}$ such that $F_1 \subset F_2$ and $p_2|_{F_1} = p_1$ then $p_1 \in L \implies p_2 \in L$.
\end{definition}

\begin{theorem}\label{theorem_oracle_is_G_effective}
Let $G$ be an infinite group and $\CC$ a recursively enumerable with oracle $\WP(G)$ set of pattern codings. If $p(\CC)$ is closed by extensions, then $p(\CC)$ is $G$-recursively enumerable.
\end{theorem}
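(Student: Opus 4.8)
The plan is to build a (multiple–head) $G$-machine that accepts a pattern $p$ precisely when $p \in p(\CC)$. The guiding idea is that the group tape lets a $G$-machine solve the word problem \emph{by itself}, so the $\WP(G)$-oracle available to the machine recognizing $\CC$ is, in this model, free of charge. Concretely, a $G$-machine semi-decides both $\WP(G)$ and its complement: to test whether a word $w = s_1\cdots s_k \in S^*$ represents $1_G$, it writes a marker on an otherwise blank working tape at its current cell, follows the generators $s_1,\dots,s_k$ one by one, and checks whether the cell it now reads carries the marker. Since following $w$ moves the head to the group element represented by $w$, the marker is read again if and only if $w =_G 1_G$; the machine then retraces $s_k^{-1},\dots,s_1^{-1}$, returns to the marked cell and erases it, leaving the tape clean for the next probe. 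Using the multiple-head model of Proposition~\ref{proposition.multiple_heads_Gmachine} to keep these probes on a dedicated tape, the machine can answer any oracle query posed by the machine recognizing $\CC$.

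The second ingredient is a reformulation of membership. Writing $q \sqsubseteq p$ for ``$q$ is the restriction of $p$ to $\supp(q)$'', the hypothesis that $p(\CC)$ is closed by extensions gives: $p \in p(\CC)$ if and only if there is a consistent $c \in \CC$ with $p(c) \sqsubseteq p$. Indeed, if $p \in p(\CC)$ pick $c$ with $p(c)=p$; conversely, if $c\in\CC$ is consistent and $p(c)\sqsubseteq p$, then $p(c)\in p(\CC)$ and $p$ extends $p(c)$, so closedness yields $p \in p(\CC)$. This is exactly what makes a \emph{halting} verification possible: a $G$-machine can never certify $p(c)=p$ (that would require inspecting infinitely many blank cells), but it can certify $p(c)\sqsubseteq p$ after reading finitely many cells.

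The machine then dovetails over all pattern codings $c$ and all step-bounds $n$. For each pair $(c,n)$ it simulates the machine recognizing $\CC$ on input $c$ for $n$ steps, answering every oracle query by the marking procedure above, and in parallel it tests $p(c)\sqsubseteq p$: for each $(w_i,a_i)\in c$ it walks on the input tape from the origin along $w_i$, compares the symbol read with $a_i$, and retraces $w_i^{-1}$ back to the origin (the net displacement $w_iw_i^{-1}$ is $1_G$, so the head always returns, no oracle needed). It accepts as soon as some $c$ is accepted by the simulation and passes every comparison. An inconsistent $c$ fails automatically, since two words that are equal in $G$ address the same physical cell, which cannot match two distinct symbols $a_i\neq a_j$; hence the accepted codings are exactly the consistent members of $\CC$ with $p(c)\sqsubseteq p$, and by the reformulation such a $c$ exists iff $p\in p(\CC)$. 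Converting this multiple-head machine back to a single-head one via Proposition~\ref{proposition.multiple_heads_Gmachine} shows $p(\CC)$ is $G$-recursively enumerable.

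The step I expect to be the main obstacle is the one point I have glossed over: simulating the classical computation of the machine recognizing $\CC$ needs an unbounded classical work tape, whereas the only infinite storage available is $G$ itself, and $G$ may be an infinite torsion group with no infinite-order element along which to lay out a line. I would resolve this exactly as in the ball constructions used earlier (compare Lemma~\ref{lemma_secuencias_recursivas_disjuntas}): on a separate working head the machine grows balls $B_n$ of $\Gamma(G,S)$ by breadth-first search, marking each newly reached cell with the generator pointing to its parent, so that the visited region becomes a tree the head can navigate deterministically, while the word problem is used only to detect when a freshly reached word coincides with an already visited cell. Recording on the group tape the adjacency pointers linking consecutive cells turns this supply of cells into an arbitrarily extendable linear work tape for the simulated machine; the exact number of $G$-machine steps needed to move between work cells is immaterial to correctness. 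Since $G$ is infinite, the boundary of every $B_n$ contains fresh cells, so this simulated tape never runs out of room, and with one head for the input pattern, one for the $\WP(G)$ probes, and one for the simulated work tape, the construction goes through for every infinite $G$.
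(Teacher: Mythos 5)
Your proof is correct, and its engine is the same as the paper's: a multiple-head $G$-machine simulates the classical oracle machine recognizing $\CC$ on a linear structure laid out inside the group, answers each $\WP(G)$-query itself by the mark--walk--check--retrace test (this is exactly the paper's $\mathcal{M}_{\text{ORACLE}}$ layer), and closure by extensions is what makes acceptance on partial, finitely verified information sound. Where you genuinely diverge is in how the input pattern is confronted with $\CC$. The paper first replaces $\CC$ by a maximal set (its opening ``without loss of generality'' step), then uses two dedicated layers ($\mathcal{M}_{\text{VISIT}}$ and $\mathcal{M}_{\text{AUX}}$) to scan balls $B_n$, extract from the stored input the pattern coding of its non-blank part, and feed that one specific coding to the simulation, restarting whenever the coding grows; maximality is needed precisely so that this specific coding lies in $\CC$ when $p \in p(\CC)$. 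You invert the data flow: you dovetail over all pairs $(c,n)$, simulate the $\CC$-recognizer on $c$ for $n$ steps, and test $p(c)\sqsubseteq p$ by walking on the input tape. This removes the need for maximality altogether (membership $p\in p(\CC)$ directly supplies a witness coding, and your biconditional reformulation handles the rest) and makes soundness immediate, including the nice automatic rejection of inconsistent codings; that is a genuine simplification. Your tape layout also differs: you navigate a BFS spanning tree via parent pointers, whereas the paper grows a non-self-intersecting path by backtracking ($\mathcal{M}_{\text{PATH}}$). One detail you should make explicit there: the traversal order of a \emph{growing} tree is not stable (a cell adjoined later can be inserted in the middle of the depth-first order), so the simulated work tape must be frozen during each simulation stage --- e.g.\ pre-grow a tree large enough for the $n$-step simulation of stage $(c,n)$ and only then simulate, or restart upon growth. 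This is the exact analogue of the paper's rule that all layers restart when $\mathcal{M}_{\text{PATH}}$ backtracks into used cells, and your per-stage bounded dovetailing accommodates it without any new idea, so it is a presentational gap rather than a mathematical one.
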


\begin{proof}
Without loss of generality we can suppose $\CC$ is a maximal set of pattern codings which gives $p(\CC)$. Moreover we can also assume that $T$ is a one-sided Turing machine with a reading tape and a working tape.

The construction is a multiple head $G$-machine $\mathcal{M}$ which consists of the following six layers (see Figure~\ref{figure.Z_machine_inside_G_bis}):

\begin{enumerate}
\item A storage layer $\mathcal{M}_{\text{STORE}}$ where the input $p \in \Sigma_G^*$ is stored.
\item A machine $\mathcal{M}_{\text{PATH}}$ which constructs an arbitrarily long one-sided non-intersecting path starting from $1_G$.
\item A machine $\mathcal{M}_{\text{VISIT}}$ which is able to visit iteratively all the elements of $B_n$ for $n \in \NN$ starting with $n$ initially assigned to 1.
\item A Machine $\mathcal{M}_{\text{ORACLE}}$ which solves $\WP(G)$.
\item An auxiliary layer $\mathcal{M}_{\text{AUX}}$ which serves as a nexus between the first layer and the sixth.
\item A simulation layer $\mathcal{M}_{\text{SIM}}$ which simulates $T$ in the one-sided path created by $\mathcal{M}_{\text{PATH}}$.
\end{enumerate}

\begin{figure}[h]
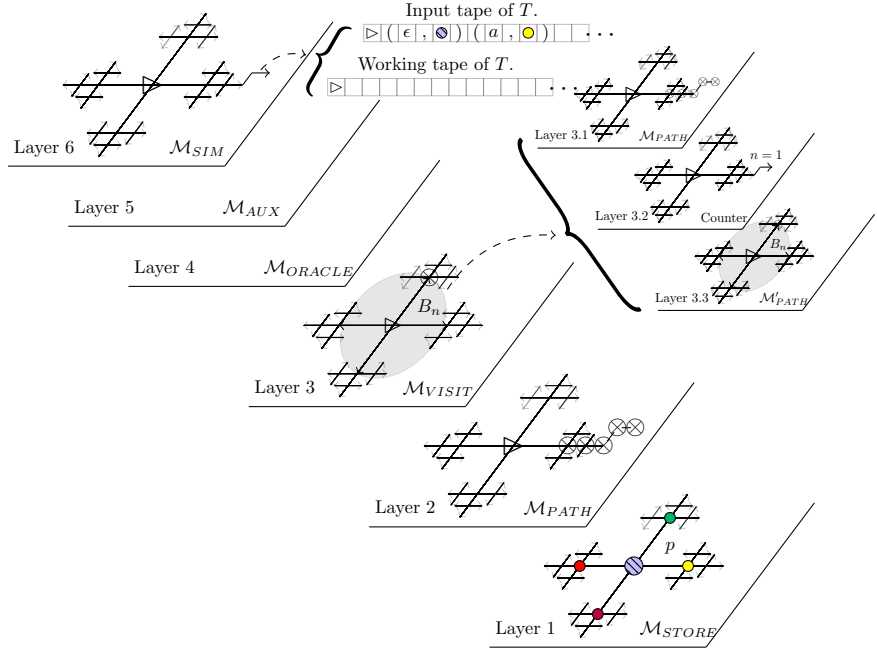

	\begin{center}
		\include{Z_machine_inside_G_bis}
		\caption{Construction of the machine $\mathcal{M}$ as a multiple head $G$-machine.}
		\label{figure.Z_machine_inside_G_bis} 
	\end{center}
\end{figure}

We will first describe $\mathcal{M}_{\text{PATH}}$ and $\mathcal{M}_{\text{VISIT}}$ which are the most complicated components. Then we will describe the general working of the machine.

We begin by describing $\mathcal{M}_{\text{PATH}}$ in detail. Let the set of generators $S= \{g_1,g_2,\dots,g_k\}$ and consider the $G$-machine $\mathcal{M}_{\text{PATH}} := (Q,\Sigma,\sqcup,q_0,Q_F\delta)$ where $Q := \{I,B\}\cup(S \times\{\leftarrow,\rightarrow\})$, $\Sigma = (\{\sqcup,\vartriangleright \}\cup S)\times\{\sqcup,\otimes\}\times(\{\sqcup\}\cup S)$, $q_0 = I$, $Q_F = \emptyset$ (we force the machine to loop), and $\delta$ is given by the following rules where $*_i$ stands for an arbitrary fixed symbol.

$$\delta((\sqcup,\sqcup,\sqcup),I) = ((\vartriangleright,\otimes,g_1), g_1^{\leftarrow} , g_1).$$ 
$$\delta((\sqcup,\sqcup,\sqcup),g_i^{\leftarrow}) = ((g_i,\otimes,\sqcup), g_1^{\rightarrow} , 1_G).$$
$$\delta((*_1,\otimes,*_2),g_i^{\rightarrow}) = ((*_1,\otimes,g_i), g_i^{\leftarrow} , g_i).$$
$$\delta((*_1,\otimes,*_2),g_i^{\leftarrow}) = 
((*_1,\otimes,*_2), B , g_i^{-1}).$$
$$\delta((g_j,\otimes,g_i),B) = 
\begin{cases} ((g_j,\otimes,g_i),g_{i+1}^{\rightarrow},1_G), & \mbox{if }  i < k \\ ((\sqcup,\sqcup,\sqcup),B,g_j^{-1}),  & \mbox{if } i = k. \end{cases}$$
$$\delta((\vartriangleright,\otimes,g_i),B) =  ((\vartriangleright,\otimes,g_i),g_{i+1}^{\rightarrow},1_G), \mbox{   if }  i < k $$

The rules from $\delta$ codify a backtracking in $G$ which marks a one-sided non-intersecting infinite path in $G$. The states $I$ and $B$ stand for initialization and backtracking respectively. The elements from $\Sigma$ are triples $(a_1,a_2,a_3)$ which indicate the following information: my left and right neighbors are $a_1$ and $a_3$ respectively and I belong to the path if $a_2 = \otimes$. The first rule initializes the infinite path by using the symbol $\vartriangleright$ to indicate that there is no element to the left, marks the identity of the group as part of the path by using $\otimes$ and sets the next element in the direction $g_1$. The second and third rules mark the left and right neighbors respectively and move to the next position. Rule 4 deals with the case of reaching a position already marked and going back. Rule 5 and 6 search the next available direction which potentially admits an infinite path and backtrack if every position has already been searched. Rule 6 lacks a case where $i = 
2k$ on purpose because such a state is never reached as the group is infinite.

Next we describe $\mathcal{M}_{\text{VISIT}}$ that visits all elements of every ball $B_n$ in $G$ iteratively. It suffices to construct it as a multiple head $G$-machine with three layers as follows. The first layer runs a copy of $\mathcal{M}_{\text{PATH}}$. The second layer makes use of the path defined by $\mathcal{M}_{\text{PATH}}$ to simulate a counter which has value $n \in \NN$ -- any one-sided Turing machine can be simulated in the path by identifying the instructions $L,R$ with the first and third coordinates of $\Sigma$. The third layer runs another copy of $\mathcal{M}_{\text{PATH}}$, which is allowed only to run over words of length $n$. This is achieved by using the counter in second layer to measure the length of the path visited by the third layer and restrict it to be less than $n$. Each time the whole ball $B_n$ is visited (that is, $((\vartriangleright,\otimes,g_{k}),B)$ is reached in the third layer) then the counter in the second layer increments $n$ by $1$ and the third layer starts anew.
 
If at a given time the first layer, which constructs the one-sided path, backtracks until reaching a cell used by the counter in the second layer, then the second and third layers are erased and restart. As the group is infinite, then by choosing an adequate number of computation steps, the path generated by $\mathcal{M}_{\text{PATH}}$ in the first layer is arbitrarily long. Thus the head of the third tape is able to visit every element of $B_n$ for arbitrarily big $n$.

\medskip

Finally, we describe the overall functioning of $\mathcal{M}$:

\begin{itemize}
	\item The input $p \in \Sigma_G^*$ is stored in $\mathcal{M}_{\text{STORE}}$ whose head mimics that of $\mathcal{M}_{\text{VISIT}}$ without changing anything.
	\item The machines $\mathcal{M}_{\text{PATH}}$ and $\mathcal{M}_{\text{VISIT}}$ run independently.
	\item $\mathcal{M}_{\text{SIM}}$ uses the path given by $\mathcal{M}_{\text{PATH}}$ to simulate two one-sided Turing machine tapes: an input tape where input will be stored, and a working tape which simulates $T$ over that input.
	\item Whenever $\mathcal{M}_{\text{VISIT}}$ arrives at a position where the first layer is not marked by $\sqcup$, the head at $\mathcal{M}_{\text{AUX}}$ follows the path $w$ marked from $1_G$ by the first layer of $\mathcal{M}_{\text{VISIT}}$ and writes $(w,a)$ in the input tape of $\mathcal{M}_{\text{SIM}}$. Then $\mathcal{M}_{\text{AUX}}$ marks position $w$ as already visited and returns to $1_G$.
	\item If at a given time $\mathcal{M}_{\text{AUX}}$ extends the pattern coding written in the reading tape of the fifth layer, then the working tape of $\mathcal{M}_{\text{SIM}}$ erases everything and begins anew.
	\item If at any moment the working tape of $\mathcal{M}_{\text{SIM}}$ makes a call to the oracle~$\WP(G)$, then $\mathcal{M}_{\text{ORACLE}}$ is made to mark the origin, follow the path $w \in S^*$ and accept the call if the last symbol is marked. Then it erases everything and goes back to the origin.
	\item If at any moment the end of the simulated path 
	created by $\mathcal{M}_{\text{PATH}}$ backtracks into a cell used by the written portion of $\mathcal{M}_{\text{SIM}}$, then the content of all tapes except $\mathcal{M}_{\text{PATH}}$ and $\mathcal{M}_{\text{STORE}}$ is erased and they start anew. 
	\item $\mathcal{M}$ accepts if and only if the working tape of $\mathcal{M}_{\text{SIM}}$ does.
\end{itemize}

As $\mathcal{M}_{\text{PATH}}$ is able to construct arbitrarily long one-sided and non-intersecting paths, there is a finite number of computation steps such that $\mathcal{M}_{\text{VISIT}}$ will visit all of the support of $p$. Thus the fourth layer will write a consistent pattern coding $c$ such that $p = p(c)$ which is accepted by the working tape of $\mathcal{M}_{\text{SIM}}$ if and only if $p \in p(\CC)$ (as $\CC$ is maximal). By considering a path which has length at least two times the running time of all the other algorithms, this eventually happens. Conversely, if $p \notin p(\CC)$, as $p(\CC)$ is closed by extensions, the acceptance of any partial coding $c'$ would mean that $p \in p(\CC)$, therefore, the machine never accepts. \end{proof}

\begin{corollary}\label{the_great_corollary}
	A subshift $X \subset \ag^G$ is $G$-effectively closed if and only if there exists a $G$-recursively enumerable set $\FF \subset \ag_G^*$ such that $X = X_{\FF}$.
\end{corollary}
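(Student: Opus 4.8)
The plan is to deduce the corollary directly from the two preceding theorems, since between them they supply the two directions of the equivalence; the only real work is to verify that the hypotheses of each theorem are met. Throughout I would assume $G$ is infinite: if $G$ is finite then $\ag^G$ is finite, every $\FF \subset \ag_G^*$ is trivially $G$-decidable by a $G$-machine that inspects the finite tape, and both notions degenerate, so there is nothing to prove.

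First I would establish that every $G$-effectively closed subshift has the form $X_{\FF}$ for a $G$-recursively enumerable $\FF$. Suppose $X$ is $G$-effectively closed. By property (1) of the list in Subsection~\ref{subsection.G_effectiveness} I may take the defining set of pattern codings $\CC$ to be \emph{maximal} and recursively enumerable with oracle $\WP(G)$; thus $c \in \CC$ precisely when $X \cap \bigcap_{(w,a)\in c}[a]_w = \emptyset$. For a consistent $c$ this reads $[p(c)]_{1_G}\cap X = \emptyset$, so $p(\CC)$ is exactly the set of all patterns globally forbidden in $X$. This set is closed by extensions: if $p_1 \in p(\CC)$ and $p_2$ extends $p_1$ to a larger support, then $[p_2]_{1_G}\subseteq [p_1]_{1_G}$, hence $[p_2]_{1_G}\cap X = \emptyset$ and $p_2 \in p(\CC)$. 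Theorem~\ref{theorem_oracle_is_G_effective} now applies and shows $\FF := p(\CC)$ is $G$-recursively enumerable. Since $X_{\CC} = X_{p(\CC)}$, we conclude $X = X_{\FF}$.

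Conversely, I would show that if $X = X_{\FF}$ for a $G$-recursively enumerable $\FF$, then $X$ is $G$-effectively closed. By Theorem~\ref{theorem_g_effective_is_oracle} there is a set of pattern codings $\CC$, recursively enumerable with oracle $\WP(G)$, such that $p(\CC) = \FF$. Invoking again the identity $X_{\CC} = X_{p(\CC)}$ gives $X = X_{\FF} = X_{p(\CC)} = X_{\CC}$, and as $\CC$ is recursively enumerable with oracle $\WP(G)$, Definition~\ref{definition_G_effectively_closed} shows $X$ is $G$-effectively closed.

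All the genuine mathematical content is carried by the two theorems, and the corollary is their bookkeeping combination through the relation $X_{\CC} = X_{p(\CC)}$. The one point that requires care, and which I expect to be the main obstacle, is the first direction: the forbidden-pattern language attached to an arbitrary defining family need not be closed by extensions, so Theorem~\ref{theorem_oracle_is_G_effective} cannot be applied to it directly. The decisive move is the passage to the maximal family of pattern codings, for which $p(\CC)$ equals the entire set of globally forbidden patterns and is therefore automatically closed by extensions, thereby furnishing the missing hypothesis.
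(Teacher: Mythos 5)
Your proof is correct and follows essentially the same route as the paper's: the ``if'' direction comes from Theorem~\ref{theorem_g_effective_is_oracle}, the ``only if'' direction from Theorem~\ref{theorem_oracle_is_G_effective}, and in both cases the key step is passing to a maximal set of pattern codings, whose associated set $p(\CC)$ of globally forbidden patterns is closed by extensions, exactly as in the paper. Your explicit verification of the closed-by-extensions property, the identity $X_{\CC}=X_{p(\CC)}$, and the infinite-group hypothesis of Theorem~\ref{theorem_oracle_is_G_effective} only spells out details the paper leaves implicit.
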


\begin{proof}
	As $X$ is $G$-effectively closed, the set of forbidden pattern codings $\CC$ can be chosen to be maximal. This in turn gives a maximal set of forbidden patterns $p(\CC)$ which is closed by extensions. Theorems~\ref{theorem_g_effective_is_oracle} and~\ref{theorem_oracle_is_G_effective} imply the result.
\end{proof}

Let $\texttt{HALT}_{G} = \{ \langle T \rangle \mid T \text{  is a }G\text{-machine which accepts the empty input}\}$. 

\begin{corollary}\label{hardness}
	Let $G$ be an infinite group. $\texttt{HALT}_{G}$ is $\WP(G)'$-hard, that is, it is at least as hard as the halting problem for Turing machines with oracle $\WP(G)$
\end{corollary}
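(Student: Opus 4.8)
The plan is to exhibit a computable many-one reduction $\WP(G)' \leq_m \texttt{HALT}_G$, where $\WP(G)'$ is the halting problem for Turing machines equipped with the oracle $\WP(G)$, that is, the set of codes $\langle M \rangle$ of such machines which halt on the empty input. Concretely, I would produce, uniformly in $\langle M \rangle$, the code $\langle T_M \rangle$ of a $G$-machine such that $M^{\WP(G)}$ halts on the empty input if and only if $T_M$ accepts the empty input.

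The core of the argument is already contained in the proof of Theorem~\ref{theorem_oracle_is_G_effective}: there a $G$-machine simulates a one-sided Turing machine with oracle $\WP(G)$. I would reuse the three essential layers of that construction. The layer $\mathcal{M}_{\text{PATH}}$ builds an arbitrarily long one-sided non-intersecting path starting at $1_G$, which serves as the tape of the simulated machine; the layer $\mathcal{M}_{\text{SIM}}$ carries out the simulation of $M$ along that path; and the layer $\mathcal{M}_{\text{ORACLE}}$ answers an oracle query $w \in S^*$ by marking the origin, following the path labelled by $w$ in the Cayley graph, and accepting the query precisely when the endpoint is the origin --- which happens if and only if $w =_G 1_G$. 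It is exactly this last layer that lets a $G$-machine resolve $\WP(G)$ on its own, using nothing but the geometry of $\Gamma(G,S)$.

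Since the input is empty, I would drop the layers $\mathcal{M}_{\text{STORE}}$, $\mathcal{M}_{\text{AUX}}$ and $\mathcal{M}_{\text{VISIT}}$ (which in Theorem~\ref{theorem_oracle_is_G_effective} only serve to transcribe the input pattern into the reading tape), and instead start $\mathcal{M}_{\text{SIM}}$ on an empty reading tape. The dovetailing mechanism is kept verbatim: whenever the head of $\mathcal{M}_{\text{SIM}}$ would run past the currently available portion of the path, or whenever $\mathcal{M}_{\text{PATH}}$ backtracks into a cell used by the simulation, all tapes except $\mathcal{M}_{\text{PATH}}$ are erased and the simulation restarts on the now longer path. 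Because $G$ is infinite, $\mathcal{M}_{\text{PATH}}$ produces paths of unbounded length, so if $M^{\WP(G)}$ halts in $t$ steps --- hence using at most $t$ cells and queries of length at most $t$ --- then some restart eventually has enough room to run the simulation to completion and reach an accepting state; conversely, if $M$ never halts then no restart ever accepts. Thus $T_M$ accepts the empty input if and only if $M^{\WP(G)}$ halts on it.

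Finally, the map $\langle M \rangle \mapsto \langle T_M \rangle$ is computable, since the layers $\mathcal{M}_{\text{PATH}}$ and $\mathcal{M}_{\text{ORACLE}}$ are fixed independently of $M$ while $\mathcal{M}_{\text{SIM}}$ is obtained by mechanically translating the transition table of $M$. This yields $\WP(G)' \leq_m \texttt{HALT}_G$ and proves $\WP(G)'$-hardness. I expect the main obstacle to be the bookkeeping of the restart mechanism: one must check that resetting the simulation each time the path proves too short neither loses nor spuriously creates an accepting computation, and that the oracle queries are always answered correctly by $\mathcal{M}_{\text{ORACLE}}$ regardless of the current path length. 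Both points, however, are precisely what is verified in the proof of Theorem~\ref{theorem_oracle_is_G_effective}, so the remaining work is only to observe that the construction is uniform in $M$ and adapts to the empty-input case.
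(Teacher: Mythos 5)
Your proposal is correct and follows essentially the same route as the paper: the paper's proof of Corollary~\ref{hardness} likewise takes the construction of Theorem~\ref{theorem_oracle_is_G_effective}, discards the storage/visit/auxiliary layers, and keeps only $\mathcal{M}_{\text{PATH}}$, $\mathcal{M}_{\text{ORACLE}}$ and $\mathcal{M}_{\text{SIM}}$ running the oracle machine on empty input. Your write-up is in fact more careful than the paper's, which compresses the uniformity of the reduction and the correctness of the restart mechanism into ``it is clear that this machine accepts the empty input if and only if $T$ accepts the empty input.''
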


\begin{proof}
	Let $T$ be a Turing machine with oracle $\WP(G)$. Consider the construction from Theorem~\ref{theorem_oracle_is_G_effective} without the Visit and Auxiliary tapes. Thus, there is only the tape which searches the infinite path, the oracle layer, and the layer which simulates $T$ (now only on empty input). It is clear that this machine accepts the empty input (and all inputs) if and only if $T$ accepts the empty input.
\end{proof}

Corollary~\ref{the_great_corollary} implies that $G$-effectively closed subshifts can be defined either by oracle machines or by $G$-machines. This nice characterization allows us to simulate Turing machines in groups which may not even have torsion-free elements. In what remains of this section we present applications of these machines both to the domino problem and to construct a simulation theorem.


\subsection{Application: A class of groups with undecidable domino problem}
\label{subsection.DP_undecidable}

The \define{domino problem} of a finitely generated group $G$ is defined as the language given by the finite sets of pattern codings which give an empty subshift. Informally:

$$\texttt{DP}(G) = \{ \langle\FF\rangle \mid |\FF| < \infty, X_{\FF} = \emptyset \}$$,

where $\langle\FF\rangle$ is a codification of the finite set of forbidden patterns $\FF$. Another related notion is the \define{origin constrained domino problem}, where a symbol in the alphabet is fixed to appear at the origin.

$$\texttt{OCDP}(G) = \{ (a,\langle\FF\rangle) \mid |\FF| < \infty, X_{\FF}\cap [a]_{1_G} = \emptyset \}$$

%

It is known that both problems are decidable in $\ZZ$~\cite{lind1995introduction} but undecidable in $\ZZ^d$ with $d >1$~\cite{Berger1966,Robinson1971}. Clearly, the decidability of $\texttt{OCDP}(G)$ implies the decidability of $\texttt{DP}(G)$ as it would suffice to run the algorithm for every symbol of the finite alphabet. So far, we do not know any group where the decidability of these two languages differ. In this section we use $G$-machines to exhibit a class of groups where these problems are undecidable.

\begin{theorem}
	\label{theorem_group_undecidable_DP} 
	Let $G$ be an infinite group with the special symbol property. Then:
	\begin{itemize}
		\item The origin constrained domino problem $\texttt{OCDP}(G \times \ZZ)$ is $\texttt{WP}(G)'$-hard. 
		\item For any non-trivial finite group $H$, the domino problem for $(G \times \ZZ) \ast H$ is $\texttt{WP}(G)'$-hard. 
		
	\end{itemize}

\end{theorem}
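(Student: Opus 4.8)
The plan is to prove both statements by reduction from $\texttt{HALT}_G=\{\langle T\rangle \mid T \text{ is a }G\text{-machine accepting the empty input}\}$, which is $\WP(G)'$-hard by Corollary~\ref{hardness}. For the first bullet I would produce a computable map $\langle T\rangle \mapsto (a,\langle\FF\rangle)$ such that $T$ accepts the empty input if and only if $X_{\FF}\cap[a]_{(1_G,0)}=\emptyset$; this shows $\texttt{OCDP}(G\times\ZZ)$ is $\WP(G)'$-hard. For the second bullet I would then reduce $\texttt{OCDP}(G\times\ZZ)$ (now known hard) to $\texttt{DP}((G\times\ZZ)\ast H)$.

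For the first bullet the idea is to realise the space--time diagram of $T$ as a $(G\times\ZZ)$-SFT: the $\ZZ$-direction plays the role of time, and each slice $G\times\{t\}$ stores a snapshot over $G$ of the tape of $T$ at time $t$, together with the position and internal state of the unique head. The transition function $\delta$ is enforced by finitely many local rules relating a slice $G\times\{t\}$ to $G\times\{t+1\}$ at a vertex and its $S$-neighbours, which is legitimate because the head moves by a single generator per step. The origin constraint selects $a$ to be the symbol ``head present in state $q_0$''; I also forbid the head from ever carrying a state of $Q_F$, and I allow a head in state $q_0$ to have no predecessor while forbidding any head to vanish. To make the run faithful on the blank input I add a monotone ``visited'' marker, so that any cell not yet reached by the head carries the blank symbol and each head therefore simulates $T$ started on the empty tape.

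The main obstacle, and the place where the hypothesis is used, is guaranteeing that the simulation is \emph{faithful}: nothing local prevents spurious heads from appearing far away, and two heads colliding could let the genuine head effectively disappear and thereby dodge the forbidden accepting state. This is exactly where the special symbol property enters. Since $G$ has that property, $X_{\leq 1}$ is sofic, so I may fix an SFT $Z$ over $G$ and a factor $\psi:Z\twoheadrightarrow X_{\leq 1}$, and require each slice $G\times\{t\}$ to carry in auxiliary layers a $Z$-configuration whose $\psi$-image marks the head positions; translating the finitely many forbidden patterns of $Z$ into each slice forces at most one head per slice. The head at the origin is then a single indestructible particle whose trajectory is exactly the computation of $T$ on the empty input. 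Consequently, if $T$ accepts it reaches a state of $Q_F$ in finitely many steps, which is forbidden, so $X_{\FF}\cap[a]_{(1_G,0)}=\emptyset$; if $T$ does not accept, the configuration consisting of this single non-accepting run (all unreached cells blank, all earlier slices dormant) is a valid point, so the constrained subshift is nonempty. The map is clearly computable, whence $\texttt{OCDP}(G\times\ZZ)$ is $\WP(G)'$-hard.

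For the second bullet, write $A=G\times\ZZ$ and use the Bass--Serre tree-of-sheets structure of $A\ast H$: each left coset $gA$ (a \emph{sheet}) carries an isometric copy of the Cayley graph of $A$, each left coset $gH$ (a \emph{star}) is a finite set of size $|H|$, every element lies in exactly one sheet and one star, and each star meets $|H|$ distinct sheets. Given an instance $(a,\FF)$ of $\texttt{OCDP}(A)$ I would build $\FF'$ over $A\ast H$ that (i) imposes every pattern of $\FF$ inside each sheet, which is legitimate since such a pattern has support in $A$ and hence, after translation, inside a single sheet, and (ii) forbids on every star the finite pattern in which no vertex carries $a$; step (ii) is possible precisely because $H$ is finite. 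If $\texttt{OCDP}(A)$ is empty then, by shift-invariance of $X_{\FF}$, no configuration contains $a$ at all, so no sheet can supply the occurrences of $a$ demanded by the stars and $X_{\FF'}=\emptyset$. Conversely, if some $x^{*}\in X_{\FF}$ carries $a$ at the origin, I would root the tree of sheets and place on each non-root sheet $S$ the translate of $x^{*}$ that sends the origin to the group-vertex joining $S$ to its parent star; since $x^*$ carries $a$ at the origin this puts $a$ at that joining vertex, so every star is covered by one of its child sheets, producing a valid configuration. This computable reduction converts the $\WP(G)'$-hardness of $\texttt{OCDP}(A)$ into the $\WP(G)'$-hardness of $\texttt{DP}(A\ast H)$.
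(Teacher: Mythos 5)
Your second bullet is fine as a piece of mathematics: the generic reduction $\texttt{OCDP}(G\times\ZZ)\le\texttt{DP}((G\times\ZZ)\ast H)$, with $\FF$ imposed inside each sheet and the star rule ``at least one $a$ per $H$-coset,'' together with your tree-rooting argument for nonemptiness, is correct and is a more modular route than the paper's (the paper instead re-embeds its whole machine construction into $(G\times\ZZ)\ast H$ and uses a layer with exactly one $\ast$ per $H$-coset; the content is the same). But your second bullet draws its hardness entirely from the first one, and the first one has a genuine gap.

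The gap is your mechanism for forcing the run to be on the \emph{empty} tape. A ``monotone visited marker'' is not enforceable by local rules in the way you need: the only rule you can write locally is of the form ``visited at time $t+1$ iff visited at time $t$ or the head is there at time $t$,'' and this admits configurations in which some cell is marked visited at \emph{every} $t\in\ZZ$ and carries a fixed non-blank symbol, the mark never having been created by the head; such a configuration is locally indistinguishable from a legitimate one, so no finite set of forbidden patterns excludes it. Consequently, if $T$ accepts the empty input but fails to halt on some other input, your SFT still contains a point of $[a]_{(1_G,0)}$ in which the unique head eventually reads this ``pre-visited'' garbage, diverges from the run on the empty input, and never enters $Q_F$; then $X_{\FF}\cap[a]_{(1_G,0)}\neq\emptyset$ and the equivalence defining your reduction fails. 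What is missing is a \emph{spatial}, not temporal, propagation mechanism anchored at the origin: in the paper the origin symbol carries a mark $\star$ living in an auxiliary SFT layer $X_{\text{aux}}$ in which a $\star$ at a cell forces a $\star$ at each of its $G$-neighbours, so by connectivity of $\Gamma(G,S)$ the whole slice $G\times\{0\}$ gets marked, and every $\star$ is required to co-occur with the blank tape symbol. This forces the entire initial coset to be blank with the single head (guaranteed unique by the special symbol property, as in your argument) at the origin, after which forward determinism of the transition rules makes the simulation faithful. Replacing your visited marker by such a coset-propagating blankness layer repairs the proof, and then both bullets go through.
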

\begin{proof}
	
	Let $G = \langle S \rangle$ and $T$ a $G$-machine with tape alphabet $\Sigma = \{\sqcup,0,1\}$ and transition function $\delta: \Sigma \times Q \to \Sigma \times Q \times S$. Denote the states by $Q = \{1,\dots,k\}$ where the initial and final states are $1$ and $k$ respectively. Finally, let $\ag = \Sigma \times \{0,\dots,k\}$ and $Z = \Sigma^G \times X_{\leq k} \subset \ag^G$ where: $$X_{\leq k} = \{x \in \{0,\dots,k\}^G \mid  0 \notin \{x_g,x_h\} \implies g = h \}.$$
	
	The subshift $Z$ consists on configurations where there is at most one appearance of a state in $Q$. As $G$ satisfies the special symbol property, this is a sofic subshift. 
	
	We are going to define an extended sofic subshift $Y \subset \ag^{G \times \ZZ}$ which simulates the dynamical behavior of $T$. We do this by defining its set of forbidden patterns as $\FF = A_1 \cup A_2 \cup A_3 \cup A_4$ where these four sets are defined as follows: 
	
	\begin{itemize}
		\item Let $F \subset G$. For $p \in \ag^F$ we define its immersion $\gamma(p): \ag^{F} \to \ag^{F \times \{0\}}$ where $\gamma(p)_{(g,0)} = p_g$ for every $g \in F$. We define $A_1$ as the immersion of the forbidden patterns defining $Z$.
		\item Consider the support $F = \{(1_G,0), (1_G,1)\}$. We define $A_2$ as the set of $p \in \ag^F$ such that $p_{(1_G,0)} = (a,0)$ and $p_{(1_G,1)} = (b,\cdot)$ with $b \neq a$.
		\item Let $\delta(a,q)=(b,r,s)$. We define $A_3 = B_1 \cup B_2$ where these sets are the following:
		\begin{itemize}
			\item Let $F = \{(1_G,0), (1_G,1)\}$, we define $B_1$ as the set of $p \in \ag^F$ such that $p_{(1_G,0)} = (a,q)$ and $p_{(1_G,1)} = (c,\cdot)$ with $c \neq b$.
			\item Let $F_s = \{(1_G,0), (s,1)\}$, we define $B_2$ as the set of $p \in \ag^{F_s}$ such that $p_{(1_G,0)} = (a,q)$ and $p_{(s,1)} = (\cdot,t)$ with $t \neq r$.
		\end{itemize}
		\item We define $A_4$ to be the patterns with support $\{(1_G,0)\}$ containing the symbol $(a,k)$ for some $a \in \Sigma$.
	\end{itemize}
	
	This subshift is clearly sofic, as its forbidden patterns are the immersion of the forbidden patterns of a sofic subshift plus a finite amount of new forbidden patterns. The set $A_1$ just forces every coset $(G,z)$ to contain a configuration of~$Z$. Said otherwise, at most one head. $A_2$ forces that whenever a state $0$ appears then the symbol must remain unchanged. $A_3$ is composed of two rules related to the head: the first, $B_1$ forces the symbol in the head position to correspond to the one from the rule $\delta$. $B_2$ forces the movement of the head to correspond to the rule $\delta$. Finally, $A_4$ forbids the appearance of the final state $k$. 
	
	Consider the coding $\rho : \Sigma^G \times G \times Q \to Z$ given by $\rho(x,h,q) = (x,z)$, where $z_{h}=q$ and $z|_{G \setminus \{h\}} \equiv 0$. This coding takes a configuration in the moving tape model and represents it as an element of the subshift $Z$. The rules defining $Y$ force that if $y \in Y$ and $y_{(g,n)} =  \rho(x,h,q)_g$ for all $g \in G$, then for all $m \geq 0$ one has $y_{(g,n+m)} = \rho( T^m(x,h,q) )_g$.
	
	Using this previous relation and the fact that appearances of the final state are forbidden, we obtain that there exists $y \in Y$ such that $\forall g \in G$ then $y_{(g,0)} = \rho(\sqcup^G,h,1)$ for some $h\in G$ if and only if $T$ does not accept the empty input. 
	
	Let $X_{\text{aux}} \subset \{0,\star\}^{G \times \ZZ}$ be the SFT defined by the following forbidden patterns: for every $s\in S$ the pattern $p \in \{0,\ast\}^{\{1_H,s\}}$ such that $p_{1_H} = \star$ but  $p_s \neq \star$ is forbidden. This basically means that if a $\star$ appears in a position, then the whole $G$-coset contains a $\star$.
	
%

Now we have all the elements for the final construction: let $X_{\text{final}} \subset \ag^{G \times \ZZ} \times X_{\text{aux}}$ defined by the following forbidden patterns:
	
	
	\begin{itemize}
		\item The immersion of all forbidden patterns in $Y$.
	\item A $\star$ in $X_{\text{aux}}$ must always be accompanied by $(\sqcup,j)$ for some $j \in \{0,\dots,k\}$.
	\end{itemize}
	
This subshift is again sofic, since its forbidden patterns are the immersion of those of $Y$ and a finite number of forbidden symbols in the alphabet. The role of $X_{\text{aux}}$ is to force a $G$-coset to represent the machine $T$ starting on empty input. We claim that $X_{\text{final}} \cap [((\sqcup,1),\star )]_{1_G} = \emptyset$ if and only if $T$ accepts the empty input.

Indeed, let $x \in X_{\text{final}} \cap [((\sqcup,1),\star )]_{1_G}$. By using the definition of $X_{\text{aux}}$, the local rule of $X_{\text{final}}$ and the characterization of $Y$, we deduce that for all $g \in G \setminus \{1_G\}$ then $x_{(g,0)} = ( (\sqcup,0),\star)$. Therefore the projection $\pi_1$ to the first coordinate of $x$ would satisfy $\pi_1(x)|_{(g,0)} = \rho(\sqcup^G,h,1)_g$. This implies that $\pi_1(x)|_{h(g,k)}= \rho(T^m(\sqcup^G,h,1))_g$ for all $m \geq 0$. As the final state $k$ can not appear, we conclude that $T$ does not accept the empty input. Conversely, if $T$ does not accept the empty input we can construct a valid point as follows: let $y \in Y$ such that $\forall n \geq 0$ $y_{(g,n)} = \rho(T^n(\sqcup^G,1_G,1))$ and $\forall m \leq -1$ $y_{(g,m)} =(\sqcup,0)$. This is a valid point of $Y$ as $T$ does not accept the empty input. We can therefore define $x \in X_{\text{final}}$ as follows:

$$x_{(g,k)} =
\left\{
\begin{array}{l}
(y_{(g,0)},\star) \mbox{, if } k = 0 \\ 
(y_{(g,k)},0) \mbox{, if } k \neq 0\\ 
\end{array}
\right.
$$


%

which satisfies $x \in X_{\text{final}} \cap [((\sqcup,1),\star )]_{1_G}$. 

Note that the previous argument implies that if $((\sqcup,1),\star )$ appears in a configuration, it can only do so in at most one position. Therefore, we can consider a 1-block SFT extension of $X_{\text{final}}$ with at most one preimage $a$ of $((\sqcup,1),\star )$. This is a $G \times \ZZ$ SFT such that $X \cap [a] = \emptyset$ if and only if $T$ accepts the empty input. Therefore $\texttt{OCDP}(G \times \ZZ)$ is at least as hard as the halting problem for $G$-machines which in turn is $\WP(G)'$-hard by Corollary~\ref{hardness}.

Let $H$ be a finite group and consider the subshift $Y_{\text{aux}} \subset \{0,\ast\}^{(G \times \ZZ) \ast H}$ defined by the following forbidden patterns: $p \in \{0,\ast\}^{H}$ such that $|\{h \in H \mid p_h = \ast\}| \neq 1$. This means that every coset of $H$ must contain exactly one appearance of $\ast$. In the following, we choose a configuration $y \in Y_{\text{aux}}$ every $(G \times \ZZ)$-coset contains at most one occurrence of $\ast$. Let $\bar{h} \in H \setminus \{1_H\}$ and $w$ a reduced word representation of an element in $(G \times \ZZ) \ast H$. Define

$$y_{w} =
\left\{
\begin{array}{l}
1 \mbox{, if $w$ ends by } \bar{h} \\ 
0\mbox{, otherwise.} \\ 
\end{array}
\right.
$$

By using $Y_{\text{aux}}$ as an extra SFT layer, we can force the appearance of $((\sqcup,1),\star)$ every time an $\ast$ appears, and immerse the patterns of $X_{\text{final}}$ into $(G \times \ZZ) \ast H$. By definition, each configuration in $Y_{\text{aux}}$ has at least one coordinate marked by an $\ast$, and $y$ has also the property that each $(G \times \ZZ)$-coset contains at most one occurrence of $\ast$. We can thus repeat the previous argument to conclude that $\texttt{DP}( (G \times \ZZ) \ast H)$ is $\WP(G)'$-hard.\end{proof}

The role of the free product with $H$ is to ensure that the machine starts the calculation over an empty tape at some place in every configuration. In the classical construction of Robinson~\cite{Robinson1971} in the plane, this property is obtained using a hierarchical construction. We do not know if a generalization of this construction can be done in general groups.

Notice also that this result does not give new groups with undecidable domino problem when $G$ has at least one non-torsion element. Indeed, if $\ZZ$ embeds into $G$ then $\ZZ^2$ also embeds into $G \times \ZZ$. The advantage of this method using $G$-machines is that it allows to give a result over torsion groups such as the Grigorchuk group~\cite{Grigorchukgrouporiginal1984},

\subsection{Application: A simulation theorem with oracles}
\label{subsection.simulation}

In~\cite{AubrunSablik2010,DBLP:conf/birthday/DurandRS10} it is shown that every effectively closed subshift on $\ZZ$ can be obtained as the projective subdynamics of a sofic subshift on $\ZZ^2$. As Propositions ~\ref{proposition_subdyn_effective} and~\ref{proposition_subdyn_not_stable} show, an analogue can not hold for arbitrary $G$-effectively closed subshifts when $G$ is recursively presented, as the projective subaction would necessarily be effectively closed. Nevertheless, using $G$-machines, we can obtain a similar result if we allow the addition of a particular subshift as an universal oracle to our construction. Formally we show:

\begin{theorem}\label{Teorema_simulacion} For every finitely generated group $G$, there exists a $G \times \ZZ$-effectively closed subshift $U\subset \widetilde{\bg}^{G\times\ZZ}$ such that for every $G$-effectively closed subshift $X \subset \ag^G$ which contains a uniform configuration ($\exists \bar{a} \in \ag$ such that $\bar{a}^G \in X$), there exist an alphabet $\bg$, a finite set of forbidden patterns $\FF$ on alphabet $\widetilde{\bg} \times \bg$ and a 1-block code $\phi$ such that:

$$\pi_G \left(\phi \left( \left( U \times \bg^{G\times \ZZ}\right) \setminus \bigcup_{p \in \FF, h \in G \times \ZZ}[p]_h\right) \right) = X.$$
\end{theorem}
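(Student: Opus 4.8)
The plan is to combine the characterization of $G$-effectively closed subshifts by $G$-machines (Corollary~\ref{the_great_corollary}) with the space-time embedding used in the proof of Theorem~\ref{theorem_group_undecidable_DP}, where the free $\ZZ$-factor plays the role of the time axis of a computation. First I would fix a \emph{universal} $G$-machine $\mathcal{U}$: one which reads from a distinguished region of its tape the description $\langle M\rangle$ of an arbitrary $G$-machine $M$ together with an input, and simulates $M$ on that input. The subshift $U\subset\widetilde{\bg}^{G\times\ZZ}$ would be the subshift of valid space-time diagrams of $\mathcal{U}$: each slice $G\times\{n\}$ encodes, in the moving-tape model, the tape and unique head of $\mathcal{U}$ at time $n$, and the $\ZZ$-direction encodes the transition $t\mapsto t+1$ through local rules analogous to the sets $A_1,\dots,A_3$ of Theorem~\ref{theorem_group_undecidable_DP}. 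Crucially $U$ depends only on $G$ through $\mathcal{U}$, and it \emph{allows every program}, so that the specialization to a particular $X$ is deferred entirely to the finite data $(\FF,\phi)$.

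Given $X$, by Corollary~\ref{the_great_corollary} there is a $G$-machine $M_X$ recognizing a $G$-recursively enumerable set of forbidden patterns defining $X$. I would let $\bg$ carry two components: a copy of $\ag$ holding a candidate configuration $x$ on the slice $G\times\{0\}$, and a component holding the fixed finite string $\langle M_X\rangle$. The universal machine built into $U$ is designed to be a \emph{semi-decider} for the predicate ``$x$ contains some pattern that $M_X$ forbids'': over the infinite time axis it dovetails over all balls $B_k$ of the base slice and all running times, enumerating every finite subpattern of $x$ (reading $x$ from a store layer kept constant along $\ZZ$, exactly as the layers $\mathcal{M}_{\text{STORE}}$ and $\mathcal{M}_{\text{VISIT}}$ in the proof of Theorem~\ref{theorem_oracle_is_G_effective}) and running $M_X$ on each; if some pattern is accepted, $\mathcal{U}$ enters a distinguished error state. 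To verify that $U$ is $G\times\ZZ$-effectively closed I would appeal again to Corollary~\ref{the_great_corollary} and exhibit a $G\times\ZZ$-machine recognizing the constraints defining $U$; the only non-local one is that each slice carries at most one head, which is the $X_{\leq 1}$-type condition and is $G\times\ZZ$-effectively closed (property $8$ of the list in Subsection~\ref{subsection.G_effectiveness}, with $\WP(G\times\ZZ)$ reducing to $\WP(G)$). This is exactly where $G$-effectiveness rather than soficity is used: in Theorem~\ref{theorem_group_undecidable_DP} head-uniqueness was made sofic through the special symbol property, whereas here we are content with $G\times\ZZ$-effectiveness and so impose no hypothesis on $G$.

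The finite forbidden set $\FF$ on $\widetilde{\bg}\times\bg$ does three things: it forbids every value of the description component other than the symbols spelling $\langle M_X\rangle$, pinning the program of $\mathcal{U}$ to $M_X$; it synchronizes the store layer of $U$ with the $\ag$-component of the $\bg$-layer on the slice $G\times\{0\}$, so that $\mathcal{U}$ effectively reads the candidate $x$; and it forbids the error state. The $1$-block code $\phi$ projects onto the $\ag$-component, and $\pi_G$ reads off the slice $G\times\{0\}$. For the inclusion $\pi_G(\cdots)\subseteq X$: if $x\notin X$ then a forbidden pattern occurs in $x$, so by $G$-recursive enumerability and the infinitude of the time axis $\mathcal{U}$ eventually reaches the error state, which is forbidden, and no valid configuration projects to $x$. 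For $\supseteq$: if $x\in X$ then $\mathcal{U}$ never accepts, so the space-time diagram of $\mathcal{U}$ on input $x$ is an infinite valid configuration; the uniform configuration hypothesis $\bar a^G\in X$ is used to fill consistently the negative time slices and any base region not carrying part of $x$, guaranteeing that a complete point of the ambient subshift lies above $x$.

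The main obstacle is making $U$ genuinely universal and independent of $X$ while keeping the specialization data finite: one must lay out the tape of $\mathcal{U}$ so that the program region can be pinned by finitely many forbidden patterns, and one must establish \emph{completeness} of the detection procedure, namely that the dovetailing over balls $B_k$ and running times really exhausts all finite subpatterns of $x$, so that no forbidden pattern escapes detection over infinite time. A secondary, more bookkeeping-heavy point is assembling the store, path-building, ball-visiting, oracle and simulation layers of the previous $G$-machine constructions into a single $G\times\ZZ$ space-time subshift and checking that their conjunction remains $G\times\ZZ$-effectively closed; this is routine given Corollary~\ref{the_great_corollary}. Note finally that such a $U$ cannot be replaced by an ordinary effectively closed subshift, since by Proposition~\ref{proposition_subdyn_not_stable} the $G$-projective subdynamics of a $G\times\ZZ$-effectively closed subshift need not be effectively closed, which is precisely why the free ``oracle'' factor $U$ is indispensable.
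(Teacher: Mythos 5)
Your high-level ingredients match the paper's (Corollary~\ref{the_great_corollary} to extract a $G$-machine for $X$, machine transitions encoded as finitely many forbidden patterns as in Theorem~\ref{theorem_group_undecidable_DP}, dovetailing over balls and run times, forbidding the accepting state, head-uniqueness imposed only $G\times\ZZ$-effectively), but the argument has a genuine gap at its core: nothing in a subshift of space-time diagrams can force the universal machine to exist or to actually run, so soundness fails. The set of valid diagrams of a single-head machine is closed under the limit in which the head escapes to infinity, so the headless configuration --- in which all transition constraints hold vacuously and the store layer is arbitrary --- lies in your $U$; ``at least one head'' is not a closed shift-invariant condition and cannot be imposed by forbidden patterns (for the same reason that $X_{\leq 1}$ necessarily contains the all-$0$ point). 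Since your $\phi$ is an unconditional projection onto the $\ag$-component, every $x\in\ag^G$, including $x\notin X$, is the projection of a headless point. A second instance of the same problem: the time axis is $\ZZ$, not $\NN$, so even diagrams that do carry a head include runs ``from time $-\infty$'' whose state and working tape at a given slice are not reachable from any initial configuration; forward determinism then gives no guarantee that your dovetailing schedule is ever genuinely executed, so the error state need never appear even when $x\notin X$. Your stated ``main obstacle'' (that the dovetailing exhausts all finite subpatterns) is actually the unproblematic part; the missing idea is forcing computation to start at all.

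This is exactly what the paper's construction is built to overcome, and why its $U$ is not a diagram subshift of a universal machine but pure scaffolding independent of any machine: the layers $Y_n$ are Delone sets of head positions (covering radius $\leq 4n$ forces a head within bounded distance of every group element --- a closed condition, unlike head existence --- while packing radius $\geq n$ and the protective balls of $2$'s keep the many simultaneous heads from interacting), and the layer $\widetilde{X}_{\texttt{time}}$, constant on $G$-cosets, forces fresh restarts (blank working tape, initial state at every Delone point) at each $\vartriangleright$, for runs of prescribed length $\texttt{time}(n)$, preceded by shift-words so that every element of $B(1_G,4n)$ eventually hosts a start. The machine depending on $X$ (the dovetailer $\widetilde{T}$) enters only through the finite set $\FF$, as in your proposal. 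Note also where the uniform configuration is really needed: the timing layer may contain no $\star$ at all (this, again, cannot be excluded), and the paper's $1$-block code sends such configurations to $\bar{a}^G$; your unconditional projection has no such escape, and with a single head no $1$-block code could have one, since a lone start marker is invisible from all but finitely many sites of a slice, whereas the paper's $\star$ is visible on an entire $G$-coset. Repairing your proof essentially forces you to replace the single universal head by this Delone-set-plus-restart structure.
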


In order to define $U$ we need to introduce some technical constructions. Let $(X,d)$ be a metric space and $D \subset X$. The \emph{packing radius} of $D$ is  $r_D = \frac{1}{2} \inf{\{ d(x,y) \mid x,y \in D, x \neq y} \}$ and the \emph{covering radius} of $D$ is given by $c_D = \sup{\{ d(x,D) \mid x \in X \}}.$ Notice that for each pair of different $x,y \in D$, we have $B(x,r_D)\cap B(y,r_D) = \emptyset$ and $\bigcup_{x \in D}B(x,c_D) = X$. A set with non-zero packing radius and finite covering radius is said to be Delone. Notice that by definition a Delone subset of a non-empty set must be non-empty.

We define $Y_n \subset \{0,1,2\}^G$ as the subshift defined by the following set of forbidden patterns $\FF_n$:

\begin{itemize}
	\item All $p \in \{0,2\}^{B(1_G,4n)}$.
	\item $p \in \{0,1,2\}^{B(1_G,n)}$ such that $p_{1_G}=1$ and there exists $g \in B(1_G,n)\setminus \{1_G\}$ with $p_g \neq 2$.
	\item $p \in \{1,2\}^F$ where $F$ is a connected component of $\Gamma(G,S)$ and there exist $g_1,g_2 \in F, g_1 \neq g_2$ such that $p_{g_1}=p_{g_2}=1$.
\end{itemize}

That is, $Y_n$ is the set of configurations $y$ where, if we denote the set of positions marked in $y$ by a $1$ by $D_y$, then $D_y$ forms a Delone set with $r_{D_y} \geq n$ and $c_{D_y} \leq 4n$. Also, each $1$ is surrounded by a ball of size at least $n$ marked by $2$'s and there is no path of $2$'s connecting two adjacent $1s$. See Figure~\ref{figure.example_yn} for an example in $\ZZ^2$.

\begin{figure}[h!]
	\centering
	\begin{tikzpicture}[scale=0.25]

\clip[draw,decorate,decoration={random steps, segment length=3pt, amplitude=1pt}] (0.5,0.5) rectangle (39.5,16.5); 

\foreach \y in {0,...,40} {
	\foreach \yy in {0,...,20} {
		\lettre{\y}{\yy}{black!70}
	}
}
\foreach \a/\b in {2/3, 5/8, 8/1,15/2, 23/2, 12/7, 20/7, 26/9, 34/9, 30/3, 36/2, -2/8, 3/14, 9/12, 17/12, 23/14, 30/13, 37/14, 13/16} {
		
\lettre{\a}{\b}{white}
\lettre{\a+1}{\b}{vert}
\lettre{\a-1}{\b}{vert}
\lettre{\a}{\b+1}{vert}
\lettre{\a}{\b-1}{vert}
\lettre{\a+2}{\b}{vert}
\lettre{\a-2}{\b}{vert}
\lettre{\a}{\b+2}{vert}
\lettre{\a}{\b-2}{vert}
\lettre{\a+1}{\b+1}{vert}
\lettre{\a+1}{\b-1}{vert}
\lettre{\a-1}{\b+1}{vert}
\lettre{\a-1}{\b-1}{vert}
}
\lettre{14}{12}{vert}
\lettre{13}{12}{vert}
\lettre{14}{11}{vert}

\lettre{17}{7}{vert}
\lettre{17}{6}{vert}
\lettre{18}{6}{vert}

\lettre{30}{7}{vert}
\end{tikzpicture}
	\caption{Example of a configuration of $Y_2$ for the group $\ZZ^2$ with the canonical generators. The symbols $0,1$ and $2$ are represented by the colors \protect\begin{tikzpicture}
		\protect\draw[fill = black!70] (0,0) rectangle ++(0.3,0.3);
		\protect\end{tikzpicture}, \protect\begin{tikzpicture}
		\protect\draw[fill = white] (0,0) rectangle ++(0.3,0.3);
		\protect\end{tikzpicture} and \protect\begin{tikzpicture}
		\protect\draw[fill = vert] (0,0) rectangle ++(0.3,0.3);
		\protect\end{tikzpicture} respectively.    }
	\label{figure.example_yn}
\end{figure}

%


\begin{claim*}
 $\forall n \geq 1$, $Y_n$ is a non-empty, $G$-effectively closed subshift.
\end{claim*}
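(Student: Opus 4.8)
The plan is to establish the two assertions separately: that $Y_n$ is $G$-effectively closed, and that it is non-empty.

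For effectiveness I would exhibit a set of pattern codings defining $Y_n$ which is recursively enumerable with oracle $\WP(G)$. Recall that a Turing machine equipped with the oracle $\WP(G)$ can reconstruct any ball $B_m$ of $\Gamma(G,S)$: it enumerates the words of $S^*$ of length at most $m$, uses the oracle on $uv^{-1}$ to decide which words represent the same element, and uses it on the tests $us =_G v$ for $s \in S$ to recover the adjacency relation. With this primitive the three families in $\FF_n$ become enumerable: the $\{0,2\}$-labelings of $B(1_G,4n)$ and the relevant labelings of $B(1_G,n)$ form two finite families obtained from the fixed balls $B_{4n}$ and $B_n$, while the patterns of the third type are obtained by enumerating, for growing $m$, the connected subsets $F \subseteq B_m$ (connectivity being decidable from the reconstructed adjacency) together with their $\{1,2\}$-labelings carrying at least two $1$'s. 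Turning each such pattern into a pattern coding yields a set $\CC$, recursively enumerable with oracle $\WP(G)$, with $Y_n = X_{\CC}$; hence $Y_n$ is $G$-effectively closed. Equivalently, one may check directly that $\FF_n$ is $G$-recursively enumerable and invoke Corollary~\ref{the_great_corollary}, the advantage being that a $G$-machine explores the Cayley graph directly and so resolves coincidences of words without explicit oracle calls.

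For non-emptiness I would build an explicit witness. Choose a subset $D \subseteq G$ that is maximal among those whose distinct points lie at distance at least $2n+2$; such a set exists by a greedy construction along an enumeration of $G$ (or by Zorn's lemma). Define $y \in \{0,1,2\}^G$ by $y_g = 1$ if $g \in D$, by $y_g = 2$ if $g \in B(h,n) \setminus \{h\}$ for some (necessarily unique) $h \in D$, and by $y_g = 0$ otherwise; the point $h$ is unambiguous because two points of $D$ lie at distance $>2n$, so the punctured $n$-balls $B(h,n)\setminus\{h\}$ are pairwise disjoint. It then remains to verify that $y$ avoids every pattern of $\FF_n$.

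The verification splits according to the three types, and the third is the delicate one. The second type holds because, for $h \in D$, every $g \in B(h,n)\setminus\{h\}$ satisfies $g \notin D$ (as $d(g,h) \leq n < 2n+2$) and hence $y_g = 2$. The first type holds because maximality of $D$ forces $d(g,D) \leq 2n+1$ for every $g \in G$, so that $B(g,4n)$ meets $D$ (here $2n+1 \leq 4n$ uses $n \geq 1$) and therefore carries a $1$. For the third type I would argue that distinct points of $D$ lie in distinct connected components of the region $\{g : y_g \in \{1,2\}\} = \bigcup_{h \in D} B(h,n)$: if $g_1 \in B(h_1,n)$ and $g_2 \in B(h_2,n)$ with $h_1 \neq h_2$ were adjacent, then $d(h_1,h_2) \leq 2n+1$, contradicting the separation $d(h_1,h_2) \geq 2n+2$. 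Thus no connected $\{1,2\}$-labeled set contains two $1$'s, so $y \in Y_n$. The main obstacle is precisely this last point: it is what dictates the choice of separation constant $2n+2$ (rather than a bare $>2n$ ensuring only disjointness), since the forbidden patterns of the third type rule out not overlapping but merely adjacent $n$-balls.
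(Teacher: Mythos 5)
Your proof is correct and follows essentially the same path as the paper's: effectiveness is handled by noting that a Turing machine with oracle $\WP(G)$ can enumerate the forbidden patterns $\FF_n$, and non-emptiness by taking a maximal separated subset $D \subseteq G$ and defining exactly the same witness configuration ($1$ on $D$, $2$ on the punctured $n$-balls, $0$ elsewhere). The only divergence is cosmetic: the paper obtains its Delone set as an accumulation point of maximal packings of the finite balls $B(1_G,k)$ at separation $4n$ and then passes to a limit by compactness, while you build it directly by a greedy/Zorn argument at the sharp separation $2n+2$; both maximality arguments yield the covering bound needed for the first family of forbidden patterns, and your explicit treatment of the adjacency issue for the third family is the same point the paper invokes (somewhat tersely) via $n \geq 1$.
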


\begin{proof}
	The set $\FF_n$ can easily be recognized by a Turing machine with oracle $\WP(G)$, so $Y_n$ is $G$-effectively closed. For the non-empty part, we claim a Delone set $D$ satisfying $r_{D} \geq 2n$ and $c_{D} \leq 4n$ always exists. Indeed, Consider the restriction to $B(1_G,k)$ for some $k \in \NN$ and choose a maximal set $D_k \subset B(1_G,k)$ with $r_{D_k} \geq 2$. If $c_{D_k} > 4n$ then the set $K := \{g \in B(1_G,k) \mid d(D_k,g)>2n\}$ is not empty and $D_k$ can be extended by an element of $K$, contradicting its maximality. Thus $c_{D_k} \leq 4n$. Now, consider the sequence of indicator functions of $(D_k)_{k \in \NN}$ and choose an accumulation point. This limit is the indicator function of a Delone set $D$ which satisfies the aforementioned property. Now, define $y \in \{0,1,2\}^G$ as:
	$$y_g = \begin{cases}
	1 & \text{ if } g \in D \\
	2 &  \text{ if } 0< d(g,D) \leq n \\
	0 &  \text{ else }
	\end{cases}$$
	As $c_D \geq 2n$ and $n \geq 1$ it follows that there is no path consisting of $2$'s between a pair of $1$'s. It follows that $y \in Y_n$. \end{proof}

Consider a $G$-machine $T$ with alphabet $\Sigma$ and set of states $Q$ whose head never leaves a bounded support $F$. Using a pigeonhole argument, it can be shown that if it accepts, it must do so before $|Q|\cdot|F|\cdot|\Sigma|^{|F|}$ steps. Consider the function $\texttt{time} : \NN \to \NN$ given by $\texttt{time}(n) = n^{n^n+n+1}$. It is clearly a computable function which satisfies the following property: for any $G$-machine $T$, there exists $N \in \NN$ such that for every $n \geq N$, if $T$ accepts a pattern $p$ without leaving the support $B(1_G,n)$ then it does so before $\texttt{time}(n)$ steps. Indeed, we can always bound $B(1_G,n) \leq |S|^n$ and thus an upper bound for the maximum number of steps without leaving the support $B(1_G,n)$ is given by $|Q|\cdot|S|^n\cdot|\Sigma|^{|S|^n}$. Choosing $N \geq \max\{|Q|,|S|,|\Sigma|\}$ we get that $\forall n \geq N$ the number of steps is bounded by $n^{n^n+n+1}$.

We are going to construct a $\ZZ$-subshift $X_{\texttt{time}}$ which encodes the function $\texttt{time}$ and instructions for a Turing machine in a convenient way. Consider the alphabet $\ag_X = \{ \bullet , \star, \oplus, \vartriangleright \} \cup S$. Let $\widetilde{x} \in \ag_X^{\NN}$ be the infinite concatenation of $\{w_n\}_{n \in \NN}$, where $w_0 = \star$ and for $n \geq 1$ the word $w_n$ is defined as follows. Let $u_1,\dots, u_{k(n)}$ be the lexicographic enumeration of all words in $S^*$ of length at most $4n$. Then, 
$$ v_{j,n} = u_{j}\vartriangleright\bullet^{\texttt{time}(n)}u_{j}^{-1}, \text{ and } w_n = \oplus v_{0,n}v_{1,n},\dots,v_{k(n),n}$$

\begin{example}
	Let $S = \{a,a^{-1}\}$ and suppose just for this example that the words are enumerated up to length $n$ instead of $4n$, and that $\texttt{time}(1)=2$ and $\texttt{time}(2) = 3$. Then the first symbols of $\widetilde{x}$ would be:
	\begin{align*}
	\widetilde{x} = & \star \oplus \vartriangleright \bullet \bullet  a \vartriangleright\bullet\bullet a^{-1} a^{-1}\vartriangleright \bullet\bullet a \oplus \vartriangleright\bullet\bullet\bullet  a \vartriangleright \bullet\bullet\bullet a^{-1} a^{-1} \vartriangleright\bullet\bullet\bullet  a  \\ & aa\vartriangleright\bullet\bullet\bullet a^{-1}a^{-1}  aa^{-1}\vartriangleright \bullet\bullet\bullet aa^{-1}a^{-1}a \vartriangleright \bullet\bullet\bullet a^{-1}a a^{-1}a^{-1}\vartriangleright\bullet\bullet\bullet aa \cdots
	\end{align*}
	
\end{example}

With the infinite word $\widetilde{x}$ in hand, we define $X_{\texttt{time}} \subset \ag_X^{\ZZ}$ as the subshift such that if $x \in X$ and $x_n = \star$, then for all $m \geq 0$ we have $x_{n+m} = \widetilde{x}_m$. Clearly the forbidden patterns of $X_{\texttt{time}}$ can be recognized by a Turing machine with oracle $\WP(G)$.

Let $\widetilde{X}_{\texttt{time}} \subset \ag_X^{G \times \ZZ}$ be the periodic extension of ${X}_{\texttt{time}}$. That is, for all $\widetilde{t} \in \widetilde{X}_{\texttt{time}}$ and $g \in G$ we have $\widetilde{t}_{(g,k)} = \widetilde{t}_{(1_G,k)}$ and the configuration $x \in \ag_X^{\ZZ}$ defined by $x_k = \widetilde{t}_{(1_G,k)}$ belongs to $X_{\texttt{time}}$. 

Finally, we define $U \subset \widetilde{X}_{\texttt{time}} \times \{0,1,2\}^{G \times \ZZ}$ by a set of forbidden patterns. In order to describe this set, we denote by $\pi_1$ and $\pi_2$ the projections to the first and second coordinate respectively.

\begin{itemize}
	\item Let $(k_n)_{n \geq 1}$ be the sequence of positions in $\widetilde{x}$ such that $\widetilde{x}_{k_n} = \oplus$. Recall that $\FF_n$ denotes the set of forbidden patterns defining $Y_n$. We forbid all patterns $p$ with support $F \ni (1_G,0)$ such that $\pi_1(p)_{(1_G,0)} = \star$ and for which there is $n \in \NN$ such that the restriction of $\pi_2(p)$ to $F_n = \{ (g,k_n) \mid (g,k_n) \in F \}$ contains a pattern in $\FF_n$.
	\item We forbid all patterns $p$ with support $F = \{(1_G,0), (1_G,1) \}$ such that $\pi_1(p)_{(1_G,1)} \in \{ \vartriangleright, \bullet \}$ and $\pi_2(p)_{(1_G,1)} \neq \pi_2(p)_{(1_G,0)}$.
	\item For $s \in S$, we forbid all patterns with support  $F_s = \{(1_G,0), (s,1) \}$ such that $\pi_1(p)_{(s,1)} = s$ and $\pi_2(p)_{(s,1)} \neq \pi_2(p)_{(1_G,0)}$.
\end{itemize}

In other words, these patterns use the information on the first coordinate to force a structure on the second one as follows: The $n$-th coordinate marked with $\oplus$ after a $\star$ must carry a configuration $y \in Y_n$ in the second coordinate. The symbols $ \vartriangleright$ and $\bullet$ in the layer $(G,m)$ just copy the configuration in the layer $(G,m-1)$. The symbols from $S$ shift the whole configuration by $s \in S$. See Figure~\ref{figure.patate}.

\begin{figure}[h!]
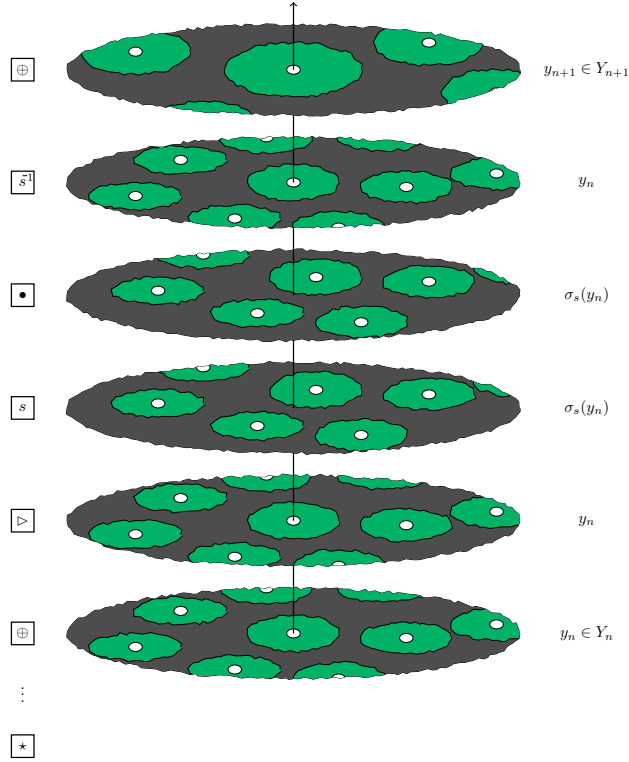

	\centering
	\include{patate}
	\caption{A typical configuration in $U\subseteq \left(\{ \bullet , \star, \oplus, \vartriangleright \} \cup S\right)\times \{ 0,1,2\}^{G\times\ZZ}$. Symbols on the left side of the picture correspond to the first coordinate of the configuration, and the part in $\{ 0,1,2\}^{G\times\ZZ}$ is on the right. On the example, the bottom $\oplus$ is the $n$-th appearence after $\star$.}
	\label{figure.patate}
\end{figure}

\begin{claim*}
	$U$ is a non-empty, $G \times \ZZ$-effectively closed subshift.
\end{claim*}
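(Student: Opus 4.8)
The plan is to verify the two assertions of the claim separately: that $U$ is $G\times\ZZ$-effectively closed, and that $U$ is non-empty. For the effectiveness part I would show that each family of forbidden patterns defining $U$ is recognizable by a Turing machine equipped with an oracle for $\WP(G)$. Note first that a word over the generators of $G\times\ZZ$ represents the identity if and only if its $G$-part is trivial in $G$ and its $\ZZ$-part vanishes, so $\WP(G\times\ZZ)$ and $\WP(G)$ are Turing-equivalent; hence such a machine is exactly what Definition~\ref{definition_G_effectively_closed} asks for a $(G\times\ZZ)$-effectively closed subshift, and one may alternatively invoke Corollary~\ref{the_great_corollary}. The first-coordinate constraints impose membership in $\widetilde X_{\texttt{time}}$, whose forbidden patterns are recognizable with oracle $\WP(G)$: the word $\widetilde x$ is computable (it uses only the computable map $\texttt{time}$ and the lexicographic enumeration of the words of $S^*$ of length at most $4n$), and the oracle serves only to resolve the group coordinates and the constancy along $G$-cosets. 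For the three coupling families, the key observation is that the positions $(k_n)$ of the symbols $\oplus$ in $\widetilde x$ are computable; the first family then asks whether the restriction of the second coordinate to the row at height $k_n$ contains a pattern of $\FF_n$, which is decidable with oracle $\WP(G)$ uniformly in $n$, since by the preceding Claim each $Y_n$ is $G$-effectively closed and $\FF_n$ is built uniformly from balls and connected components of $\Gamma(G,S)$ that the oracle can construct. The last two families are local comparisons of two adjacent cells. Collecting these, the forbidden pattern codings of $U$ form a set recursively enumerable with oracle $\WP(G)$.

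For non-emptiness I would exhibit an explicit configuration. Fix the first coordinate to be constant on $G$-cosets and, along $\ZZ$, equal to the bi-infinite word $\bar x$ with $\bar x_k = \widetilde x_k$ for $k \geq 0$ (so $\bar x_0 = \star$) and $\bar x_k = \bullet$ for $k < 0$; since the only $\star$ sits at height $0$ and the rules of $X_{\texttt{time}}$ only constrain positions to the right of a $\star$, this lies in $\widetilde X_{\texttt{time}}$. Using that each $Y_n$ is non-empty by the Claim, choose $y_n \in Y_n$ for every $n \geq 1$. I then build the second coordinate by induction on the height: set every row of height $\leq 0$ to the constant configuration $0^G$; at the height $k_n$ carrying the $n$-th symbol $\oplus$ place $y_n$; and between consecutive marked rows propagate upward by the deterministic rule prescribed by the first coordinate — a vertical copy when the upper symbol is $\vartriangleright$ or $\bullet$, and the translation dictated by $s$ when it is a generator $s\in S$.

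It then remains to check that this configuration avoids every forbidden pattern. The coupling families $2$ and $3$ hold by construction, since between two successive rows exactly one copy/shift rule is prescribed by the coset-constant first coordinate, so the propagation is well defined and never conflicts. Because $Y_n$ is a subshift, hence translation-invariant, every row obtained from $y_n$ by a sequence of translations again lies in $Y_n$; moreover, as each block $v_{j,n}=u_j\vartriangleright\bullet^{\texttt{time}(n)}u_j^{-1}$ translates first by $u_j$ and then by $u_j^{-1}$, the net translation over a full block is trivial, so the propagation returns to $y_n$, and the choice of $y_{n+1}$ at the next $\oplus$ is free (no rule links the row below an $\oplus$ to the $\oplus$ row). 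Finally, the first family is avoided because at every height $k_n$ the second coordinate is a translate of $y_n \in Y_n$, which contains no occurrence of any pattern of $\FF_n$. I expect the only delicate point — and thus the main obstacle — to be checking this last consistency simultaneously over all reference copies of $\star$ (every cell of height $0$ carries $\star$, the first coordinate being coset-constant) and over all $n$ at once; translation-invariance of each $Y_n$ is precisely what makes this uniform verification go through.
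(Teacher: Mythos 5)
Your proof is correct and follows essentially the same route as the paper: effectiveness is obtained from the computability of the sequence $(k_n)$ together with the uniform $G$-effectiveness of the subshifts $Y_n$ (the remaining coupling patterns being finite/local), and non-emptiness comes from the non-emptiness of each $Y_n$. The only difference is one of detail: the paper dispatches non-emptiness in a single sentence, whereas you spell out the explicit configuration (choosing $y_n\in Y_n$ at height $k_n$ and propagating by the copy/shift rules, with the net translation over each block $u_j\vartriangleright\bullet^{\texttt{time}(n)}u_j^{-1}$ being trivial), which is precisely the construction the paper leaves implicit and reuses later in the proof of Theorem~\ref{Teorema_simulacion}.
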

\begin{proof}
The first set of forbidden patterns is recursively enumerable with oracle $\WP(G)$ as $(k_n)$ is computable and $Y_n$ is $G$-effectively closed (the Turing machine accepting patterns of $Y_n$ can be constructed universally for all $(Y_n)_{n \in \NN}$ such that it receives $n \in \NN$, $p \in \{0,1,2\}^G$ as an input and accepts if $[p] \cap Y_n = \emptyset$). The rest of the forbidden patterns is a finite set, therefore $U$ is a $G \times \ZZ$-effectively closed subshift. It is non-empty as each $Y_n$ is non-empty.
\end{proof}

Now that the description of $U$ is done, we are ready to show Theorem~\ref{Teorema_simulacion}.

\begin{proof}
Let $\ag$ be the alphabet of $X$ and $T$ be the $G$-machine which on entry $p \in \ag^*_G$ accepts if and only if $[p]\cap X = \emptyset$. Using $\mathcal{M}_{\text{visit}}$ from Theorem~\ref{theorem_oracle_is_G_effective} we can construct from $T$ a machine $\widetilde{T}$ working on an infinite configuration whose description is as follows.

The machine $\widetilde{T}$ contains two tapes: a reading tape which is never modified and initially filled with symbols from $\ag$, and a working tape. The machine $\widetilde{T}$ iterates infinitely for $n = 1,2,\dots $ as follows: for $n \in \NN$, the machine iterates in order $k = 1,2,\dots,n$ the following procedure:

\begin{itemize}
	\item Copy the pattern appearing in the reading tape in the support $B(1_G,k)$ around the head to the working tape.
	\item Run $T$ over this pattern $n$ steps. If $T$ accepts at some point, then $\widetilde{T}$ accepts.
	\item Erase everything in the working tape and go back to the starting position.
\end{itemize}

Let $\Sigma \ni \sqcup$ be the alphabet of the working tape of $\widetilde{T}$ and let its set of states be $Q = \{1,\dots,k\}$, where $1$ is the initial state and $k$ the only accepting state. We proceed similarly to Theorem~\ref{theorem_group_undecidable_DP} by modeling this machine as a subshift on $G \times \ZZ$. We define the alphabet $\bg = \ag \times \Sigma \times \{0,\dots,k\}$. Here $\ag$ is the alphabet of $X$, $\Sigma$ is the alphabet of the working tape and $\{0,\dots,k\}$ codes the state of the head of a $G$-machine, $0$ coding the absence of a head. In order to describe the finite set of forbidden patterns we introduce some notation. Recall that $U$ is defined over the alphabet $\{\bullet,\star,\oplus,\vartriangleright \} \times \{0,1,2\}$. Therefore the set of forbidden patterns $\FF$ is defined over the alphabet  $\ag_{\text{Final}}$ where:

$$\ag_{\text{Final}} = \{\bullet,\star,\oplus,\vartriangleright \} \times \{0,1,2\} \times \ag \times \Sigma \times \{0,\dots,k\}.$$

We denote the projection to each of these five coordinates by $\pi_1,\dots,\pi_5$ respectively. The forbidden patterns in $\FF$ belong to four categories: \emph{configuration patterns}, \emph{starting patterns}, \emph{ending patterns} and \emph{transitions patterns}.

The \emph{configuration patterns} force that every $\ZZ$-coset sees the same symbol in the third coordinate. Said otherwise, the third coordinate is invariant under the action of $\ZZ$. To obtain this we forbid all $p$ with support $\{(1_G,0), (1_G,1)\}$ such that $\pi_3(p_{(1_G,0)})\neq \pi_3(p_{(1_G,1)})$.

The \emph{starting patterns} are defined by forbidding symbols in $\ag_{\text{Final}}$ in a way such that every time the symbol $\vartriangleright$ appears in a $G$-coset, then the working tape symbols are empty (that is, marked by $\sqcup$) and all positions marked by $1$ carry a head with the initial state. Formally, we force that all $a \in \ag_{\text{Final}}$ such that $\pi_1(a) = \ \vartriangleright$ must also satisfy $\pi_4(a) = \sqcup$. Furthermore, if $\pi_2(a) = 1$ then $\pi_5(a) = 1$ and if $\pi_2(a) \in \{0,2\}$ then $\pi_5(a)=0$.

The \emph{ending patterns} are described by forbidding the appearance of any symbol containing the accepting state $k$. Formally, every symbol $a \in \ag_{\text{Final}}$ $\pi_5(a) = k$ is forbidden.

The \emph{transition patterns} describe the evolution of $\widetilde{T}$ after a symbol $\vartriangleright$. Each time the symbol $\bullet$ appears it marks that the $G$-machines must execute one step with respect to the previous $G$-coset. The description of these patterns is the same as the one done in Theorem~\ref{theorem_group_undecidable_DP} with one difference. We update the tape according to the transition function of $\widetilde{T}$ only if a head is lying in a position not marked by a $0$ in the second coordinate. If this happens, then the tape does not evolve. 

Finally, we describe the $1$-block code $\phi$. Let $\bar{a} \in \ag$ be a symbol such that $\bar{a}^G \in X$. We define a local function $\Phi : \ag_{\text{Final}} \to \ag$ by:

$$ \Phi(a) = \begin{cases}
\pi_3(a) & \text{ if } \pi_1(a) = \star\\
\bar{a} & \text{ otherwise}
\end{cases}$$
and we set $\phi(x)_{(g,k)} = \Phi(x_{(g,k)})$. 

Let $x \in \ag^G$ be the $G$-projective subdynamics of $\phi(z)$, where $z \in U \times \bg^{G \times \ZZ}$ and avoids all forbidden patterns in $\FF$. By definition of $U$, as $\widetilde{X}_{\texttt{time}}$ is a periodic extension, each $G$-coset of $z$ is either completely marked by $\star$ or does not contain a $\star$ at all. If this last case happens, then $x = \bar{a}^G \in X$. Otherwise $\pi_1(z)_{(g,0)} = \star$ and thus by definition of $U$ we have $\pi_1(z)_{(g,k)} = \widetilde{x}_k$. Suppose $x \notin X$, then there exists a ball $B_n$ and $p \in \ag^{B_n}$ such that $[p] \cap X = \emptyset$. This implies that $T$ accepts the entry $p$ in a finite number of steps $n_{T}$. By definition, $\widetilde{T}$ also accepts all configurations in $[p]$ in a number of steps bounded by a function of $n_T$. Let $B_m$ be a ball such that $\widetilde{T}$ never leaves $B_m$ when working on $[p]$ (one could take for instance $m$ as the bound on the number of steps). Let $N \geq \max\{ |Q|,|S|, |\Sigma|, m  \}$. 
Then we know that $\widetilde{T}$ starting on position $1_G$ would accept an entry in $[p]$ in less than $\texttt{time}(N)$ steps. Consider $k_N$ the position of the $N$-th appearance of~$\otimes$ in $\widetilde{x}$. By definition we know that in the $G$-coset in $k_N$, the second coordinate contains a configuration $y \in \{0,1,2\}^G$ such that $y \in Y_N$. Therefore, there exists $g \in B(1_G,4N)$ such that $y_g =1$. As each word of length smaller or equal to $4N$ appears, then a codification of $g^{-1}$ eventually does. Using the rules of $U$, this means that after this word the next coset is marked by $\vartriangleright$, and the configuration in the second coordinate is $y' =\sigma_{g^{-1}(y)}$ thus $y'_{1_G} = 1$. By definition of $\widetilde{x}$, the next $\texttt{time}(N)$ cosets are marked by $\bullet$ thus simulating $\widetilde{T}$ for that number of steps as long as the head does not see a $0$ in the second coordinate. As there is a ball of size at least $N$ around the identity marked by a symbol $2$, then $\widetilde{T}$ is run for $\texttt{time(N)}$ steps, thus reaching the accepting state $k$ which is forbidden. This contradicts that $x \notin X$.

Conversely, each $x \in X$ can be obtained by constructing a configuration $z$ such that $\pi_3(z)_{(g,k)} = x_g$ and $\pi_1(z)_{(g,0)} = \star$. By definition of $\widetilde{T}$ and similar arguments as above, this configuration can be completed for all $g \in G$ and $k \geq 0$ without producing forbidden patterns. For $k \leq 0$ we can just fill the coordinate $(g,k)$ with the symbol $(\bullet, 0,x_g, \sqcup,0)$ without creating forbidden patterns.\end{proof}

We remark that the condition that $X$ must contain a uniform configuration can easily be replaced by weaker statements. For example, it suffices to contain a periodic configuration or more generally, a $G$-SFT $Y$ such that $Y \subset X$. In the proof above it would suffice to add a $\ZZ$-periodic extension of $Y$ as an extra coordinate and change the definition of the 1-block code $\phi$ such that it projects to this coordinate instead of $\bar{a}$.

Another interesting aspect of this construction is that even if the subshift $U$ is $G \times \ZZ$-effectively closed in general, it can sometimes be forced to be a sofic subshift. For example, if $G = \ZZ^d$ then $X_{\texttt{time}}$ is an effectively closed $\ZZ$-subshift and thus its periodic extension is a sofic $\ZZ^{d+1}$-subshift by~\cite{AubrunSablik2010,DBLP:conf/birthday/DurandRS10}. Also, we remark that in the second coordinate of $U$, it suffices to contain a non-empty subsystem of $Y_n$ in each $G$-coset. For $\ZZ^d$ it is not hard to produce sofic subshifts with those properties. For example, the subshift shown in Figure~\ref{figure.example_rb} in which each horizontal strip contains a periodic configuration which doubles its period when advancing vertically can be easily shown to be sofic and adapted by adding extra symbols to produce a suitable subsystem of the second layer of $U$.

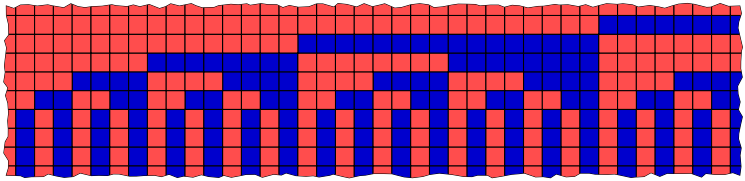
\begin{figure}[h!]
	\centering
	\begin{tikzpicture}[scale=0.25]

\clip[draw,decorate,decoration={random steps, segment length=3pt, amplitude=1pt}] (0.5,2.5) rectangle (39.5,11.5); 

\foreach \yy in {0,...,5} {
\foreach \y in {0,...,20} {
	\lettre{2*\y}{\yy}{rouge}
	\lettre{2*\y+1}{\yy}{bleu}
}
}

	\foreach \y in {0,...,10} {
		\foreach \yy in {0,1} {
		\lettre{4*\y+\yy}{6}{rouge}
		\lettre{4*\y+2+\yy}{6}{bleu}
	}
	}
	\foreach \y in {0,...,5} {
		\foreach \yy in {0,...,3} {
			\lettre{8*\y+\yy}{7}{rouge}
			\lettre{8*\y+4+\yy}{7}{bleu}
		}
	}
	\foreach \y in {0,...,3} {
		\foreach \yy in {0,...,7} {
			\lettre{16*\y+\yy}{8}{rouge}
			\lettre{16*\y+8+\yy}{8}{bleu}
		}
	}
	\foreach \y in {0,1} {
		\foreach \yy in {0,...,15} {
			\lettre{32*\y+\yy}{9}{rouge}
			\lettre{32*\y+16+\yy}{9}{bleu}
		}
	}
		\foreach \yy in {0,...,31} {
			\lettre{\yy}{10}{rouge}
			\lettre{32+\yy}{10}{bleu}
		}
	\foreach \yy in {11,12,13} {
		\foreach \y in {0,...,40} {
			\lettre{\y}{\yy}{rouge}
		}
	}

\end{tikzpicture}
	\caption{A sofic subshift which doubles its period.}
	\label{figure.example_rb}
\end{figure}


\bibliographystyle{plain}

\end{document}